\documentclass[11pt,reqno]{amsart}
\usepackage[a4paper,margin=1in]{geometry}
\usepackage[T1]{fontenc}
\usepackage[utf8]{inputenc}
\usepackage{lmodern}
\usepackage{microtype}
\usepackage{amsmath,amssymb,amsfonts,amsthm,mathtools}
\usepackage{aliascnt}
\usepackage{bm}
\usepackage{enumitem}
\usepackage{xcolor}
\usepackage{tikz}
\usepackage[colorlinks=true,linkcolor=blue,citecolor=blue,urlcolor=blue]{hyperref}
\usepackage[nameinlink,capitalize,noabbrev]{cleveref}
\usepackage{graphicx}

\makeatletter
\@namedef{subjclassname@2020}{\textup{2020} Mathematics Subject Classification}
\makeatother

\numberwithin{equation}{section}
\allowdisplaybreaks

\theoremstyle{plain}
\newtheorem{theorem}{Theorem}[section]

\newaliascnt{proposition}{theorem}
\newtheorem{proposition}[proposition]{Proposition}
\aliascntresetthe{proposition}

\newaliascnt{lemma}{theorem}
\newtheorem{lemma}[lemma]{Lemma}
\aliascntresetthe{lemma}

\newaliascnt{corollary}{theorem}
\newtheorem{corollary}[corollary]{Corollary}
\aliascntresetthe{corollary}

\theoremstyle{definition}
\newaliascnt{assumption}{theorem}
\newtheorem{assumption}[assumption]{Assumption}
\aliascntresetthe{assumption}

\newaliascnt{definition}{theorem}

\aliascntresetthe{definition}

\theoremstyle{remark}
\newaliascnt{remark}{theorem}
\newtheorem{remark}[remark]{Remark}
\aliascntresetthe{remark}

\crefname{assumption}{Assumption}{Assumptions}
\crefname{lemma}{Lemma}{Lemmas}
\crefname{proposition}{Proposition}{Propositions}
\crefname{theorem}{Theorem}{Theorems}
\crefname{corollary}{Corollary}{Corollaries}
\crefname{remark}{Remark}{Remarks}
\crefname{definition}{Definition}{Definitions}
\Crefname{assumption}{Assumption}{Assumptions}
\Crefname{lemma}{Lemma}{Lemmas}
\Crefname{proposition}{Proposition}{Propositions}
\Crefname{theorem}{Theorem}{Theorems}
\Crefname{corollary}{Corollary}{Corollaries}
\Crefname{remark}{Remark}{Remarks}
\Crefname{definition}{Definition}{Definitions}

\newcommand{\R}{\mathbb R}
\newcommand{\E}{\mathbb E}
\newcommand{\Prob}{\mathbb P}
\newcommand{\Var}{\operatorname{Var}}
\newcommand{\diag}{\operatorname{diag}}
\newcommand{\Tr}{\operatorname{Tr}}
\newcommand{\dd}{\,\mathrm d}
\newcommand{\Ai}{\operatorname{Ai}}
\newcommand{\one}{\mathbf 1}
\newcommand{\bxi}{\bm\xi}
\newcommand{\cF}{\mathcal F}
\newcommand{\cG}{\mathcal G}
\newcommand{\eps}{\varepsilon}
\newcommand{\limn}{\lim_{N\to\infty}}
\newcommand{\doi}[1]{\href{https://doi.org/#1}{\nolinkurl{doi:#1}}}
\newcommand{\rev}[1]{\textcolor{red}{#1}}

\title[Moderate Deviations for Deformed GUE]{Moderate Deviations for the Largest Eigenvalue of a Randomly Deformed Gaussian Unitary Ensemble}

\author{Shaochen Wang}
\address[S. Wang]{School of Mathematics, South China University of Technology, Guangzhou, China}
\email{mascwang@scut.edu.cn}

\author{Guangyu Yang}
\address[G. Yang]{School of Mathematics and Statistics, Zhengzhou University, Zhengzhou, China}
\email{guangyu@zzu.edu.cn}

\date{\today}

\keywords{Deformed GUE, Fredholm determinant, Gaussian fluctuations, largest eigenvalue, moderate deviations,
phase transition, saddle geometry, Tracy--Widom law}

\subjclass[2020]{Primary 60B20, 60F10; Secondary 15B52}

\begin{document}

\begin{abstract}
We study moderate deviations for the largest eigenvalue of the randomly deformed Gaussian unitary ensemble introduced by Johansson (Probab. Theory Relat. Fields, {\bf 138}: 75--112, 2007).  In the fixed-coupling regime, the rescaled largest eigenvalue converges to the convolution of the Tracy--Widom law and a Gaussian law arising from the random displacement of the spectral edge.  We derive sharp logarithmic asymptotics for the right and left tails on growing Airy scales and obtain the corresponding phase diagram.  The two tails have different transition scales.  At criticality, the rate functions are nontrivial infimal convolutions of the Tracy--Widom and Gaussian rate functions.  We establish a trace-norm Airy approximation that is uniform over typical diagonal configurations on a logarithmic window.  A conditional convolution argument then combines the resulting tail estimates with the Gaussian moderate deviations of the edge displacement. 
\end{abstract}

\maketitle

\medskip

\setcounter{tocdepth}{1}
\tableofcontents


\section{Introduction}\label{sec:introduction}


Additive deformations of random matrices provide a basic framework for studying how a structured or disordered background changes a random spectrum. Such models are constructed by adding a deterministic or random Hermitian matrix to a standard random matrix. A finite-rank deformation represents a small number of distinguished directions, whereas a full-rank deformation describes an inhomogeneous background. The deformation may shift a regular spectral edge, create outliers, or change the fluctuation law of the extreme eigenvalues. These questions arise frequently in high-dimensional statistics and mathematical physics, where the largest eigenvalue may be governed either by the deformation or by the collective behavior of the random matrix.

Finite-rank and full-rank deformations lead to different edge behaviors.  Finite-rank sources can create outliers and phase transitions at the edge.  This line includes the Baik--Ben Arous--P\'ech\'e transition for spiked sample covariance matrices \cite{BaikBenArousPeche2005}, the corresponding transition for Hermitian additive deformations \cite{Peche2006}, and extensions to other symmetry classes \cite{BloemendalVirag2013}.  For full-rank deformations, the study of the global spectral law goes back to Pastur \cite{Pastur1972}, while fixed-scale edge statistics and edge universality have been established in several settings \cite{Shcherbina2011,CapitainePeche2016,LeeSchnelli2015}.  A prescribed source has a deterministic deformed edge.  When the full-rank source is random, fluctuations of its empirical spectral measure can displace the edge from sample to sample.  This additional source of fluctuation is central to the present work.

Large deviations of extreme eigenvalues form a complementary, macroscopic theory.  They have been studied for rank-one and random finite-rank deformations \cite{Maida2007,BenaychGeorgesGuionnetMaida2012}, prescribed full-rank deformations \cite{McKenna2021}, and sums of random matrices \cite{GuionnetMaida2020}.  A recent work treats the smallest eigenvalue of a deformed Gaussian orthogonal ensemble with an outlier \cite{BoursierGuionnet2026}.  These developments build on large deviations for empirical spectral measures \cite{BenArousGuionnet1997} and asymptotics of spherical integrals \cite{GuionnetMaida2005}.  A replica analysis for deformed Gaussian ensembles was given in \cite{LeDoussal2025}.  Together with the fixed-scale edge results, this literature leaves an intermediate regime in which the Airy argument diverges but remains below the macroscopic scale.  That regime is particularly sensitive to a random full-rank source.

In this paper, we consider the randomly deformed Gaussian unitary ensemble (GUE) studied by Johansson \cite{Johansson2007}.  Let
\begin{equation}\label{eq:model}
 M_N=\diag(\xi_1,\ldots,\xi_N)+\sqrt{2S_N}\,V_N,
 \qquad S_N=\frac{\alpha_N^2}{N^{2/3}},
\end{equation}
where $\xi_1,\ldots,\xi_N$ are independent and identically distributed (i.i.d.) centered random variables with variance $\sigma^2\in(0,\infty)$ and satisfy a Cram\'{e}r condition.  The matrix $V_N$ is an independent GUE matrix with density proportional to $\exp\{-\Tr V_N^2\}\dd V_N$, and $\alpha_N>0$ measures the strength of the GUE coupling at the edge scale.  We write $\lambda_N=\lambda_{\max}(M_N)$.  Up to normalization, the model is a one-time marginal of Dyson Brownian motion \cite{Dyson1962} and a unitary version of the Rosenzweig--Porter ensemble for disordered Hamiltonians \cite{RosenzweigPorter1960}. For fixed $\alpha_N\equiv\alpha>0$, Johansson \cite[Theorem~1.12]{Johansson2007} proved that there is a deterministic centering $R_N^{\mathrm J}\sim2\alpha N^{1/6}$ such that
\begin{equation}\label{eq:Johansson-limit}
 \frac{\sqrt N}{\alpha}\bigl(\lambda_N-R_N^{\mathrm J}\bigr)
\stackrel{d}{\longrightarrow} X_2+G_\alpha.
\end{equation}
Here $X_2$ has the GUE Tracy--Widom distribution $F_2$, and $G_\alpha$ is centered Gaussian with variance $\sigma^2/\alpha^2$, independent of $X_2$.  Throughout, we use the normalization
\begin{equation}\label{eq:F2-definition}
 F_2(t)=\det(I-K_{\Ai})_{L^2(t,\infty)},
 \qquad
 K_{\Ai}(x,y)=\int_0^\infty
 \Ai(x+\lambda)\Ai(y+\lambda)\,\dd\lambda,
\end{equation}
where $\Ai$ is the Airy function.

The convolution in \eqref{eq:Johansson-limit} can be understood as follows.  Conditional on the random diagonal, the GUE perturbation produces a Tracy--Widom fluctuation about the empirical deformed edge.  The random source also displaces that edge; to leading order, the displacement is a normalized sum of the diagonal entries and is therefore Gaussian.  Lee and Schnelli \cite[Theorems~2.8 and~2.11]{LeeSchnelli2015} proved Tracy--Widom universality after centering at the empirical deformed edge for broad classes of deterministic or random diagonal potentials.  For bounded i.i.d. potentials, they also obtained an analogous convolution of the Tracy--Widom and Gaussian laws in their critical fluctuation regime.

The weak convergence in \eqref{eq:Johansson-limit} describes bounded arguments on the Airy scale, but it does not determine tail probabilities when the argument diverges.  The relevant Tracy--Widom tail asymptotics, obtained in the analyses of Tracy and
Widom~\cite{TracyWidom1994}, Baik, Buckingham, and
DiFranco~\cite{BaikBuckinghamDiFranco2008}, and Deift, Its, and
Krasovsky~\cite{DeiftItsKrasovsky2008}, are
\begin{align}
 \log\bigl(1-F_2(x)\bigr)
 &=-\frac43x^{3/2}+O(\log x),
 &&x\to\infty,                                      \label{eq:TW-right-intro}\\
 \log F_2(-x)
 &=-\frac1{12}x^3+O(\log x),
 &&x\to\infty.                                      \label{eq:TW-left-intro}
\end{align}
The right-tail exponent was extended to general Tracy--Widom-$\beta$ laws by Dumaz
and Vir\'ag~\cite{DumazVirag2013}.  Quantitative soft-edge deviation estimates
were obtained by Ledoux and Rider~\cite{LedouxRider2010} for $\beta$-ensembles
and by Erd\H{o}s and Xu~\cite{ErdosXu2023} for Wigner matrices.  Huang,
Landon, and Yau~\cite{HuangLandonYau2020} found a Tracy--Widom--Gaussian
transition for sparse Erd\H{o}s--R\'enyi matrices.  These results show that
soft-edge deviations and Gaussian edge displacements can coexist.

Our aim is to determine the logarithmic right- and left-tail asymptotics when the Airy scale grows and the coupling $\alpha_N$ may vary with $N$.  Let $a_N\to\infty$ denote the moderate scale.  The random edge displacement has a quadratic Gaussian rate.  On the right, the Tracy--Widom exponent is of order $a_N^{3/2}$ while the Gaussian displacement exponent is of order $\alpha_N^2a_N^2$; on the left, the corresponding orders are $a_N^3$ and $\alpha_N^2a_N^2$.  Since the Tracy--Widom tails have different powers, the right and left tails have different transition parameters and critical coupling scales.

Our main result, \Cref{thm:main}, gives the phase diagram for the right and left tails; see \Cref{fig:phase-diagram}.  At the critical scales, the rate functions are infimal convolutions of the Tracy--Widom and Gaussian rate functions.  In particular, for fixed coupling, the right moderate tail has Tracy--Widom asymptotics, whereas the left moderate tail has Gaussian asymptotics. The main technical issue is uniformity over the random diagonal: its empirical resolvent determines both the conditional kernel and the conditional edge.  For the left tail, probabilities can be as small as $\exp\{-c a_N^3\}$, so the approximation error must be negligible at that scale.  After a superexponentially accurate truncation, a resolvent expansion yields the Gaussian MDP for the edge displacement.  A uniform steepest-descent analysis of Johansson's kernel gives a trace-norm Airy approximation and conditional Tracy--Widom tail estimates on a logarithmic window.  A conditional convolution argument then combines the two estimates.

The rest of the paper is organized as follows.
\Cref{sec:main} states the assumptions, the deterministic centering, and the main theorem.  \Cref{sec:truncation,sec:shift} treat the truncation, the population saddle, and the Gaussian MDP for the edge displacement.  \Cref{sec:kernel,sec:airy,sec:conditional-tails} establish the conditional Airy approximation and tail estimates.  \Cref{sec:phase-proof} proves the phase diagram, and \Cref{sec:conclude-rmk} gives concluding remarks.  The proof of \Cref{lem:shifted-contours} is deferred to \Cref{app:shifted-contours}.


\section{Main results}\label{sec:main}


Throughout the paper, all logarithms are natural.  The symbols $c$ and $C$ denote positive constants whose values may change from line to line; their dependence on fixed model parameters is suppressed.  All asymptotic statements are understood as $N\to\infty$, and $\one_E$ denotes the indicator of an event $E$.  If $\upsilon_N\to\infty$, we say that events $E_N$ are {\it superexponentially} small at speed $\upsilon_N$ when
\[
 \limsup_{N\to\infty}\frac1{\upsilon_N}\log\Prob(E_N)=-\infty.
\]

We first impose three standing assumptions.


\subsection{Assumptions and centering}


\begin{assumption}\label{ass:xi}
The variables $\xi_1,\xi_2,\ldots$ are i.i.d., independent of $V_N$, and satisfy
\[
 \E\xi_1=0,
 \qquad
 \Var(\xi_1)=\sigma^2\in(0,\infty).
\]
There exists $\kappa>0$ such that
\begin{equation}\label{eq:exp-moment}
 \E e^{\kappa|\xi_1|}<\infty.
\end{equation}
\end{assumption}

This Cram\'er condition provides the uniform moment generating function expansion needed for the Gaussian moderate deviations and makes the truncation error superexponentially small.

\begin{assumption}\label{ass:alpha}
The positive sequence $\alpha_N$ satisfies
\begin{equation}\label{eq:alpha-subpoly}
 \frac{|\log\alpha_N|}{\log N}\longrightarrow0.
\end{equation}
\end{assumption}

Thus $\alpha_N=N^{o(1)}$ and $\alpha_N^{-1}=N^{o(1)}$.  The coupling may vanish, remain bounded away from zero and infinity, or diverge, but it cannot vary at a fixed polynomial rate.

\begin{assumption}\label{ass:scale}
The moderate scale $a_N$ satisfies
\begin{equation}\label{eq:scale-assumptions}
 a_N\to\infty,
 \qquad
 a_N^3=o(\log N),
 \qquad
 \alpha_Na_N\to\infty.
\end{equation}
\end{assumption}

The last condition ensures that the random edge displacement is observed on a moderate deviation scale.  Together with \eqref{eq:alpha-subpoly}, these assumptions imply $\alpha_Na_N=N^{o(1)}=o(\sqrt N)$.

\medskip

We now define the deterministic edge used throughout the paper.  Fix
\begin{equation}\label{eq:gamma-delta}
 \gamma_0=\frac1{24},
 \qquad
 0<\delta<\frac1{100},
 \qquad
 \tau_N=N^{\gamma_0}.
\end{equation}
Let $\mu$ denote the law of $\xi_1$, and set $p_N=\mu([-\tau_N,\tau_N])$.  Since $p_N\to1$, there is an $N_0$ such that $p_N>0$ for every $N\ge N_0$.  All constructions below are made for $N\ge N_0$; the finitely many earlier rows are irrelevant to the asymptotic statements.  For every Borel set $B\subset\R$, define the conditioned truncation of $\mu$ by
\begin{equation}\label{eq:truncated-law}
 \mu_N(B)=
 \frac{\mu(B\cap[-\tau_N,\tau_N])}
      {p_N}.
\end{equation}
Define
\begin{equation}\label{eq:GN}
 G_N(z)=\int\frac{1}{z-t}\,\dd\mu_N(t),
 \qquad z\in\mathbb{C}\setminus[-\tau_N,\tau_N],
\end{equation}
and
\begin{equation}\label{eq:AN}
 A_N=\alpha_NN^{1/6}.
\end{equation}
Since $A_N/\tau_N=N^{1/8+o(1)}$, \Cref{lem:population-saddle} shows that, for all sufficiently large $N$, there is a unique $w_N>2\tau_N$ satisfying
\begin{equation}\label{eq:population-saddle}
 \int\frac{1}{(w_N-t)^2}\,\dd\mu_N(t)
 =\frac1{A_N^2}.
\end{equation}
Set
\begin{equation}\label{eq:RN}
 R_N=w_N+A_N^2G_N(w_N).
\end{equation}
Then $w_N\sim A_N$ and $R_N\sim2A_N$.  Recalling that $\lambda_N=\lambda_{\max}(M_N)$, define
\begin{equation}\label{eq:ZN}
 Z_N=\frac{\sqrt N}{\alpha_N}(\lambda_N-R_N).
\end{equation}
The centering $R_N$ is deterministic and depends only on the law of $\xi_1$, on $\alpha_N$, and on $N$.  Its definition through the conditioned law $\mu_N$ is part of the superexponentially accurate truncation scheme.

Whenever a full MDP is invoked below, it is understood in the standard open-set/closed-set sense, and a rate function is called good when its sublevel sets are compact.  Because the right and left tails generally have different speeds, the main theorem states two families of one-sided moderate deviation asymptotics.


\subsection{Phase diagram and main theorem}


For $\theta\in(0,\infty)$ and $x>0$, define
\begin{align}
 J_+^{(\theta)}(x)
 &=\inf_{0\le u\le x}
 \left\{\frac43u^{3/2}
 +\frac{\theta(x-u)^2}{2\sigma^2}\right\},
 \label{eq:Jplus}\\
 J_-^{(\theta)}(x)
 &=\inf_{0\le u\le x}
 \left\{\frac1{12}u^3
 +\frac{\theta(x-u)^2}{2\sigma^2}\right\}.
 \label{eq:Jminus}
\end{align}
Each objective is strictly convex on $(0,x)$.  Its derivative is negative at $u=0$ and positive at $u=x$, so the minimizer is unique and belongs to $(0,x)$.  Solving the Euler equations gives
\begin{align}
 u_+^*(x,\theta)
 &=\left(
 \frac{\sqrt{\sigma^4+\theta^2x}-\sigma^2}{\theta}
 \right)^2,
 \label{eq:uplus-star}\\
 u_-^*(x,\theta)
 &=\frac{2}{\sigma^2}
 \left(\sqrt{\theta^2+\theta\sigma^2x}-\theta\right).
 \label{eq:uminus-star}
\end{align}
These formulas also show directly that $0<u_+^*(x,\theta),u_-^*(x,\theta)<x$.  The two transition parameters are
\begin{equation}\label{eq:theta-parameters}
 \theta_N^+=\alpha_N^2a_N^{1/2},
 \qquad
 \theta_N^-=\frac{\alpha_N^2}{a_N}.
\end{equation}
For each fixed $x>0$, the critical rate functions connect continuously to the pure regimes:
\begin{align*}
 \lim_{\theta\to\infty}J_+^{(\theta)}(x)&=\frac43x^{3/2},&
 \lim_{\theta\to\infty}J_-^{(\theta)}(x)&=\frac1{12}x^3,\\
 \lim_{\theta\downarrow0}\frac{J_+^{(\theta)}(x)}{\theta}
 &=\frac{x^2}{2\sigma^2},&
 \lim_{\theta\downarrow0}\frac{J_-^{(\theta)}(x)}{\theta}
 &=\frac{x^2}{2\sigma^2}.
\end{align*}
Here $\theta_N^+$ compares the Gaussian speed with the right Tracy--Widom speed, whereas $\theta_N^-$ makes the corresponding comparison on the left.  The resulting phase diagram is shown in \Cref{fig:phase-diagram}.

\begin{figure}[t]
\centering
\begin{tikzpicture}[x=1.08cm,y=1cm]
  \def\tone{2.6}   
  \def\tfix{5.0}   
  \def\ttwo{7.4}   
  \def\xb{0.35}    

  \node[anchor=east,font=\small] at (-0.15,1.5) {right tail};
  \fill[orange!20] (0,1.2) rectangle (\tone-\xb,1.8);
  \fill[purple!22] (\tone-\xb,1.2) rectangle (\tone+\xb,1.8);
  \fill[blue!18] (\tone+\xb,1.2) rectangle (10,1.8);
  \node[font=\small] at (0.95,1.5) {Gaussian};
  \node[font=\small] at (\tone,1.5) {crossover};
  \node[font=\small] at (8.0,1.5) {Tracy--Widom};

  \node[anchor=east,font=\small] at (-0.15,0.3) {left tail};
  \fill[orange!20] (0,-0.0) rectangle (\ttwo-\xb,0.6);
  \fill[purple!22] (\ttwo-\xb,0.0) rectangle (\ttwo+\xb,0.6);
  \fill[blue!18] (\ttwo+\xb,0.0) rectangle (10,0.6);
  \node[font=\small] at (3.4,0.3) {Gaussian};
  \node[font=\small] at (\ttwo,0.3) {crossover};
  \node[font=\small] at (9.3,0.3) {Tracy--Widom};

  \draw[dashed,gray] (\tfix,-0.28) -- (\tfix,1.8);

  \draw[->] (0,-0.42) -- (10.35,-0.42) node[right,font=\small] {$\alpha_N$};
  \draw (\tone,-0.49) -- (\tone,-0.35);
  \node[below,font=\small] at (\tone,-0.50) {$\alpha_N\asymp a_N^{-1/4}$};
  \draw (\tfix,-0.49) -- (\tfix,-0.35);
  \node[below,font=\small,align=center] at (\tfix,-0.50)
    {$\alpha_N\asymp1$\\[-2pt]{\footnotesize(fixed coupling)}};
  \draw (\ttwo,-0.49) -- (\ttwo,-0.35);
  \node[below,font=\small] at (\ttwo,-0.50) {$\alpha_N\asymp a_N^{1/2}$};
  \node[below left,font=\footnotesize,align=right] at (0,-0.46)
    {$\alpha_Na_N\to\infty$\\ (assumed)};
  \node[below right,font=\footnotesize] at (10,-0.46) {$\alpha_N\to\infty$ allowed};
\end{tikzpicture}
\caption{\scriptsize The phase diagram on a common (logarithmic) coupling axis.  The right transition occurs when $\alpha_N\asymp a_N^{-1/4}$, whereas the left transition occurs at the much larger scale $\alpha_N\asymp a_N^{1/2}$.  Between the two thresholds the right tail is already Tracy--Widom while the left tail is still Gaussian; this intermediate band contains every fixed coupling $\alpha_N\equiv\alpha$ (dashed line), which gives \Cref{cor:fixed-alpha}.  The Gaussian regimes have speed $\alpha_N^2a_N^2$, the Tracy--Widom regimes have speeds $a_N^{3/2}$ on the right and $a_N^3$ on the left, and the crossover bands are the windows $\theta_N^+\asymp1$ and $\theta_N^-\asymp1$, with critical rate functions $J_+^{(\theta)}$ and $J_-^{(\theta)}$.}
\label{fig:phase-diagram}
\end{figure}

\begin{theorem}\label{thm:main}
Suppose \Cref{ass:xi,ass:alpha,ass:scale} hold.  Then, for every fixed $x>0$, the following limits hold.

\medskip
\noindent\textbf{Right tail.}
\begin{enumerate}[label=\textup{(R\arabic*)},leftmargin=3.2em]
\item If $\theta_N^+\to\infty$, then
\begin{equation}\label{eq:R1}
 \limn\frac1{a_N^{3/2}}
 \log\Prob(Z_N>a_Nx)=-\frac43x^{3/2}.
\end{equation}

\item If $\theta_N^+\to\theta\in(0,\infty)$, then
\begin{equation}\label{eq:R2}
 \limn\frac1{a_N^{3/2}}
 \log\Prob(Z_N>a_Nx)=-J_+^{(\theta)}(x).
\end{equation}

\item If $\theta_N^+\to0$, then
\begin{equation}\label{eq:R3}
 \limn\frac1{\alpha_N^2a_N^2}
 \log\Prob(Z_N>a_Nx)=-\frac{x^2}{2\sigma^2}.
\end{equation}
\end{enumerate}

\medskip
\noindent\textbf{Left tail.}
\begin{enumerate}[label=\textup{(L\arabic*)},leftmargin=3.2em]
\item If $\theta_N^-\to\infty$, then
\begin{equation}\label{eq:L1}
 \limn\frac1{a_N^3}
 \log\Prob(Z_N\le-a_Nx)=-\frac1{12}x^3.
\end{equation}

\item If $\theta_N^-\to\theta\in(0,\infty)$, then
\begin{equation}\label{eq:L2}
 \limn\frac1{a_N^3}
 \log\Prob(Z_N\le-a_Nx)=-J_-^{(\theta)}(x).
\end{equation}

\item If $\theta_N^-\to0$, then
\begin{equation}\label{eq:L3}
 \limn\frac1{\alpha_N^2a_N^2}
 \log\Prob(Z_N\le-a_Nx)=-\frac{x^2}{2\sigma^2}.
\end{equation}
\end{enumerate}
\end{theorem}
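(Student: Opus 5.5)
The plan is to write $Z_N$ as a sum of a conditionally Tracy--Widom part and a Gaussian edge shift, and then to combine the two tail estimates by a conditional convolution argument.

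\textbf{Decomposition.} Conditionally on $\bxi=(\xi_1,\ldots,\xi_N)$, let $R_N(\bxi)$ be the empirical deformed edge. It is defined as in \eqref{eq:RN}, with $\mu_N$ replaced by the empirical measure of the truncated diagonal and with its own saddle $w_N(\bxi)$. Set
\[
 Y_N=\frac{\sqrt N}{\alpha_N}\bigl(R_N(\bxi)-R_N\bigr),
 \qquad
 Z_N=X_N+Y_N,
 \qquad
 X_N=\frac{\sqrt N}{\alpha_N}\bigl(\lambda_N-R_N(\bxi)\bigr).
\]

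\textbf{Step 1: truncation and the shift.} The truncation to $|\xi_i|\le\tau_N$ costs only a superexponentially small error at every speed used here, namely $a_N^3$ and $\alpha_N^2a_N^2$, both of which are $N^{o(1)}$. On the truncated event I expand the empirical saddle equation around $w_N$ using the resolvent. This gives $Y_N=\alpha_N^{-1}N^{-1/2}\sum_i(\xi_i-\E_{\mu_N}\xi)\,(1+o(1))$ plus an error that is superexponentially negligible. Since the Stieltjes transform is evaluated at $w_N\sim A_N\gg\tau_N$, the leading term is $A_N^2$ times a centered sum, divided by $w_N\sim A_N$, and after normalization this is the sample mean. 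The Cram\'er condition \eqref{eq:exp-moment} then gives the Gaussian MDP
\[
 \frac1{\alpha_N^2a_N^2}\log\Prob(\pm Y_N>a_Ny)\to-\frac{y^2}{2\sigma^2}.
\]
The relevant scale is $\sqrt N\,\alpha_Na_N$ with $\alpha_Na_N=N^{o(1)}$, so we are in the MDP range.

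\textbf{Step 2: conditional tails.} On a typical set $\Omega_N$ of diagonals, whose complement is superexponentially small, I use Johansson's double-contour kernel. The conditional law of $\lambda_N$ is then a Fredholm determinant. Steepest descent, uniform over $\bxi\in\Omega_N$ and over arguments $s\in[0,Ca_N]$, shows that the rescaled kernel is within trace norm $N^{-c}$ of $K_{\Ai}$ on $L^2(s,\infty)$. This is where the logarithmic window $a_N^3=o(\log N)$ is used. From this I get, uniformly on $\Omega_N$,
\[
 \log\Prob(X_N>a_Nu\mid\bxi)=-\tfrac43(a_Nu)^{3/2}(1+o(1)).
\]
For the left tail I need more care. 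The error $N^{-c}$ must be compared with $F_2(-a_Nu)\approx e^{-a_N^3u^3/12}$. Since $a_N^3=o(\log N)$, this ratio is still $e^{o(\log N)}\gg N^{-c}$, so
\[
 \log\Prob(X_N\le-a_Nu\mid\bxi)=-\tfrac1{12}(a_Nu)^3(1+o(1)).
\]
The arguments beyond $Ca_N$ are handled by monotonicity together with crude bounds. I expect this step to be the main obstacle. It needs uniform control of the saddle geometry and of the contour shifts (presumably \Cref{lem:shifted-contours}) for random diagonals, together with trace-norm bounds that hold on the whole half-line and not just locally.

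\textbf{Step 3: conditional convolution.} For the lower bound I pick a minimizer $u$ from \eqref{eq:Jplus} or \eqref{eq:Jminus} and use
\[
 \{Z_N>a_Nx\}\supset\{X_N>a_Nu\}\cap\{Y_N>a_N(x-u+\eps)\}.
\]
The conditional independence given $\bxi$, together with the uniformity on $\Omega_N$, then gives the product bound. For the upper bound I discretize $u$ on a grid $0=u_0<\dots<u_k=x$ and write
\[
 \Prob(Z_N>a_Nx)\le\sum_j\E\bigl[\Prob(X_N>a_Nu_j\mid\bxi)\,\one_{Y_N\in a_N(x-u_{j+1},x-u_j]}\bigr]+\Prob(Y_N>a_Nx)+\Prob(\Omega_N^c).
\]
I also use the fact that $\Prob(X_N>-a_N\eps)$ is bounded below. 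Taking logarithms and applying the Laplace principle, the speeds are $a_N^{3/2}$ and $\alpha_N^2a_N^2=\theta_N^+a_N^{3/2}$. The limit is therefore the infimum over $u$ of $\frac43u^{3/2}+\theta(x-u)^2/2\sigma^2$, which gives (R2). The regimes $\theta\to\infty$ and $\theta\to0$ give (R1) and (R3), because there one term dominates and the infimum is taken at the corresponding endpoint, with the reversed decomposition on the Gaussian side. The left tail is treated in the same way with speeds $a_N^3$ and $\alpha_N^2a_N^2=\theta_N^-a_N^3$. Refining the grid and letting $\eps\to0$ completes the argument.
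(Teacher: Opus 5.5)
Your proposal is correct in outline and has the same architecture as the paper: superexponential truncation, a Gaussian MDP for the edge shift by resolvent linearization, a trace-norm Airy approximation uniform over typical diagonals on a logarithmic window with a relative comparison to the $F_2$ tails, and a conditional convolution obtained by discretizing the shift. Your decomposition does use a different random centering, and this changes where the work falls.

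\textbf{Your centering versus the paper's.} You center at the empirical edge $R_N(\bxi)$, with its random saddle $w_N(\bxi)$. The paper centers at $v_N(\bxi)=w_N+S_N\sum_j(w_N-\xi_j)^{-1}$, which is the same edge functional evaluated at the deterministic saddle $w_N$.

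The paper's choice keeps the saddle and the contours deterministic and makes $s_N$ an explicit sum of i.i.d.\ bounded terms. Hence \Cref{prop:shift-mdp} is a direct expansion plus Bernstein. The price is that only $f_N'(w_N)$ vanishes, so the random coefficient $\ell_N^2f_N''(w_N)$ must be controlled. That is what the six-moment event $\cG_N$ and the sixth-order expansion in \Cref{lem:cubic-expansion} are for.

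Your choice produces an exact double critical point. Since $w_N(\bxi)\in[A_N-\tau_N,A_N+\tau_N]$ and $N\ell_N^3=A_N^3$, the cubic coefficient is $\frac13(1+O(\tau_N/A_N))$ deterministically. So your conditional Airy estimates would hold over all truncated configurations.

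\textbf{Where the fluctuation reappears.} The same fluctuation shows up in your Step~1, but only quadratically. Since $R_N(\bxi)=\min_w\{w+S_N\sum_j(w-\xi_j)^{-1}\}$, one has $0\le v_N(\bxi)-R_N(\bxi)\le CA_N^{-1}|w_N(\bxi)-w_N|^2$. The deterministic bound $|w_N(\bxi)-w_N|\le2\tau_N$ is far too weak here. On $\cG_N$, however, $|w_N(\bxi)-w_N|=O(N^{-1/2+\delta})$, so $|Y_N-s_N|\le N^{-2/3+2\delta+o(1)}$ and your MDP reduces to the paper's. This second-order step is hidden in your ``$(1+o(1))$'' and should be written out.

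\textbf{Bookkeeping to repair.}
\begin{enumerate}[label=\textup{(\roman*)}]
\item The Airy window must be $[-Ca_N,\infty)$, not $[0,Ca_N]$. On it, both the trace-norm error and the trace norms entering the determinant continuity bound carry a factor $e^{Ca_N^{3/2}}=N^{o(1)}$, as in \Cref{prop:trace-comparison}.
\item Your upper-bound grid omits the cell $Y_N\le0$. This is harmless, since that cell costs $\frac43x^{3/2}\ge J_+^{(\theta)}(x)$; the same omission occurs on the left.
\item In (R1) and (L1) the slack must sit on $X_N$, e.g.\ $X_N>a_N(x+\eps)$ together with $Y_N>-\eps a_N$. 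Otherwise the bound fails, because $\Prob(Y_N>\eps a_N)$ is superexponentially small at speed $a_N^{3/2}$.
\item At the Gaussian endpoints, the typicality of $X_N$ must be the conditional bound $\Prob(X_N>-\eps a_N\mid\bxi)\ge\frac12$, uniform on $\Omega_N$. An unconditional bound does not suffice, because $X_N$ and $Y_N$ are dependent through $\bxi$.
\end{enumerate}
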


In the critical rate functions, the variable $u$ represents the part of the deviation carried by the conditional Tracy--Widom fluctuation, while $x-u$ is carried by the random displacement of the edge.  The infimum selects the most likely division between these two sources of fluctuation.  The explicit formulas \eqref{eq:uplus-star} and \eqref{eq:uminus-star} show that this division is interior in the crossover regime.

The fixed-coupling case illustrates the asymmetry most directly.

\begin{corollary}\label{cor:fixed-alpha}
Under the assumptions of \Cref{thm:main}, if $\alpha_N\equiv\alpha\in(0,\infty)$, then, for every fixed $x>0$,
\begin{align}
 \limn\frac1{a_N^{3/2}}
 \log\Prob(Z_N>a_Nx)
 &=-\frac43x^{3/2},                                      \label{eq:fixed-right}\\
 \limn\frac1{a_N^2}
 \log\Prob(Z_N\le-a_Nx)
 &=-\frac{\alpha^2x^2}{2\sigma^2}.                      \label{eq:fixed-left}
\end{align}
\end{corollary}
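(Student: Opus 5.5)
The corollary follows from \Cref{thm:main} once we place a fixed coupling $\alpha_N\equiv\alpha\in(0,\infty)$ in the phase diagram. First check the hypotheses. \Cref{ass:alpha} holds trivially, since $|\log\alpha|/\log N\to0$. In \Cref{ass:scale}, the condition $\alpha_Na_N\to\infty$ reduces to $a_N\to\infty$, and the other two conditions are imposed on $a_N$ directly.

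Next compute the two transition parameters in \eqref{eq:theta-parameters}: $\theta_N^+=\alpha^2a_N^{1/2}\to\infty$ and $\theta_N^-=\alpha^2/a_N\to0$, both because $a_N\to\infty$. So a fixed coupling lies strictly between the two thresholds, as the dashed line in \Cref{fig:phase-diagram} indicates.

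For the right tail, $\theta_N^+\to\infty$ puts us in case (R1). Then \eqref{eq:R1} gives exactly \eqref{eq:fixed-right}, with speed $a_N^{3/2}$ and rate $\frac43x^{3/2}$.

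For the left tail, $\theta_N^-\to0$ puts us in case (L3). Then \eqref{eq:L3} gives
\[
 \limn\frac1{\alpha^2a_N^2}\log\Prob(Z_N\le-a_Nx)=-\frac{x^2}{2\sigma^2}.
\]
Multiplying by the fixed positive constant $\alpha^2$ turns the speed $\alpha^2a_N^2$ into $a_N^2$ and the rate into $\alpha^2x^2/(2\sigma^2)$, which is \eqref{eq:fixed-left}.

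There is no real obstacle. The only care needed is to use the strict limits $\theta_N^\pm\to\infty$ and $\theta_N^\pm\to0$ to select the pure regimes, rather than the critical windows. All the substantive work is contained in \Cref{thm:main}.
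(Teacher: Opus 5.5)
Your proposal is correct and is essentially the paper's proof: $\theta_N^+=\alpha^2a_N^{1/2}\to\infty$ gives (R1), and $\theta_N^-=\alpha^2/a_N\to0$ gives (L3), after which you rescale by the constant $\alpha^2$. Your extra check that a fixed coupling satisfies \Cref{ass:alpha,ass:scale} is harmless, since the corollary already assumes the hypotheses of \Cref{thm:main}.
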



\begin{remark}\label{rmk:phases}
The thresholds in \eqref{eq:theta-parameters} quantify the asymmetry seen in \Cref{fig:phase-diagram}: a coupling that tends to zero may still yield a Tracy--Widom right tail, and a coupling that tends to infinity may still yield a Gaussian left tail when it grows more slowly than $a_N^{1/2}$.
\end{remark}

\begin{remark}\label{rem:log-window-statement}
The restriction $a_N^3=o(\log N)$ is dictated by the left tail.  Indeed, the proof compares the finite-$N$ conditional Fredholm determinant with $F_2(-a_Nx)$, which is of order $\exp\{-a_N^3x^3/12+O(\log a_N)\}$ for fixed $x>0$.  A polynomially small relative determinant error is therefore negligible only on a {\it logarithmic Airy window}---that is, a spectral interval of size $O((\log N)^{1/3})$.  Extending the theorem to a polynomial window would require an exponentially accurate finite-$N$ expansion; see \Cref{rem:log-window}.
\end{remark}


\section{Truncation and the deterministic edge}\label{sec:truncation}


The centering in \eqref{eq:RN} is defined through the conditioned law
$\mu_N$, not through the original law $\mu$.  We first justify this
replacement at the level of the largest eigenvalue.  The purpose of the
truncation is twofold: it gives a deterministic support bound for every
later contour argument, and it rules out poles close to the population
saddle.  The truncated variables have a superpolynomially small first
moment, which is retained exactly in the expansion of \Cref{sec:shift}.

Recall from \eqref{eq:gamma-delta} that
$\tau_N=N^{\gamma_0}$ with $\gamma_0=1/24$.  This choice is convenient:
besides $\tau_N=o(A_N)$ and domination of the $N^{o(1)}$
moderate deviation speeds, the estimates below require
$6\gamma_0<1/2-\delta$ in \Cref{lem:good-event} and $7\gamma_0<1/2$ in
\Cref{prop:shift-mdp}.  With $0<\delta<1/100$, every fixed exponent
$0<\gamma_0<1/14$ satisfies these conditions; the choice $\gamma_0=1/24$
lies in this range.


\subsection{A superexponentially accurate truncation coupling}


Define
\begin{equation}\label{eq:tail-probability}
 q_N=\mu\bigl(\{t:|t|>\tau_N\}\bigr),
 \qquad
 p_N=1-q_N.
\end{equation}
By \eqref{eq:truncated-law}, the conditioned law $\mu_N$ is supported on
$[-\tau_N,\tau_N]$.  The exponential moment assumption
\eqref{eq:exp-moment} and Markov's inequality give
\begin{equation}\label{eq:xi-tail-bound}
 q_N\le \E e^{\kappa|\xi_1|}\,e^{-\kappa\tau_N}
 =O(e^{-\kappa\tau_N}).
\end{equation}

Let $\widehat\xi_{1,N},\ldots,\widehat\xi_{N,N}$ be i.i.d.\ with law
$\mu_N$.  For each $j$, independently of the other coordinates, couple
$\widehat\xi_{j,N}$ maximally with $\xi_j$; see, for example,
Lindvall~\cite{Lindvall1992}.  For the total variation (TV) distance, we use the convention
$d_{\mathrm{TV}}(\nu_1,\nu_2)=\sup_B|\nu_1(B)-\nu_2(B)|$, where the
supremum ranges over Borel sets,
\begin{equation}\label{eq:maximal-coupling}
 \Prob(\widehat\xi_{j,N}\ne\xi_j)
 =d_{\mathrm{TV}}(\mu_N,\mu)
 =q_N.
\end{equation}
Indeed, $\mu_N$ equals $p_N^{-1}\mu$ on Borel subsets of
$[-\tau_N,\tau_N]$ and vanishes outside that interval; the largest
discrepancy is attained by the complement of the interval.

Using the same GUE matrix $V_N$ as in \eqref{eq:model}, define
\[
 \widehat M_N=
 \diag(\widehat\xi_{1,N},\ldots,\widehat\xi_{N,N})
 +\sqrt{2S_N}\,V_N,
\]
and let $\widehat\lambda_N$ denote its largest eigenvalue.  The following
lemma says that this replacement is invisible at every exponential scale
appearing in the main theorem.

\begin{lemma}\label{lem:truncation-coupling}
There are constants $c,C>0$ such that
\begin{equation}\label{eq:truncation-coupling-prob}
 \Prob(\widehat\lambda_N\ne\lambda_N)
 \le CN e^{-c\tau_N}.
\end{equation}
Consequently, for
\[
 \upsilon_N\in
 \left\{a_N^{3/2},\,a_N^3,\,\alpha_N^2a_N^2\right\},
\]
one has
\begin{equation}\label{eq:truncation-speed}
 \limsup_{N\to\infty}\frac1{\upsilon_N}
 \log\Prob(\widehat\lambda_N\ne\lambda_N)=-\infty.
\end{equation}
In particular, at each of these speeds, the scaled variables
$\sqrt N(\widehat\lambda_N-R_N)/\alpha_N$ and $Z_N$ are exponentially
equivalent.
\end{lemma}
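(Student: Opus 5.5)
The plan is a union bound over the coordinates.  Since $\widehat M_N$ and $M_N$ are built from the same GUE matrix $V_N$, the two matrices coincide on the event $\{\widehat\xi_{j,N}=\xi_j \text{ for all } j\}$.  Hence $\{\widehat\lambda_N\ne\lambda_N\}\subset\bigcup_{j\le N}\{\widehat\xi_{j,N}\ne\xi_j\}$.  By \eqref{eq:maximal-coupling} and \eqref{eq:xi-tail-bound},
\[
 \Prob(\widehat\lambda_N\ne\lambda_N)\le Nq_N\le N\,\E e^{\kappa|\xi_1|}\,e^{-\kappa\tau_N},
\]
which is \eqref{eq:truncation-coupling-prob} with $c=\kappa$ and $C=\E e^{\kappa|\xi_1|}$.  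The bound only requires $N\ge N_0$, so that $\mu_N$ is defined, and the coupling is made independently across $j$ and independently of $V_N$.  This preserves the law of the pair $(\diag(\xi_j),V_N)$, and $\widehat\xi_{j,N}$ are i.i.d.\ with law $\mu_N$, as required.

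For \eqref{eq:truncation-speed}, take logarithms:
\[
 \log\Prob(\widehat\lambda_N\ne\lambda_N)\le \log C+\log N-c N^{1/24}.
\]
It then suffices to show that each $\upsilon_N$ is $o(N^{1/24})$.  By \Cref{ass:scale}, $a_N^3=o(\log N)$, so $a_N^{3/2}$ and $a_N^3$ are $O(\log N)$.  By \Cref{ass:alpha}, $\alpha_N^2=N^{o(1)}$, so $\alpha_N^2a_N^2=N^{o(1)}(\log N)^{2/3}=N^{o(1)}$.  In each case $\upsilon_N\to\infty$ and $N^{1/24}/\upsilon_N\to\infty$, while $\log N/N^{1/24}\to0$.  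Dividing by $\upsilon_N$ therefore sends the right-hand side to $-\infty$.

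For the exponential equivalence, note that the scaled variables differ only on the event $\{\widehat\lambda_N\ne\lambda_N\}$.  So for every $\eta>0$, $\Prob\bigl(|\sqrt N(\widehat\lambda_N-R_N)/\alpha_N - Z_N|>\eta\bigr)\le\Prob(\widehat\lambda_N\ne\lambda_N)$, which is superexponentially small at each speed.  This is exactly the definition of exponential equivalence; no normalization by $a_N$ matters, since the difference is identically zero off a negligible event.

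There is no real obstacle.  The only point needing care is verifying that $\alpha_N^2a_N^2$ is $N^{o(1)}$, which uses both assumptions, and noting that the $N$ prefactor from the union bound is harmless against the stretched-exponential factor $e^{-cN^{1/24}}$.
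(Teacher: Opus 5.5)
Your proof is correct and follows the paper's argument essentially step for step. You use a union bound over the maximal coordinate couplings with a common $V_N$, note that all three speeds are $N^{o(1)}$ against $e^{-cN^{1/24}}$, and obtain exponential equivalence because the scaled variables differ only on the coupling-failure event.
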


\begin{proof}
A union bound over the maximal coordinate couplings
\eqref{eq:maximal-coupling}, together with \eqref{eq:xi-tail-bound},
gives
\[
 \Prob\bigl(\exists j\le N:
 \widehat\xi_{j,N}\ne\xi_j\bigr)
 \le Nq_N
 \le CN e^{-c\tau_N}.
\]
On the complementary event, all diagonal entries coincide.  Since the
same matrix $V_N$ is used in both models, the two full matrices, and
therefore their largest eigenvalues, coincide as well.  This proves
\eqref{eq:truncation-coupling-prob}.

Under \Cref{ass:alpha,ass:scale}, each displayed
speed is $N^{o(1)}$, whereas $\tau_N=N^{1/24}$ and the factor $N$ is
absorbed by a further factor $e^{-c'\tau_N}$.  Hence
\eqref{eq:truncation-speed} follows.  Finally, the two scaled largest
eigenvalues differ only on the event in
\eqref{eq:truncation-coupling-prob}, so their exponential equivalence is
immediate.
\end{proof}

From now until the final transfer back to the original matrix $M_N$,
probabilities and expectations refer to the truncated triangular array
unless another measure is displayed explicitly, and we suppress the hats.
Thus, in the $N$th row, the variables denoted by $\xi_j$ are i.i.d. with
law $\mu_N$ and satisfy $|\xi_j|\le\tau_N$ almost surely.  We write
$\bxi=(\xi_1,\ldots,\xi_N)$ for the disorder configuration in the $N$th row.


\subsection{The population saddle}


For $k\ge1$, define the moments of the conditioned law by
\[
 m_{k,N}=\int t^k\,\dd\mu_N(t).
\]
For every fixed integer $k\ge1$,
\begin{equation}\label{eq:tail-moment-bound}
 \int_{|t|>\tau_N}|t|^k\,\dd\mu(t)
 \le C_ke^{-c\tau_N}.
\end{equation}
Indeed, for all sufficiently large $N$,
\begin{align*}
 \int_{|t|>\tau_N}|t|^k\,\dd\mu(t)
\le
 \left(\int e^{\kappa |t|/2}\,\dd\mu(t)\right)
 \sup_{u>\tau_N}u^ke^{-\kappa u/2}
\le C_ke^{-c\tau_N}.
\end{align*}
Since $\int t\,\dd\mu(t)=0$,
\begin{equation}\label{eq:m1-tail}
 m_{1,N}
 =-p_N^{-1}\int_{|t|>\tau_N}t\,\dd\mu(t).
\end{equation}
Similarly,
\begin{equation}\label{eq:m2-tail}
 m_{2,N}-\sigma^2
 =p_N^{-1}\bigg\{(1-p_N)\sigma^2
 -\int_{|t|>\tau_N}t^2\,\dd\mu(t)\bigg\}.
\end{equation}
Using \eqref{eq:tail-moment-bound}, $p_N\to1$, and the uniform absolute
moment bound under $\mu$, we obtain, for every fixed $k\ge1$,
\begin{equation}\label{eq:truncated-moments}
 m_{1,N}=O(e^{-c\tau_N}),
 \qquad
 m_{2,N}=\sigma^2+O(e^{-c\tau_N}),
 \qquad
 \sup_{N\ge N_0}\int |t|^k\,\dd\mu_N(t)<\infty.
\end{equation}
Thus truncation changes the first two moments only by a
superpolynomially small amount, while giving uniform moments of every
fixed order.

We now construct the deterministic saddle and derive its expansion.  The
leading asymptotics $w_N\sim A_N$ are all that is needed to control the
contours, but the next term in $R_N$ is also identified below in order
to make the deterministic centering completely explicit.

\begin{lemma}
\label{lem:population-saddle}
For all sufficiently large $N$, equation \eqref{eq:population-saddle}
has a unique solution $w_N>2\tau_N$.  Moreover,
\begin{align}
 \frac{w_N}{A_N}
 &=1+O(A_N^{-2})+O(e^{-c\tau_N}),                      \label{eq:w-asymptotic}\\
 R_N
 &=2A_N+O(A_N^{-1})+O(e^{-c\tau_N}).                  \label{eq:R-asymptotic}
\end{align}
More precisely,
\begin{equation}\label{eq:wR-precise}
 w_N=A_N+\frac{3\sigma^2}{2A_N}+o(A_N^{-1}),
 \qquad
 R_N=2A_N+\frac{\sigma^2}{A_N}+o(A_N^{-1}).
\end{equation}
In particular, $w_N\sim A_N$ and $R_N\sim2A_N$.
\end{lemma}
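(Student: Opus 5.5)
The plan is to treat \eqref{eq:population-saddle} as a monotone scalar equation in $w$ and then expand it in powers of $1/w$. Set
\[
 \Phi_N(w)=\int\frac{\dd\mu_N(t)}{(w-t)^2},\qquad w>\tau_N .
\]

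\textbf{Existence and uniqueness.} Since $\mu_N$ is supported on $[-\tau_N,\tau_N]$, the function $\Phi_N$ is smooth and strictly decreasing on $(\tau_N,\infty)$, because $\Phi_N'(w)=-2\int(w-t)^{-3}\dd\mu_N<0$. It also tends to $0$ as $w\to\infty$. At $w=2\tau_N$ we have $w-t\le 3\tau_N$, so $\Phi_N(2\tau_N)\ge (3\tau_N)^{-2}$. This is larger than $A_N^{-2}$, because $A_N/\tau_N=N^{1/8+o(1)}\to\infty$ by \Cref{ass:alpha}. The intermediate value theorem together with strict monotonicity then gives a unique root $w_N>2\tau_N$.

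A crude bound comes first. For $w>2\tau_N$ we have $|t/w|\le 1/2$, so $\Phi_N(w)\asymp w^{-2}$, and therefore $w_N\asymp A_N$. In particular $\tau_N/w_N=N^{-1/8+o(1)}$, which is what makes the expansions below converge with uniform control.

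\textbf{Expansion of $\Phi_N$ and $w_N$.} Expand
\[
 (w-t)^{-2}=w^{-2}\sum_{k\ge0}(k+1)(t/w)^k .
\]
Integrating termwise against $\mu_N$, which is legitimate for $|t|/w\le1/2$, and using \eqref{eq:truncated-moments} gives
\[
 \Phi_N(w)=w^{-2}\Bigl(1+2m_{1,N}w^{-1}+3m_{2,N}w^{-2}+O(w^{-3})\Bigr).
\]
The $O(w^{-3})$ term is uniform because $|m_{3,N}|$ is bounded. Here $m_{1,N}=O(e^{-c\tau_N})$ and $m_{2,N}=\sigma^2+O(e^{-c\tau_N})$. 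Setting $\Phi_N(w_N)=A_N^{-2}$ and taking square roots yields
\[
 A_N/w_N=1+\tfrac32\sigma^2w_N^{-2}+O(w_N^{-3})+O(e^{-c\tau_N}).
\]
This gives \eqref{eq:w-asymptotic}, and substituting $w_N\asymp A_N$ gives $w_N=A_N+\frac{3\sigma^2}{2A_N}+o(A_N^{-1})$. The superexponentially small error coming from $m_{1,N}$ is $o(A_N^{-1})$, since $A_N=N^{1/6+o(1)}$.

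\textbf{Expansion of $R_N$.} Similarly,
\[
 G_N(w)=w^{-1}\bigl(1+m_{1,N}w^{-1}+m_{2,N}w^{-2}+O(w^{-3})\bigr).
\]
Hence
\[
 R_N=w_N+\frac{A_N^2}{w_N}\Bigl(1+\frac{\sigma^2}{w_N^2}\Bigr)+O(A_N^{-2})+O(A_Ne^{-c\tau_N}).
\]
Insert $A_N^2/w_N=A_N(1-\frac{3\sigma^2}{2A_N^2}+o(A_N^{-2}))$ and $w_N=A_N+\frac{3\sigma^2}{2A_N}$. The $\pm\frac{3\sigma^2}{2A_N}$ terms cancel, and the remaining terms give
\[
 R_N=2A_N+\frac{\sigma^2}{A_N}+o(A_N^{-1}).
\]
The factor $A_N$ multiplying $e^{-c\tau_N}$ is polynomial, so it is absorbed by shrinking $c$.

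\textbf{Main obstacle.} Conceptually the argument is routine. The only delicate point is bookkeeping the error terms so that they are uniform in $N$. This uses three inputs: the support bound $\tau_N\ll w$, the uniform moment bounds in \eqref{eq:truncated-moments}, and the fact that $\alpha_N=N^{o(1)}$, which makes $A_N$ polynomial in $N$ so that the $e^{-c\tau_N}$ errors can be absorbed.
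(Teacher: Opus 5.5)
Your proposal is correct and follows the paper's route: monotonicity plus the intermediate value theorem at $w=2\tau_N$, geometric-series expansions of $(w-t)^{-2}$ and $(w-t)^{-1}$ controlled by the uniform truncated moments, and the cancellation of the $\pm\frac{3\sigma^2}{2A_N}$ terms in $R_N$. The paper gets $w_N/A_N\to1$ from the sandwich $w_N-\tau_N\le A_N\le w_N+\tau_N$ instead of your crude $w_N\asymp A_N$ bootstrap, which works equally well. One slip: after taking square roots the display should read $w_N/A_N=1+\tfrac32\sigma^2w_N^{-2}+\cdots$, equivalently $A_N/w_N=1-\tfrac32\sigma^2w_N^{-2}+\cdots$; as written it flips the sign of the $\frac{3\sigma^2}{2A_N}$ correction, although your later formulas use the correct version.
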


\begin{proof}
For $w>\tau_N$, set
\[
 h_N(w)=\int\frac{1}{(w-t)^2}\,\dd\mu_N(t).
\]
The function is continuous and strictly decreasing, since
\[
 h_N'(w)=-2\int\frac{1}{(w-t)^3}\,\dd\mu_N(t)<0.
\]
It tends to zero as $w\to\infty$.  Because $A_N/\tau_N\to\infty$,
\[
 h_N(2\tau_N)
 \ge\frac{1}{(3\tau_N)^2}
 \gg\frac1{A_N^2}.
\]
The intermediate value theorem therefore gives a unique solution
$w_N>2\tau_N$ of \eqref{eq:population-saddle}.  At this solution,
\[
 \frac1{(w_N+\tau_N)^2}
 \le h_N(w_N)=A_N^{-2}
 \le\frac1{(w_N-\tau_N)^2},
\]
and hence
\[
 w_N-\tau_N\le A_N\le w_N+\tau_N.
\]
Since $\tau_N/A_N\to0$, this directly proves $w_N/A_N\to1$.

For $w>2\tau_N$, the power series
\[
 \frac1{(w-t)^2}
 =\frac1{w^2}\sum_{\ell=0}^{\infty}(\ell+1)
 \left(\frac{t}{w}\right)^\ell
\]
converges absolutely and uniformly on the support of $\mu_N$.
Because $|t|/w\le1/2$, the terms with $\ell\ge3$ contribute at most
$Cw^{-5}\int|t|^3\,\dd\mu_N(t)$ to $h_N(w)$.  Using
\eqref{eq:truncated-moments}, we therefore have
\begin{equation}\label{eq:H-expansion}
 h_N(w)=\frac1{w^2}
 \left(
 1+\frac{2m_{1,N}}w+\frac{3m_{2,N}}{w^2}+O(w^{-3})
 \right),
\end{equation}
with an implicit constant uniform in $N$.

At $w=w_N$, the relation $h_N(w_N)=A_N^{-2}$ and $w_N\sim A_N$ give
more explicitly
\[
 \frac{w_N^2}{A_N^2}
 =1+\frac{2m_{1,N}}{w_N}
  +\frac{3m_{2,N}}{w_N^2}+O(w_N^{-3})
 =1+\frac{3\sigma^2}{A_N^2}+o(A_N^{-2}).
\]
Equivalently,
\[
 \frac{A_N^2}{w_N^2}
 =1-\frac{3\sigma^2}{A_N^2}+o(A_N^{-2}).
\]
Taking positive square roots and inverting,
\[
 w_N=A_N+\frac{3\sigma^2}{2A_N}+o(A_N^{-1}).
\]
This proves both \eqref{eq:w-asymptotic} and the first half of
\eqref{eq:wR-precise}.

Similarly,
\begin{equation}\label{eq:G-expansion}
 G_N(w)=\frac1w+\frac{m_{1,N}}{w^2}
 +\frac{m_{2,N}}{w^3}+O(w^{-4}),
\end{equation}
where the remainder comes from the terms of order $|t|^3/w^4$.  At
$w=w_N$, equations \eqref{eq:truncated-moments} and the expansion of
$w_N$ yield the component estimates
\[
 \frac1{w_N}=\frac1{A_N}-\frac{3\sigma^2}{2A_N^3}+o(A_N^{-3}),
 \qquad
 \frac{m_{1,N}}{w_N^2}=o(A_N^{-3}),
 \qquad
 \frac{m_{2,N}}{w_N^3}=\frac{\sigma^2}{A_N^3}+o(A_N^{-3}).
\]
Consequently,
\[
 G_N(w_N)
 =\frac1{A_N}-\frac{\sigma^2}{2A_N^3}+o(A_N^{-3}),
\]
and hence
\[
 A_N^2G_N(w_N)
 =A_N-\frac{\sigma^2}{2A_N}+o(A_N^{-1}).
\]
Adding the expansion of $w_N$ gives
\[
 R_N=w_N+A_N^2G_N(w_N)
 =2A_N+\frac{\sigma^2}{A_N}+o(A_N^{-1}),
\]
which implies \eqref{eq:R-asymptotic}.  As usual, polynomial factors
multiplying $e^{-c\tau_N}$ have been absorbed by decreasing the constant
$c>0$.
\end{proof}

The formula \eqref{eq:wR-precise} identifies the centering.  The
remaining arguments use $w_N\sim A_N$, the separation $w_N>2\tau_N$, and
the uniform moment bounds in \eqref{eq:truncated-moments}.


\subsection{A uniform good event}


The conditional estimates below must hold uniformly over typical
disorder configurations.  We therefore
introduce an event on which the empirical moments entering the resolvent
expansion are uniformly close to their population counterparts.

Define
\begin{equation}\label{eq:good-event}
 \cG_N=
 \bigg\{
 \max_{1\le j\le N}|\xi_j|\le\tau_N,\quad
 \max_{1\le k\le6}
 \Big|\frac1N\sum_{j=1}^N\xi_j^k-m_{k,N}\Big|
 \le N^{-1/2+\delta}
 \bigg\}.
\end{equation}
We also write $\bxi\in\cG_N$ when a deterministic configuration
satisfies both the displayed support constraint and the moment inequalities.
The range $k\le6$ is the one required by the sixth-order resolvent expansion
used in \Cref{sec:shift}.  Under $\mu_N$, the support constraint holds
almost surely.

\begin{lemma}\label{lem:good-event}
There are constants $c,C>0$ such that
\begin{equation}\label{eq:good-event-probability}
 \Prob(\cG_N^c)\le Ce^{-cN^{2\delta}}.
\end{equation}
In particular, $\cG_N^c$ is superexponentially small at every speed
appearing in \Cref{thm:main}.
\end{lemma}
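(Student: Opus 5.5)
The support constraint $\max_j|\xi_j|\le\tau_N$ holds almost surely under the truncated array, since each $\xi_j$ has law $\mu_N$ supported on $[-\tau_N,\tau_N]$. Hence $\cG_N^c$ is contained in the union over $k=1,\dots,6$ of the events
\[
 E_{k,N}=\Big\{\Big|\frac1N\sum_{j=1}^N\bigl(\xi_j^k-m_{k,N}\bigr)\Big|>N^{-1/2+\delta}\Big\}.
\]
A union bound over the six values of $k$ reduces the claim to bounding each $\Prob(E_{k,N})$ by $Ce^{-cN^{2\delta}}$.

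For fixed $k$, the centered summands $Y_j=\xi_j^k-m_{k,N}$ are i.i.d. Their variance is bounded uniformly in $N$ by \eqref{eq:truncated-moments}, and they are bounded by $|Y_j|\le 2\tau_N^k=2N^{k/24}\le 2N^{1/4}$. Bernstein's inequality with deviation $t=N^{1/2+\delta}$ for $\sum_j Y_j$ then gives
\[
 \Prob(E_{k,N})\le 2\exp\Big\{-\frac{t^2/2}{N v+ \tfrac13 (2\tau_N^k)t}\Big\},
 \qquad v=\sup_N\Var(Y_1).
\]
The denominator is at most
\[
 CN+CN^{1/4}N^{1/2+\delta}=CN+CN^{3/4+\delta}\le C'N,
\]
because $\delta<1/100$. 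The exponent is therefore at least $cN^{1+2\delta}/N=cN^{2\delta}$. This is exactly where the requirement $6\gamma_0<1/2-\delta$ enters: it guarantees that $\tau_N^k t\le N^{6\gamma_0+1/2+\delta}\le N$, so the variance term dominates. Summing over $k\le6$ proves \eqref{eq:good-event-probability}.

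For the final claim, every speed appearing in \Cref{thm:main} is $N^{o(1)}$ by \Cref{ass:alpha,ass:scale}, while $\log\Prob(\cG_N^c)\le -cN^{2\delta}+C$. Dividing by a speed $\upsilon_N=N^{o(1)}$ sends the ratio to $-\infty$. This is routine, and I expect no genuine obstacle here. The only point requiring care is checking that the bounded-increment term in Bernstein's inequality stays subordinate to the variance term, which the choice $\gamma_0=1/24$ ensures.
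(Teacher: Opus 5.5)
Your proposal is correct and matches the paper's argument essentially step by step. Both use a union bound over $k\le 6$ and Bernstein's inequality for the bounded centered summands $\xi_j^k-m_{k,N}$, with $|Y_j|\le 2\tau_N^k$ and uniformly bounded variance; both check that $\tau_N^k N^{1/2+\delta}\le N^{3/4+\delta}=o(N)$, so the variance term dominates, and conclude from the fact that every speed in \Cref{thm:main} is $N^{o(1)}$.
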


\begin{proof}
Fix $1\le k\le6$ and put
\[
 Y_{j,k}=\xi_j^k-m_{k,N}.
\]
Under $\mu_N$,
\[
 |Y_{j,k}|\le2\tau_N^k,
 \qquad
 \Var(Y_{j,k})\le \E_{\mu_N}\xi_1^{2k}\le C_k,
\]
where $C_k$ is independent of $N$ by
\eqref{eq:truncated-moments}.  Let
$t_N=N^{-1/2+\delta}$.  Bernstein's inequality in Boucheron, Lugosi, and
Massart \cite[Theorem~2.10 and Corollary~2.11]{BoucheronLugosiMassart2013},
applied with $|Y_{j,k}|\le2\tau_N^k$ and
$\sum_{j=1}^N\Var(Y_{j,k})\le NC_k$, gives
\begin{align*}
 \Prob\left(
 \Big|\frac1N\sum_{j=1}^NY_{j,k}\Big|>t_N
 \right)
 \le
 2\exp\left\{-
 \frac{\frac12Nt_N^2}
 {C_k+\frac23\tau_N^k t_N}
 \right\}=
 2\exp\left\{-
 \frac{\frac12N^{2\delta}}
 {C_k+\frac23N^{k\gamma_0-1/2+\delta}}
 \right\}.
\end{align*}
Since $k\gamma_0\le6/24=1/4$ and $\delta<1/100$, the second term in the
denominator tends to zero.  A union bound over the six values of $k$
therefore yields \eqref{eq:good-event-probability}.

Finally, $a_N^{3/2}$, $a_N^3$, and $\alpha_N^2a_N^2$ are all
$N^{o(1)}$, while $N^{2\delta}$ is a fixed positive power of $N$.  Thus
$\cG_N^c$ is superexponentially small at each of these speeds.
\end{proof}

On $\cG_N$, the empirical resolvent coefficients are uniformly bounded,
and the conditional edge can be expanded about the population saddle
$w_N$.  The random part of that expansion is the edge displacement
analyzed in the next section.


\section{Gaussian moderate deviations for the edge displacement}\label{sec:shift}


This section isolates the random motion of the edge caused by the diagonal
disorder.  For a truncated configuration $\bxi$, define
\begin{equation}\label{eq:vN}
 v_N(\bxi)=w_N+S_N\sum_{j=1}^N\frac1{w_N-\xi_j}.
\end{equation}
All denominators are positive because $w_N>2\tau_N$.  Moreover, since
$N S_N=A_N^2$,
\begin{equation}\label{eq:vN-bounds}
 w_N+\frac{A_N^2}{w_N+\tau_N}
 \le v_N(\bxi)
 \le w_N+\frac{A_N^2}{w_N-\tau_N}.
\end{equation}
Thus $v_N(\bxi)\asymp w_N$ uniformly over all truncated configurations.
Here $v_N$ is the first-order random saddle parameter about $w_N$, 
whereas $R_N$ is the deterministic centering.

Define
\begin{equation}\label{eq:chi-s}
 \chi_N=\frac{\sqrt N}{\alpha_N}(\lambda_N-v_N),
 \qquad
 s_N=\frac{\sqrt N}{\alpha_N}(v_N-R_N).
\end{equation}
Then the decomposition
\begin{equation}\label{eq:Z-decomposition}
 \frac{\sqrt N}{\alpha_N}(\lambda_N-R_N)=\chi_N+s_N
\end{equation}
is exact for the truncated ensemble.  The first term will be handled by
the conditional Airy analysis, while the present section proves a
Gaussian MDP for the second term.

Set
\begin{equation}\label{eq:UN}
 U_N=\alpha_Ns_N.
\end{equation}
By \eqref{eq:RN}, \eqref{eq:vN}, and $S_N=\alpha_N^2N^{-2/3}$,
\begin{equation}\label{eq:U-exact}
 U_N
 =\sqrt N\bigl(v_N-R_N\bigr)
 =\alpha_N^2N^{-1/6}
 \sum_{j=1}^N
 \Big(
 \frac1{w_N-\xi_j}-G_N(w_N)
 \Big).
\end{equation}
The extra factor $\alpha_N$ in the definition of $U_N$ is chosen so
that, on the moderate scale $a_N$,
\[
 \frac{U_N}{\alpha_Na_N}=\frac{s_N}{a_N}.
\]
Consequently, an MDP for $U_N/(\alpha_Na_N)$ with speed
$(\alpha_Na_N)^2$ is precisely an MDP for the normalized displacement
$s_N/a_N$.

We first prove the Gaussian MDP for the leading linear statistic.

\begin{lemma}\label{lem:sum-mdp}
Let $b_N\to\infty$, $b_N=o(\sqrt N)$, and $b_N=N^{o(1)}$.  Define
\[
 T_N=\frac1{\sqrt N}
 \sum_{j=1}^N(\xi_j-m_{1,N}).
\]
Then $T_N/b_N$ satisfies an MDP on $\R$ with speed $b_N^2$ and good rate
function
\begin{equation}\label{eq:Gaussian-rate}
 I_G(x)=\frac{x^2}{2\sigma^2}.
\end{equation}
\end{lemma}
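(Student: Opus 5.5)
The plan is to use the Gärtner--Ellis theorem for the triangular array. Write $\eta_{j}=\xi_j-m_{1,N}$. In the $N$th row these are i.i.d. with law $\mu_N$ shifted by $m_{1,N}$, centered, and of variance $m_{2,N}-m_{1,N}^2=\sigma^2+O(e^{-c\tau_N})$ by \eqref{eq:truncated-moments}. For fixed $\lambda\in\R$, independence gives the scaled cumulant generating function
\[
 \Lambda_N(\lambda)=\frac1{b_N^2}\log\E\exp\Big\{\lambda b_N^2\frac{T_N}{b_N}\Big\}
 =\frac{N}{b_N^2}\log\E\exp\Big\{\frac{\lambda b_N}{\sqrt N}\eta_1\Big\}.
\]
The goal is to show $\Lambda_N(\lambda)\to\sigma^2\lambda^2/2$ for every $\lambda\in\R$. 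The limit is finite, smooth and essentially smooth, so Gärtner--Ellis yields the full MDP with good rate function equal to its Legendre transform, which is $I_G$ in \eqref{eq:Gaussian-rate}.

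For the limit, set $\epsilon_N=\lambda b_N/\sqrt N\to0$, using $b_N=o(\sqrt N)$. Expand $e^{\epsilon\eta}=1+\epsilon\eta+\tfrac12\epsilon^2\eta^2+r(\epsilon\eta)$ with $|r(y)|\le \tfrac16|y|^3e^{|y|}$. Since $\E\eta_1=0$, this gives
\[
 \E e^{\epsilon_N\eta_1}=1+\tfrac12\epsilon_N^2(\sigma^2+o(1))+O\big(|\epsilon_N|^3\E|\eta_1|^3e^{|\epsilon_N||\eta_1|}\big).
\]
The main obstacle is controlling the remainder uniformly in $N$, because $\mu_N$ varies with $N$. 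I would not rely on the truncation bound $|\eta_1|\le 2\tau_N$: $\epsilon_N\tau_N$ need not be small, since $b_N$ is only $N^{o(1)}$ and $\tau_N=N^{1/24}$ (it is actually small here, but the Cramér route is cleaner). Instead, use \eqref{eq:exp-moment}. Since $\mu_N\le p_N^{-1}\mu$ and $|m_{1,N}|\le1$ for large $N$, once $|\epsilon_N|\le\kappa/2$ we have $\E_{\mu_N}|\eta_1|^3e^{|\epsilon_N||\eta_1|}\le Cp_N^{-1}\E_\mu (|\xi_1|+1)^3e^{\kappa(|\xi_1|+1)/2}\le C'$, uniformly in $N$. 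Hence $\E e^{\epsilon_N\eta_1}=1+\tfrac12\sigma^2\epsilon_N^2+o(\epsilon_N^2)$.

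Taking logarithms, $\log(1+y)=y+O(y^2)$ gives $\log\E e^{\epsilon_N\eta_1}=\tfrac12\sigma^2\epsilon_N^2(1+o(1))$. Multiplying by $N/b_N^2$ and using $N\epsilon_N^2/b_N^2=\lambda^2$ gives $\Lambda_N(\lambda)\to\sigma^2\lambda^2/2$. The hypothesis $b_N=N^{o(1)}$ is not needed beyond $b_N=o(\sqrt N)$; it only ensures compatibility with the later speeds. Applying Gärtner--Ellis in $\R$ completes the proof.
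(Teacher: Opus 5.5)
Your proposal is correct and follows essentially the paper's route: G\"artner--Ellis at speed $b_N^2$, with the scaled cumulant generating function shown to converge to $\sigma^2\lambda^2/2$ using uniform-in-$N$ control from the Cram\'er condition. The only difference is cosmetic: you expand the moment generating function with an explicit cubic remainder and then take logarithms, whereas the paper Taylor-expands the log-moment generating function using uniform bounds on its first three derivatives.
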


\begin{proof}
All expectations in this proof are taken under the truncated law
$\mu_N$.  Let
\[
 \Lambda_N(t)=\log\E\exp\{t(\xi_1-m_{1,N})\}.
\]
Choose $t_0\in(0,\kappa/2)$.  Since $p_N\to1$,
\[
 \sup_{N\ge N_0}\E_{\mu_N}
 \Big[(1+|\xi_1|^3)e^{t_0|\xi_1|}\Big]<\infty.
\]
Jensen's inequality gives
$\E_{\mu_N}e^{t(\xi_1-m_{1,N})}\ge1$, so the denominators appearing in
the derivatives of $\Lambda_N$ are uniformly bounded away from zero.
Hence the first three derivatives of $\Lambda_N$ are uniformly bounded
on $[-t_0/2,t_0/2]$.  Since $\Lambda_N'(0)=0$, Taylor's theorem gives
\begin{equation}\label{eq:mgf-expansion}
 \Big|\Lambda_N(t)-\frac{\sigma_N^2t^2}{2}\Big|
 \le C|t|^3,
 \qquad |t|\le t_0/2,
\end{equation}
where, by \eqref{eq:truncated-moments},
\[
 \sigma_N^2=\Var_{\mu_N}(\xi_1)
 =m_{2,N}-m_{1,N}^2
 =\sigma^2+o(1).
\]

For fixed $\lambda\in\R$, the moderate assumption $b_N=o(\sqrt N)$
places $\lambda b_N/\sqrt N$ inside the Taylor window for all large $N$,
and therefore
\begin{align*}
 \frac1{b_N^2}
 \log\E\exp\left\{\lambda b_NT_N\right\}
 &=\frac{N}{b_N^2}
 \Lambda_N\left(\frac{\lambda b_N}{\sqrt N}\right)\\
 &=\frac{\lambda^2\sigma_N^2}{2}
 +O\left(\frac{b_N}{\sqrt N}\right)
 \longrightarrow\frac{\lambda^2\sigma^2}{2}.
\end{align*}
The limiting cumulant generating function is finite and differentiable
on all of $\R$, so the essential smoothness condition in the
G\"artner--Ellis theorem is automatic.  Applying that theorem to the
triangular array $T_N/b_N$, in the form given by Dembo and
Zeitouni~\cite[Theorem~2.3.6]{DemboZeitouni1998},
gives the MDP with rate equal to the Legendre transform of
$\lambda\mapsto\lambda^2\sigma^2/2$, namely \eqref{eq:Gaussian-rate}.
\end{proof}

We next transfer this MDP from the leading linear statistic to the full
resolvent displacement in \eqref{eq:U-exact}.  This is the main
normalization step: the coefficient of $T_N$ must be shown to be
$1+o(1)$, while every higher-order resolvent term must be negligible at
speed $b_N^2$.

\begin{proposition}
\label{prop:shift-mdp}
Let $b_N\to\infty$, $b_N=o(\sqrt N)$, and $b_N=N^{o(1)}$.  Then $U_N/b_N$
satisfies an MDP with speed $b_N^2$ and rate $I_G$ in
\eqref{eq:Gaussian-rate}.  In particular, the proposition applies to
$b_N=\alpha_N a_N$.
\end{proposition}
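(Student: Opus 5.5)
We will expand each summand of \eqref{eq:U-exact} in powers of $\xi_j/w_N$ and show that $U_N$ equals $(1+o(1))\,T_N$ up to a remainder that is superexponentially negligible at speed $b_N^2$. We then conclude with \Cref{lem:sum-mdp} and the standard exponential-equivalence and contraction principles.

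Since $w_N>2\tau_N\ge 2|\xi_j|$, we have the absolutely convergent expansion
\[
 \frac1{w_N-\xi_j}-G_N(w_N)=\sum_{\ell\ge1}\frac{\xi_j^\ell-m_{\ell,N}}{w_N^{\ell+1}} .
\]
Summing over $j$ and using $\alpha_N^2N^{-1/6}=A_N^2N^{-1/2}$ gives
\[
 U_N=\frac{A_N^2}{w_N^2}\,T_N+\sum_{\ell\ge2}\frac{A_N^2N^{1/2}}{w_N^{\ell+1}}\,D_{\ell,N},
 \qquad D_{\ell,N}=\frac1N\sum_{j=1}^N(\xi_j^\ell-m_{\ell,N}).
\]
By \Cref{lem:population-saddle}, the coefficient of $T_N$ is $1+O(A_N^{-2})+O(e^{-c\tau_N})=1+o(1)$. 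The linear term $A_N^2w_N^{-2}T_N/b_N$ therefore inherits the MDP of \Cref{lem:sum-mdp}. Indeed, $(c_N-1)T_N/b_N$ is exponentially negligible, because $\Prob(|T_N|>Kb_N)$ decays at rate $K^2/(2\sigma^2)$ and $K$ may be taken to grow slowly.

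The remainder is controlled in two pieces. For $2\le\ell\le6$, we work on $\cG_N$ of \Cref{lem:good-event}, whose complement has probability at most $Ce^{-cN^{2\delta}}$, which is superexponentially small at speed $b_N^2=N^{o(1)}$. On $\cG_N$ we have $|D_{\ell,N}|\le N^{-1/2+\delta}$, so the $\ell$-th term is at most $A_N^{2}N^{\delta}w_N^{-\ell-1}\lesssim N^{\delta}A_N^{-1}$. Since $A_N=N^{1/6+o(1)}$ and $\delta<1/100$, this is $N^{-c}$, which is $o(b_N)$ uniformly. For $\ell\ge7$, we use the deterministic bound $|D_{\ell,N}|\le2\tau_N^\ell$. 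The tail is then at most
\[
 C A_N^2N^{1/2}\tau_N^7w_N^{-8}\lesssim N^{1/2+7\gamma_0}A_N^{-6}=N^{-1/2+7\gamma_0+o(1)},
\]
which tends to zero; here $7\gamma_0<1/2$ is used, matching the remark in \Cref{sec:truncation}. Hence $|U_N-c_NT_N|\le N^{-c}$ outside an event that is superexponentially small at speed $b_N^2$. This gives exponential equivalence of $U_N/b_N$ and $c_NT_N/b_N$ (see Dembo--Zeitouni, Theorem 4.2.13), and the MDP with rate $I_G$ follows.

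The main obstacle is the bookkeeping for the middle-order terms. A crude bound $\tau_N^\ell$ would fail for small $\ell$, since $\ell=2$ would give $A_N^2N^{1/2}\tau_N^2w_N^{-3}\sim N^{1/3+1/12}$. So the concentration of empirical moments on $\cG_N$ is essential up to order six, and the deterministic bound only takes over from $\ell=7$. Finally, when $b_N=\alpha_Na_N$, \Cref{ass:alpha,ass:scale} give $b_N\to\infty$ and $b_N=N^{o(1)}$, so the proposition applies.
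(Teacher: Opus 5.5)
Your proposal is correct and follows essentially the paper's route. You expand the resolvent, identify the linear term $A_N^2w_N^{-2}T_N$ with coefficient $1+o(1)$, control orders $2$ through $6$ by concentration of the empirical moments, bound orders $\ge7$ deterministically using $7\gamma_0<1/2$, and conclude by exponential equivalence (Dembo--Zeitouni, Theorem~4.2.13). The only differences are cosmetic: you handle orders $2$--$6$ through the good event $\cG_N$ (itself a Bernstein bound) rather than applying Bernstein at level $\eps b_N$, and you absorb $(c_N-1)T_N$ via the MDP upper bound for $T_N$ with $K$ fixed and then $K\to\infty$, instead of Bernstein.
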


\begin{proof}
We prove exponential equivalence between $U_N$ and the linear statistic
$T_N$ from \Cref{lem:sum-mdp}.  Since
$|\xi_j|/w_N\le\tau_N/w_N=o(1)$, the exact sixth-order expansion
\begin{equation}\label{eq:resolvent-six}
 \frac1{w_N-\xi_j}
 =\sum_{k=0}^{6}\frac{\xi_j^k}{w_N^{k+1}}
 +\frac{\xi_j^7}{w_N^7(w_N-\xi_j)}
\end{equation}
holds.  The $k=0$ term disappears after subtracting the $\mu_N$
expectation, because
\[
 G_N(w_N)=\int\frac1{w_N-t}\,\dd\mu_N(t).
\]
Set
\begin{equation}\label{eq:linear-shift-term}
 d_N=\frac{\alpha_N^2N^{-1/6}}{w_N^2}.
\end{equation}
Then the $k=1$ contribution to $U_N$ is
\[
 d_N\sum_{j=1}^N(\xi_j-m_{1,N})
 =d_N\sqrt N\,T_N.
\]
By \eqref{eq:w-asymptotic}, and in fact by the sharper expansion
\eqref{eq:wR-precise},
\begin{equation}\label{eq:dN-asymptotic}
 d_N\sqrt N=\frac{A_N^2}{w_N^2}=1+\rho_N,
 \qquad
 \rho_N=-\frac{3\sigma^2}{A_N^2}+o(A_N^{-2}),
 \qquad
 |\rho_N|\le N^{-1/3+o(1)}.
\end{equation}

For $2\le k\le6$, define
\begin{equation}\label{eq:ckN}
 c_{k,N}=\frac{\alpha_N^2N^{-1/6}}{w_N^{k+1}}
 =\alpha_N^{1-k}N^{-(k+2)/6}(1+o(1)),
\end{equation}
and let
\[
 R_{j,N}=\frac{\xi_j^7}{w_N^7(w_N-\xi_j)}.
\]
With this notation, the complete decomposition is
\begin{equation}\label{eq:Delta-decomposition}
 U_N-T_N
 =\rho_NT_N
 +\sum_{k=2}^{6}c_{k,N}\sum_{j=1}^N(\xi_j^k-m_{k,N})
 +\alpha_N^2N^{-1/6}\sum_{j=1}^N(R_{j,N}-\E R_{j,N}).
\end{equation}
We estimate the three groups of terms separately.

\medskip
\noindent{{\it The linear normalization error}.} \
Bernstein's inequality, applied to the bounded centered variables
$\xi_j-m_{1,N}$, gives
\begin{equation}\label{eq:sum-Bernstein-window}
 \Prob(|T_N|>u)
 \le 2\exp\left\{-c\min\left(u^2,
 {\tau_N^{-1}}{u\sqrt N}\right)\right\},
 \qquad u>0.
\end{equation}
If $\rho_N=0$, there is nothing to prove.  Otherwise, apply
\eqref{eq:sum-Bernstein-window} with $u=\eps b_N/|\rho_N|$.  Since
$|\rho_N|\le N^{-1/3+o(1)}$, $b_N=N^{o(1)}$, and
$\tau_N=N^{1/24}$, both
\[
 \frac{u^2}{b_N^2}=\frac{\eps^2}{\rho_N^2}
 \quad\text{and}\quad
 \frac{u\sqrt N}{\tau_N b_N^2}
 =\frac{\eps\sqrt N}{\tau_N b_N|\rho_N|}
\]
diverge to infinity.  Consequently,
\begin{equation}\label{eq:linear-term-negligible}
 \limsup_{N\to\infty}\frac1{b_N^2}
 \log\Prob(|\rho_N T_N|>\eps b_N)=-\infty.
\end{equation}

\medskip
\noindent{{\it The terms of orders $2$ through $6$}.} \
Bernstein's inequality yields
\begin{align}\label{eq:Bernstein-higher}
\Prob\bigg(
 \Big|c_{k,N}\sum_{j=1}^N
 (\xi_j^k-m_{k,N})\Big|>\eps b_N
 \bigg)\le2\exp\bigg\{-c\min\bigg(
 \frac{\eps^2b_N^2}{Nc_{k,N}^2},
 \frac{\eps b_N}{|c_{k,N}|\tau_N^k}
 \bigg)\bigg\}.
\end{align}
After division by $b_N^2$, the first exponent contains
\[
 \frac1{Nc_{k,N}^2}
 =\bigl(\alpha_N^2N^{1/3}\bigr)^{k-1}(1+o(1))\to\infty,
\]
where the divergence follows from the subpolynomial assumption on
$\alpha_N^{-1}$.  The second exponent contains
\[
 \frac1{b_N|c_{k,N}|\tau_N^k}
 =\frac{\alpha_N^{k-1}}{b_N}
   N^{(k+2)/6-k/24}(1+o(1))\longrightarrow\infty,
\]
because both $b_N$ and $\alpha_N^{\pm1}$ are subpolynomial.  Hence every
term with $2\le k\le6$ is superexponentially negligible at speed
$b_N^2$.

\medskip
\noindent{{\it A deterministic bound for the seventh-order remainder.}} \
Since $w_N-\tau_N\ge w_N/2$ for large $N$,
\[
 |R_{j,N}|
 \le C\frac{\tau_N^7}{w_N^8},
 \qquad
 |\E R_{j,N}|
 \le C\frac{\tau_N^7}{w_N^8}.
\]
Therefore
\begin{align*}
&\Big|\alpha_N^2N^{-1/6}\sum_{j=1}^N
 (R_{j,N}-\E R_{j,N})\Big|\\
&\le C\alpha_N^{-6}N^{-1/2}\tau_N^7(1+o(1))
=N^{-5/24+o(1)}=o(b_N).
\end{align*}
In particular, for every fixed $\eps>0$, this remainder term is
eventually smaller than $\eps b_N/7$.

Combining the three estimates in \eqref{eq:Delta-decomposition}, and
using a union bound with $\eps/7$ for each of the six random terms,
yields, for every $\eps>0$,
\begin{equation}\label{eq:shift-exp-equivalence}
 \limsup_{N\to\infty}\frac1{b_N^2}
 \log\Prob(|U_N-T_N|>\eps b_N)=-\infty.
\end{equation}
Thus $U_N/b_N$ and $T_N/b_N$ are exponentially equivalent at speed
$b_N^2$.  Exponential equivalence can transfer the MDP from $T_N/b_N$ to
$U_N/b_N$; see Dembo and
Zeitouni~\cite[Theorem~4.2.13]{DemboZeitouni1998}.
\end{proof}

For the choice $b_N=\alpha_Na_N$, \Cref{prop:shift-mdp} says that
\[
 \frac{s_N}{a_N}=\frac{U_N}{\alpha_Na_N}
\]
satisfies a Gaussian MDP with speed $\alpha_N^2a_N^2$ and rate
$x^2/(2\sigma^2)$.  This is exactly the displacement estimate used in the
conditional convolution argument of \Cref{sec:phase-proof}.


\section{Conditional kernel and saddle geometry}\label{sec:kernel}


We now turn to the conditional fluctuation of the largest eigenvalue
about the random saddle parameter $v_N$.  This section has three results:
the conditional Fredholm determinant, a uniform cubic expansion of the
phase near $w_N$, and a global decay estimate away from the saddle.  The
last two are exactly the estimates needed for the trace-norm Airy
approximation in \Cref{sec:airy}.

Let $\cF_N=\sigma(\xi_1,\ldots,\xi_N)$.  Conditionally on $\cF_N$, the
eigenvalues of the truncated matrix form a determinantal point process.
When no ambiguity can arise, we suppress the configuration argument
$\bxi$ from $v_N$, $f_N$, $\Phi_N$, and the kernels.  Johansson's conditional
kernel~\cite[Section~5, Equations~(67)--(70)]{Johansson2007} is
\begin{equation}\label{eq:Johansson-kernel}
 K_N(u,v;\bxi)
 =\frac1{(2\pi {\rm i})^2S_N}
 \int_\gamma\dd z\int_\Gamma\dd w\,
 e^{(w-v)^2/(2S_N)-(z-u)^2/(2S_N)}
 \frac1{w-z}
 \prod_{j=1}^N\frac{w-\xi_j}{z-\xi_j}.
\end{equation}
Here $\gamma$ is a positively oriented simple contour enclosing all
$\xi_j$, and $\Gamma$ is an upward-oriented vertical contour to the
right of $\gamma$.  For coinciding diagonal entries, the coalescing
limit of the distinct-entry formula may be taken with fixed common contours,
because the integrand and the conditional matrix law are continuous
in the diagonal entries.

The inclusion--exclusion formula for a finite determinantal point
process (see Hough, Krishnapur, Peres, and
Vir\'ag~\cite[Chapter~4]{HoughKrishnapurPeresVirag2009}) gives the gap
probability as the finite Fredholm series below; at this stage, we use that
finite series to define the determinant in
\begin{equation}\label{eq:conditional-Fredholm}
 \Prob(\lambda_N\le t\mid\cF_N)
 =\det(I-K_N)_{L^2(t,\infty)}.
\end{equation}
Indeed, since the conditional process has exactly $N$ particles, its
correlation functions, and hence its Fredholm minors, vanish for
$k>N$.  Thus the finite series is
\begin{equation}\label{eq:Fredholm-series}
 \det(I-K_N)_{L^2(t,\infty)}
 =\sum_{k=0}^N\frac{(-1)^k}{k!}
 \int_{(t,\infty)^k}
 \det\bigl(K_N(x_i,x_j;\bxi)\bigr)_{i,j=1}^k
 \dd x_1\cdots\dd x_k.
\end{equation}
This finite series interpretation also justifies the conjugation in
\Cref{sec:airy} term by term.

Next, we introduce the phase locally by
\begin{equation}\label{eq:phase}
 f_N(w;\bxi)
 =\frac{w^2}{2S_N}-\frac{v_N(\bxi)w}{S_N}
 +\sum_{j=1}^N\log(w-\xi_j).
\end{equation}
On the disk $\{w:|w-w_N|<w_N-\tau_N\}$, use the analytic branches
normalized by $\log(w_N-\xi_j)\in\R$; along each open deformed ray,
continue these branches from the saddle.  On the original closed
$z$-contour, we use the single-valued identity
\begin{equation}\label{eq:phase-difference-product}
 e^{f_N(w)-f_N(z)}
 =\exp\bigg\{\frac{w^2-z^2}{2S_N}
 -\frac{v_N(\bxi)(w-z)}{S_N}\bigg\}
 \prod_{j=1}^N\frac{w-\xi_j}{z-\xi_j}.
\end{equation}
Consequently, the derivatives and exponentiated phase differences used
below are well defined independently of the local branch choices.

By the definition of $v_N$ in \eqref{eq:vN},
\begin{equation}\label{eq:fprime-zero}
 f_N'(w_N;\bxi)=0.
\end{equation}
Moreover,
\begin{align}
 f_N''(w_N;\bxi)
 &=\frac1{S_N}-\sum_{j=1}^N\frac1{(w_N-\xi_j)^2},
 \label{eq:fsecond}\\
 f_N^{(3)}(w_N;\bxi)
 &=2\sum_{j=1}^N\frac1{(w_N-\xi_j)^3},
 \label{eq:fthird}\\
 f_N^{(m)}(w_N;\bxi)
 &=(-1)^{m-1}(m-1)!
 \sum_{j=1}^N\frac1{(w_N-\xi_j)^m},
 \qquad m\ge3.                                         \label{eq:fhigher}
\end{align}
The population saddle equation makes the quadratic coefficient centered:
\begin{equation}\label{eq:fsecond-expectation}
 \E f_N''(w_N)
 =\frac1{S_N}-N\int\frac1{(w_N-t)^2}\,\dd\mu_N(t)
 =\frac{N}{A_N^2}-\frac{N}{A_N^2}=0.
\end{equation}
The natural local scale is
\begin{equation}\label{eq:ellN}
 \ell_N=\alpha_NN^{-1/6},
\end{equation}
for which
\[
 \frac{w_N}{\ell_N}=N^{1/3}(1+o(1)),
 \qquad
 \frac{N\ell_N^3}{w_N^3}=1+o(1).
\]
This is the scale on which the third derivative contributes a unit
Airy cubic.


\subsection{Uniform local cubic expansion}


Define
\begin{equation}\label{eq:scaled-phase}
 \Phi_N(r;\bxi)
 =f_N(w_N+\ell_Nr;\bxi)-f_N(w_N;\bxi).
\end{equation}
The next lemma identifies the Airy phase uniformly over typical disorder
configurations.  The derivative estimates are included because they will
be used when the unshifted rays are displaced by $d\ell_N$.

\begin{lemma}\label{lem:cubic-expansion}
There are constants $c_0,c,C>0$ such that, on $\cG_N$ and uniformly for
$|r|\le N^{c_0}$,
\begin{equation}\label{eq:cubic-error}
 \bigg|\Phi_N(r;\bxi)-\frac{r^3}{3}\bigg|
 \le CN^{-c}(1+|r|^4).
\end{equation}
The first two derivatives satisfy
\begin{align}
 \left|\Phi_N'(r;\bxi)-r^2\right|
 &\le CN^{-c}(1+|r|^3),                                 \label{eq:cubic-error-1}\\
 \left|\Phi_N''(r;\bxi)-2r\right|
 &\le CN^{-c}(1+|r|^2).                                 \label{eq:cubic-error-2}
\end{align}
\end{lemma}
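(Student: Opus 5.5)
Since $f_N'(w_N;\bxi)=0$ by \eqref{eq:fprime-zero}, Taylor's theorem with integral remainder about $w_N$ gives
\[
 \Phi_N(r)=\frac{f_N''(w_N)\ell_N^2}{2}r^2+\frac{f_N^{(3)}(w_N)\ell_N^3}{6}r^3+\mathcal R_N(r),
\]
with $\mathcal R_N$ controlled by $\sup|f_N^{(4)}|\ell_N^4|r|^4/24$ on the segment $[w_N,w_N+\ell_Nr]$. The plan is to show three things, uniformly on $\cG_N$: the quadratic coefficient $f_N''(w_N)\ell_N^2$ is $O(N^{-c})$, the cubic coefficient $f_N^{(3)}(w_N)\ell_N^3/6$ equals $1/3+O(N^{-c})$, and $\ell_N^4\sup|f_N^{(4)}|=O(N^{-c})$ on the relevant region. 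The derivative bounds \eqref{eq:cubic-error-1}--\eqref{eq:cubic-error-2} follow from the same three estimates by differentiating the expansion once and twice; equivalently, one can use the integral remainders of orders $3$ and $2$ with the same fourth-derivative bound.

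\textbf{Quadratic coefficient.} By \eqref{eq:fsecond} and \eqref{eq:fsecond-expectation},
\[
 f_N''(w_N)=-\sum_j\bigl[(w_N-\xi_j)^{-2}-\E(w_N-\xi_1)^{-2}\bigr].
\]
Expand $(w_N-\xi_j)^{-2}=\sum_{\ell\le 5}(\ell+1)\xi_j^\ell w_N^{-\ell-2}+O(\tau_N^6w_N^{-8})$, which is valid since $|\xi_j|\le\tau_N\le w_N/2$. The $\ell=0$ term cancels. On $\cG_N$, each centered moment sum with $1\le \ell\le 5$ is at most $N\cdot N^{-1/2+\delta}$, so its contribution is $O(N^{1/2+\delta}w_N^{-3})$. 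The remainder contributes $O(N\tau_N^6w_N^{-8})$. Multiplying by $\ell_N^2=\alpha_N^2N^{-1/3}$ and using $w_N\asymp\alpha_NN^{1/6}$, the main term becomes $N^{1/2+\delta-1/3-1/2+o(1)}=N^{-1/3+\delta+o(1)}$. The remainder becomes $N^{1+6\gamma_0-4/3-1/3+o(1)}$, which is far smaller.

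\textbf{Cubic coefficient.} Here
\[
 f_N^{(3)}(w_N)\ell_N^3=2\ell_N^3\sum_j(w_N-\xi_j)^{-3}=2N\ell_N^3w_N^{-3}\bigl(1+O(\tau_N/w_N)\bigr).
\]
I need $N\ell_N^3/w_N^3=1+O(N^{-c})$, not merely $1+o(1)$. This follows from $N\ell_N^3=A_N^3$ together with \eqref{eq:w-asymptotic}, which gives $w_N/A_N=1+O(A_N^{-2})$. Moreover $\tau_N/w_N=N^{-1/8+o(1)}$, so the cubic coefficient is $1/3+O(N^{-c})$.

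\textbf{Fourth derivative.} For $|r|\le N^{c_0}$ with $c_0<1/3$ small (say $c_0=1/12$), every point $w=w_N+\ell_Nt$ with $|t|\le|r|$ satisfies $|\ell_Nt|\le N^{-1/6+c_0+o(1)}\ll w_N$. Hence $|w-\xi_j|\ge w_N/3$ for all $j$, the branches are analytic there, and the bound is deterministic given the support constraint:
\[
 |f_N^{(4)}(w)|=6\Bigl|\sum_j(w-\xi_j)^{-4}\Bigr|\le CNw_N^{-4}.
\]
Thus $\ell_N^4|f_N^{(4)}|\le CN\ell_N^4w_N^{-4}=C\ell_N/w_N=O(N^{-1/3+o(1)})$. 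The same applies to complex $r$, which will be needed when the rays are displaced by $d\ell_N$. Collecting terms, the error is bounded by $N^{-c}(|r|^2+|r|^3+|r|^4)\le CN^{-c}(1+|r|^4)$ with, for example, $c=1/4$, since $\delta<1/100$.

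\textbf{Main obstacle.} The only delicate point is the quadratic coefficient, because $f_N''(w_N)$ is random and of size $\sqrt N/w_N^3$ before scaling. It is essential that the population saddle equation \eqref{eq:population-saddle} cancels the deterministic $1/S_N$ term exactly. This is why $w_N$ is defined through $\mu_N$ and why the moment control in $\cG_N$ is required. Everything else is deterministic given $|\xi_j|\le\tau_N$.
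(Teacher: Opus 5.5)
Your proposal is correct and follows essentially the paper's argument: the exact cancellation from the population saddle equation plus the moment control on $\cG_N$ for the quadratic coefficient, a cubic coefficient equal to $1/3+O(N^{-c})$ via $N\ell_N^3=A_N^3$ and $w_N/A_N=1+O(A_N^{-2})$, and a fourth-order remainder of size $N\ell_N^4/w_N^4\asymp N^{-1/3}$. You differ only in minor ways: you use the integral remainder where the paper sums the geometric tail of the power series, and you bound the cubic coefficient deterministically rather than through $\cG_N$. That deterministic bound only gives a cubic-coefficient error $O(\tau_N/w_N)=N^{-1/8+o(1)}$, so your claimed exponent $c=1/4$ is not justified by your estimates; any fixed $c<1/8$ works, which suffices because the lemma only asserts some $c>0$.
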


\begin{proof}
Let $h=\ell_Nr$.  Since
\[
 \frac{|h|}{w_N-\tau_N}
 \le N^{c_0-1/3+o(1)},
\]
we may choose $c_0<1/12$ so that all logarithmic Taylor series below
are absolutely and uniformly convergent.

We first control the quadratic coefficient.  By
\eqref{eq:fsecond-expectation},
\begin{equation}\label{eq:fsecond-centered}
 f_N''(w_N)
 =N\int\frac1{(w_N-t)^2}\,\dd\mu_N(t)
 -\sum_{j=1}^N\frac1{(w_N-\xi_j)^2}.
\end{equation}
For $|t|\le\tau_N$, expand to order six:
\[
 \frac1{(w_N-t)^2}
 =\sum_{k=0}^{6}\frac{(k+1)t^k}{w_N^{k+2}}
 +O\left(\frac{|t|^7}{w_N^9}\right).
\]
The $k=0$ terms cancel in \eqref{eq:fsecond-centered}.  On $\cG_N$,
the empirical moment errors for $1\le k\le6$ give
\begin{equation}\label{eq:fsecond-bound}
 |f_N''(w_N)|
 \le C\frac{N^{1/2+\delta}}{w_N^3}
 +CN\frac{\tau_N^7}{w_N^9}.
\end{equation}
Multiplication by $\ell_N^2$ and the relation
$w_N=\alpha_NN^{1/6}(1+o(1))$ give
\begin{equation}\label{eq:quadratic-scaled-bound}
 |f_N''(w_N)|\ell_N^2
 \le N^{-1/3+\delta+o(1)}+N^{-13/24+o(1)}.
\end{equation}

Next, by \eqref{eq:fthird},
\begin{equation}\label{eq:cubic-coefficient}
 \frac{\ell_N^3}{6}f_N^{(3)}(w_N)
 =\frac{\alpha_N^3N^{-1/2}}3
 \sum_{j=1}^N\frac1{(w_N-\xi_j)^3}.
\end{equation}
Expanding $(w_N-t)^{-3}$ to order six and using $\cG_N$ yields
\begin{align*}
 \frac1N\sum_{j=1}^N\frac1{(w_N-\xi_j)^3}
 &=\int\frac1{(w_N-t)^3}\,\dd\mu_N(t)\\
 &\quad+O(N^{-1/2+\delta}w_N^{-4})
 +O(\tau_N^7w_N^{-10}).
\end{align*}
By \eqref{eq:truncated-moments},
\[
 \int\frac1{(w_N-t)^3}\,\dd\mu_N(t)
 =w_N^{-3}\left(1+O(w_N^{-2})+O(e^{-c\tau_N})\right).
\]
Together with \eqref{eq:w-asymptotic}, this gives
\begin{equation}\label{eq:cubic-coefficient-limit}
 \frac{\ell_N^3}{6}f_N^{(3)}(w_N)
 =\frac13+O(N^{-c})
\end{equation}
for some $c>0$.

It remains to bound the terms of order four and higher.  The distance
from $w_N$ to every pole is at least $w_N-\tau_N\asymp w_N$.
Hence, for $|h|<w_N-\tau_N$,
\begin{align}\label{eq:fourth-remainder}
 \bigg|
 \sum_{m=4}^{\infty}\frac{f_N^{(m)}(w_N)}{m!}h^m
 \bigg|
 \le
 CN\bigg(\frac{|h|}{w_N}\bigg)^4
 \frac1{1-|h|/(w_N-\tau_N)}\le CN^{-1/3}|r|^4;
\end{align}
Here we used $h=\ell_Nr$ in the last inequality. Let $\mathcal R_N(r)$ denote the series on the left with
$h=\ell_Nr$.  For $q=0,1,2$, termwise differentiation and the same
geometric series estimate give
\begin{equation}\label{eq:remainder-derivatives}
 |\mathcal R_N^{(q)}(r)|
 \le
 CN\bigg(\frac{\ell_N}{w_N}\bigg)^4
 \frac{1+|r|^{4-q}}
 {\left(1-\ell_N|r|/(w_N-\tau_N)\right)^{q+1}}
 \le CN^{-1/3}(1+|r|^{4-q}).
\end{equation}
The quadratic estimate \eqref{eq:quadratic-scaled-bound}, the cubic
estimate \eqref{eq:cubic-coefficient-limit}, and
\eqref{eq:fourth-remainder} prove \eqref{eq:cubic-error} after
decreasing $c_0$ and $c$ if necessary.  Differentiating the quadratic
and cubic terms and using \eqref{eq:remainder-derivatives} proves
\eqref{eq:cubic-error-1} and \eqref{eq:cubic-error-2}.
\end{proof}

The local expansion identifies the Airy scale.  For uniformity over
truncated configurations, the monotone decay required for the global contour
deformation is derived directly from the deterministic support bound.


\subsection{Unshifted steepest-descent rays}


For $t\ge0$, let
\begin{align*}
 C_1(t)&=w_N+t+{\rm i}t,& C_2(t)&=w_N+t-{\rm i}t,\\
 C_3(t)&=w_N-t+{\rm i}t,& C_4(t)&=w_N-t-{\rm i}t.
\end{align*}
The $w$-contour will ultimately be deformed to $C_1-C_2$, and the
$z$-contour to $C_3-C_4$.  The following exact derivative identities
give the required monotone decay.

\begin{lemma}\label{lem:global-decay}
Let $a_j=w_N-\xi_j$.  For $j=1,2$, define
\[
 g_j(t)=\Re\{f_N(C_j(t))-f_N(w_N)\},
\]
and for $j=3,4$ define
\[
 g_j(t)=-\Re\{f_N(C_j(t))-f_N(w_N)\}.
\]
Then
\begin{align}
 g_1'(t)=g_2'(t)
 &=-\sum_{k=1}^N
 \frac{2t^2}{a_k\{(a_k+t)^2+t^2\}},                    \label{eq:gplus-prime}\\
 g_3'(t)=g_4'(t)
 &=-\sum_{k=1}^N
 \frac{2t^2}{a_k\{(a_k-t)^2+t^2\}}.                    \label{eq:gminus-prime}
\end{align}
Consequently, uniformly over all truncated configurations (and hence on
$\cG_N$), there is a constant $c>0$ such that
\begin{align}
 g_j(t)&\le-c\frac{Nt^3}{w_N^3},
 &&0\le t\le\frac{w_N}{2},                             \label{eq:unshifted-local-decay}\\
 g_j(t)&\le-cN-c\frac{N}{w_N}\Big(t-\frac{w_N}{2}\Big),
 &&\frac{w_N}{2}\le t<\infty.                                  \label{eq:unshifted-far-decay}
\end{align}
\end{lemma}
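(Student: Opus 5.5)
The plan is to compute the derivatives of the four functions $g_j$ exactly from the phase \eqref{eq:phase} and the saddle identity \eqref{eq:fprime-zero}, and then to integrate two-sided bounds on the summands. The only structural input is the support bound $a_k=w_N-\xi_k\in[w_N-\tau_N,w_N+\tau_N]$ with $\tau_N=o(w_N)$. Since $a_k\asymp w_N$ for every truncated configuration, all resulting constants are uniform over configurations, and in particular over $\cG_N$.

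\emph{Step 1 (derivative identities).} First I will note that $\Re f_N$ is single valued, since $\Re\log(w-\xi_j)=\log|w-\xi_j|$. Hence $g_j$ is well defined and differentiable along each ray, with $g_j(0)=0$. Moreover $g_j'(t)=\pm\Re\{f_N'(C_j(t))\,C_j'(t)\}$. Differentiating \eqref{eq:phase} gives $f_N'(w)=(w-v_N)/S_N+\sum_k(w-\xi_k)^{-1}$. The relation $f_N'(w_N)=0$ lets me subtract the value at $w_N$. Writing $h=w-w_N$, this gives
\[
 f_N'(w)=\frac{h}{S_N}-\sum_{k=1}^N\frac{h}{a_k(a_k+h)}.
\]
On $C_1$ we have $h=t(1+{\rm i})$ and $C_1'=1+{\rm i}$, so $hC_1'=2{\rm i}t$. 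The Gaussian term $2{\rm i}t/S_N$ is then purely imaginary and drops out. Each summand contributes $-\Re\{2{\rm i}t/(a_k(a_k+t+{\rm i}t))\}=-2t^2/(a_k\{(a_k+t)^2+t^2\})$. This yields \eqref{eq:gplus-prime} for $g_1$, and $C_2$ is the complex conjugate case. On $C_3$ we have $h=t(-1+{\rm i})$ and $hC_3'=-2{\rm i}t$. The Gaussian term again vanishes, and $\Re\{f_N'C_3'\}=\sum_k2t^2/(a_k\{(a_k-t)^2+t^2\})$. The sign convention $g_3=-\Re\{\cdot\}$ then gives \eqref{eq:gminus-prime}, and $C_4$ is again the conjugate case. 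The denominators never vanish for $t>0$ because of the $t^2$ term. The cancellation of the Gaussian part is the point of choosing $45^\circ$ rays.

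\emph{Step 2 (local decay).} For $0\le t\le w_N/2$, I will use $a_k\le 2w_N$ and $(a_k\pm t)^2+t^2\le Cw_N^2$. Each summand is then at least $c\,t^2/w_N^3$, so $g_j'(t)\le -cNt^2/w_N^3$. Integrating from $0$ gives \eqref{eq:unshifted-local-decay}.

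\emph{Step 3 (far decay).} For $t\ge w_N/2$, both $(a_k+t)^2+t^2$ and $(a_k-t)^2+t^2$ are at most $Ct^2$, because $a_k\le 2w_N\le 4t$. Each summand is therefore at least $2/(Ca_k)\ge c/w_N$, so $g_j'(t)\le -cN/w_N$. Integrating from $w_N/2$ and using $g_j(w_N/2)\le -cN/8$ from Step~2 gives \eqref{eq:unshifted-far-decay}.

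There is no genuine obstacle here; the delicate point is only bookkeeping. One must check that the quadratic Gaussian term is exactly imaginary on all four rays. One must also check that the branch issues are harmless, which holds because only real parts enter.
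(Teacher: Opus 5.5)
Your proof is correct and follows essentially the same route as the paper. The paper differentiates $\Re f_N(C_j(t))$ directly in real coordinates and subtracts the vanishing value at $t=0$ term by term, which is the same computation as your complex chain-rule version with $f_N'(w_N)=0$ subtracted; your Steps~2 and~3 are exactly the paper's integration of the two-sided bounds on the summands.
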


\begin{proof}
We prove the formulas on the upper rays; the lower rays are their
complex conjugates.  For $C_1$, differentiating the real part of
\eqref{eq:phase} gives
\[
 g_1'(t)=\frac{w_N-v_N}{S_N}
 +\sum_{k=1}^N
 \frac{a_k+2t}{(a_k+t)^2+t^2}.
\]
At $t=0$, this vanishes by \eqref{eq:fprime-zero}.  Subtracting the
value at zero term by term,
\[
 \frac{a_k+2t}{(a_k+t)^2+t^2}-\frac1{a_k}
 =-\frac{2t^2}{a_k\{(a_k+t)^2+t^2\}},
\]
which proves \eqref{eq:gplus-prime}.

For $C_3$, differentiation first gives
\[
 \frac{\dd}{\dd t}\Re f_N(C_3(t))
 =-\frac{w_N-v_N}{S_N}
 +\sum_{k=1}^N
 \frac{-a_k+2t}{(a_k-t)^2+t^2}.
\]
Taking the negative and again using \eqref{eq:fprime-zero} gives
\[
 -\frac1{a_k}+\frac{a_k-2t}{(a_k-t)^2+t^2}
 =-\frac{2t^2}{a_k\{(a_k-t)^2+t^2\}},
\]
which proves \eqref{eq:gminus-prime}.  In particular, the denominator
on the left rays contains $(a_k-t)^2$, so no sign change occurs when
the ray passes to the left of a pole.

Since $|\xi_k|\le\tau_N=o(w_N)$, one has $a_k\asymp w_N$ uniformly
over all truncated configurations.  If $0\le t\le w_N/2$, every
denominator in \eqref{eq:gplus-prime}--\eqref{eq:gminus-prime} is at
most $Cw_N^3$.  Thus
\[
 g_j'(t)\le-c\frac{Nt^2}{w_N^3},
\]
and integration from zero proves \eqref{eq:unshifted-local-decay}.
If $t\ge w_N/2$, then
\[
 (a_k\pm t)^2+t^2\le C t^2,
\]
so each summand in absolute value is at least $c/w_N$.  Integrating
from $w_N/2$, where \eqref{eq:unshifted-local-decay} already gives a
$-cN$ bound, proves \eqref{eq:unshifted-far-decay}.
\end{proof}

\begin{remark}\label{rem:unshifted-rays}
The rays $C_1,\ldots,C_4$ are used only to establish monotonicity and
decay of the phase.  The actual integration contours will be displaced
by $d\ell_N$ to keep the Cauchy factor $(w-z)^{-1}$ uniformly
nonsingular.  The complete deformation, including the closing segments
at infinity, is proved in
\Cref{prop:scaled-contour-representation} after the corresponding
decay estimates have been established.
\end{remark}

With the conditional determinant, the local Airy phase, and the global
ray decay in place, we can now compare the finite-$N$ kernel with the
Airy kernel in trace norm.


\section{Airy approximation on a logarithmic window}\label{sec:airy}


We strengthen Johansson's compact-window asymptotics~\cite{Johansson2007} to a
trace-norm estimate on the logarithmic window required for moderate
deviations.  The argument has four steps.  We begin by rescaling and conjugating the kernel, which
leaves every Fredholm minor unchanged. Next, we separate the two contours so that the Cauchy factor
admits a convergent Laplace representation. The resulting kernel is then written as a product of two
Hilbert--Schmidt operators. Finally, the finite-$N$ factors are compared
with their Airy limits. This factorization  reduces the kernel estimate to four tractable
Hilbert--Schmidt estimates.


\subsection{Rescaling and separated contours}


The first step is purely algebraic.  We express the spectral argument on the
Airy scale about $v_N$ and choose a conjugation that removes all constant
terms in the two spatial variables.  We  then apply this conjugation term by
term in the finite Fredholm series \eqref{eq:Fredholm-series}.

For $x,y\in\R$, set
\[
 u_x=v_N+\frac{\alpha_Nx}{\sqrt N},
 \qquad
 u_y=v_N+\frac{\alpha_Ny}{\sqrt N}.
\]
Define
\begin{equation}\label{eq:QN}
 Q_N(x)=\frac{u_x^2}{2S_N}
 -\frac{N^{1/6}w_N}{\alpha_N}x
\end{equation}
and the rescaled, conjugated kernel
\begin{equation}\label{eq:conjugated-kernel}
 \widetilde K_N(x,y;\bxi)
 =\frac{\alpha_N}{\sqrt N}
 e^{Q_N(x)-Q_N(y)}K_N(u_x,u_y;\bxi).
\end{equation}
For every $k\le N$, cancellation of the row and column factors gives
\[
 \det\bigl(\widetilde K_N(x_i,x_j)\bigr)_{i,j=1}^k
 =\left(\frac{\alpha_N}{\sqrt N}\right)^k
 \det\bigl(K_N(u_{x_i},u_{x_j})\bigr)_{i,j=1}^k,
\]
because the product of the row factors
$e^{Q_N(x_i)}$ cancels the product of the column factors
$e^{-Q_N(x_j)}$.  The change of variables
$u_{x_i}=v_N+\alpha_Nx_i/\sqrt N$ supplies the corresponding Jacobian factor in the
$k$-fold integral.  Hence the two Fredholm series agree term by term.
In particular, a trace-norm comparison for $\widetilde K_N$ gives a
comparison for the original conditional Fredholm determinant.

We next choose the contour geometry.  The $w$-contour is shifted to the
right of the saddle and the $z$-contour to the left.  This separation is
what will later make $(w-z)^{-1}$ uniformly nonsingular and allow the
Cauchy factor to be written as a Laplace integral.

Fix $d\in(0,1/10)$.  In the scaled variables
$w=w_N+\ell_Nr$ and $z=w_N+\ell_Ns$, use the separated contours
\begin{equation}\label{eq:shifted-contours-def}
\begin{aligned}
 \Gamma^\pm&=\{d+t(1\pm {\rm i}):t\ge0\},\\
 \Sigma^\pm&=\{-d-t\pm {\rm i}t:t\ge0\},
\end{aligned}
\end{equation}
each ray being parametrized from its finite endpoint towards infinity.  Put
$\Gamma=\Gamma^+-\Gamma^-$ and $\Sigma=\Sigma^+-\Sigma^-$.  Thus both
oriented contours run from their lower infinite ray through the finite
endpoint to their upper infinite ray.  The geometry and orientation are shown in \Cref{fig:contours}.  Then
\begin{equation}\label{eq:contour-separation}
 \Re(r-s)\ge2d,
 \qquad r\in\Gamma,
 \quad s\in\Sigma.
\end{equation}

\begin{figure}[t]
\centering
\begin{tikzpicture}[x=1cm,y=1cm]
  \fill[gray!12] (-0.72,-2.15) rectangle (0.72,2.15);
  \draw[->] (-4.0,0) -- (4.05,0) node[right] {$\Re$};
  \draw[->] (0,-2.55) -- (0,2.55) node[above] {$\Im$};
  \draw[dashed] (-0.72,-2.15) -- (-0.72,2.15);
  \draw[dashed] (0.72,-2.15) -- (0.72,2.15);
  \node[above] at (-0.72,2.15) {$-d$};
  \node[above] at (0.72,2.15) {$d$};
  \draw[<->] (-0.72,-2.32) -- (0.72,-2.32);
  \node[below] at (0,-2.32) {$2d$};

  \fill (0,0) circle (1.6pt);
  \node[anchor=north,font=\small] at (0,-0.14) {saddle $0$};

  \draw[very thick,blue!70!black,->] (0.72,0) -- (2.85,2.13);
  \draw[very thick,blue!70!black,->] (2.85,-2.13) -- (0.72,0);
  \node[blue!70!black] at (3.25,2.18) {$\Gamma^+$};
  \node[blue!70!black] at (3.25,-2.18) {$\Gamma^-$};

  \draw[very thick,red!70!black,->] (-0.72,0) -- (-2.85,2.13);
  \draw[very thick,red!70!black,->] (-2.85,-2.13) -- (-0.72,0);
  \node[red!70!black] at (-3.25,2.18) {$\Sigma^+$};
  \node[red!70!black] at (-3.25,-2.18) {$\Sigma^-$};

  \draw[gray,->] (-3.75,-0.72) -- (-2.85,-0.28);
  \node[gray,align=center,font=\small] at (-3.32,-1.28)
    {scaled poles lie\\far to the left};
\end{tikzpicture}
\caption{\scriptsize Separated Airy contours in the scaled complex plane.  Both contours are oriented from the lower ray to the upper ray.  The $r$-contour $\Gamma$ lies to the right of the saddle and the $s$-contour $\Sigma$ lies to the left.  Their separation gives $\Re(r-s)\ge2d$, which makes the Laplace representation of the Cauchy factor absolutely convergent.  The scaled poles $(\xi_j-w_N)/\ell_N$ remain at distance $N^{1/3+o(1)}$ to the left.}
\label{fig:contours}
\end{figure}
To verify the rescaling, write $u_x=v_N+\alpha_Nx/\sqrt N$ and
$u_y=v_N+\alpha_Ny/\sqrt N$.  The exponent in
\eqref{eq:Johansson-kernel} is exactly
\begin{align*}
 &f_N(w)-f_N(z)
 -\frac{N^{1/6}}{\alpha_N}yw
 +\frac{N^{1/6}}{\alpha_N}xz
 +\frac{u_y^2-u_x^2}{2S_N}.
\end{align*}
After $w=w_N+\ell_Nr$ and $z=w_N+\ell_Ns$, the constant terms in $x,y$
are cancelled by $Q_N(x)-Q_N(y)$.  In addition, note that
\[
 \frac{\alpha_N}{\sqrt N}\frac1{S_N}
 \frac{\ell_N^2}{\ell_N}=1,
\]
thus no additional scalar prefactor remains.

This algebra yields the desired finite-$N$ double-contour representation
on $\Gamma$ and $\Sigma$. Specifically, for all $x,y\in\R$,
\begin{equation}\label{eq:scaled-double-contour}
 \widetilde K_N(x,y;\bxi)
 =\frac1{(2\pi {\rm i})^2}
 \int_\Sigma\dd s\int_\Gamma\dd r\,
 \frac{e^{\Phi_N(r)-yr-\Phi_N(s)+xs}}{r-s},
\end{equation}
where $\Phi_N(r)$ is defined by \eqref{eq:scaled-phase}. The deformation from Johansson's original
contours $(\gamma,\Gamma)$ in \eqref{eq:Johansson-kernel} to the separated rays $(\Sigma,\Gamma)$ still has to be justified;
we postpone that analytic step to \Cref{prop:scaled-contour-representation}, after the required global
decay has been proved.  The target Airy kernel has the parallel representation
\begin{equation}\label{eq:Airy-double-contour}
 K_{\Ai}(x,y)
 =\frac1{(2\pi {\rm i})^2}
 \int_\Sigma\dd s\int_\Gamma\dd r\,
 \frac{e^{r^3/3-yr-s^3/3+xs}}{r-s}.
\end{equation}
For completeness, insert the standard contour formulas in Olver et
al.~\cite[Section~9.5]{OlverEtAl2010}
\[
 \Ai(y+\lambda)=\frac1{2\pi {\rm i}}\int_\Gamma
 e^{r^3/3-(y+\lambda)r}\,\dd r,
 \qquad
 \Ai(x+\lambda)=\frac1{2\pi {\rm i}}\int_\Sigma
 e^{-s^3/3+(x+\lambda)s}\,\dd s.
\]
Since the Airy integrands are entire and decay on the closing arcs
inside the relevant Stokes sectors, Cauchy's theorem deforms the standard
Airy contours to the shifted rays $\Gamma$ and $\Sigma$.
The second formula is obtained from the first by the substitution $r=-s$;
the minus sign in $\dd r=-\dd s$ reverses the image contour and produces
the orientation assigned to $\Sigma$.  Because $\Re(r-s)\ge2d$, Fubini's
theorem and
$\int_0^\infty e^{-\lambda(r-s)}\dd\lambda=(r-s)^{-1}$ give
\eqref{eq:Airy-double-contour} from
$K_{\Ai}(x,y)=\int_0^\infty\Ai(x+\lambda)\Ai(y+\lambda)\dd\lambda$.


\subsection{Decay on the separated contours}


This subsection constitutes the analytical core of our approach.
On the local contour scale we need more than decay:  a quantitative comparison
between $e^{\pm\Phi_N}$ and $e^{\pm\zeta^3/3}$.  On the mesoscopic and
far scales, decay alone is enough, provided it is uniform on $\cG_N$ and
strong enough to absorb spatial variables in the logarithmic window.

Let
\begin{equation}\label{eq:TNstar}
 T_N^*=\frac{w_N}{2\ell_N}=\frac12N^{1/3}(1+o(1)).
\end{equation}
The finite-$N$ phase has cubic decay only up to this scale.  Beyond it, the
correct global estimate is linear in the contour parameter.  The next lemma
transfers the unshifted decay to the separated contours.

\begin{lemma}
\label{lem:shifted-contours}
There exist constants $c,C,\eta>0$ such that, uniformly for
$\bxi\in\cG_N$, the following statements hold for all sufficiently large
$N$.

If $r=d+t(1\pm {\rm i})\in\Gamma^\pm$, then
\begin{align}
 \Re\Phi_N(r)
 &\le C-ct^3,
 &&0\le t\le T_N^*,                                    \label{eq:shifted-r-local}\\
 \Re\Phi_N(r)
 &\le-cN-cN^{2/3}(t-T_N^*),
 &&T_N^*\le t<\infty.                                          \label{eq:shifted-r-far}
\end{align}

If $s=-d-t\pm {\rm i}t\in\Sigma^\pm$, then
\begin{align}
 -\Re\Phi_N(s)
 &\le C-ct^3,
 &&0\le t\le T_N^*,                                    \label{eq:shifted-s-local}\\
 -\Re\Phi_N(s)
 &\le-cN-cN^{2/3}(t-T_N^*),
 &&T_N^*\le t<\infty.                                          \label{eq:shifted-s-far}
\end{align}

On the local pieces $0\le t\le N^\eta$,
\begin{align}
 \Big|e^{\Phi_N(r)}-e^{r^3/3}\Big|
 \le CN^{-c}(1+t^4)e^{-ct^3},                          \label{eq:shifted-r-comparison}\\
 \Big|e^{-\Phi_N(s)}-e^{-s^3/3}\Big|
 \le CN^{-c}(1+t^4)e^{-ct^3}.                          \label{eq:shifted-s-comparison}
\end{align}

Moreover, after decreasing $\eta$ if necessary, for every fixed integer
$m\ge0$ there is a constant $C_m>0$ such that, whenever
$L^3\le c\log N$, uniformly for $0\le q\le L$,
$\rho\ge0$, and either choice of sign,
\begin{align}
 &\int_{N^\eta}^{\infty}(1+t^m)
 \exp\!\left\{\Re\Phi_N(d+t(1\pm {\rm i}))+q(d+t)-\rho(d+t)\right\}\dd t
 \le C_m e^{-cN^{3\eta}}e^{-d\rho},  \label{eq:r-tail-integral}\\
 &\int_{N^\eta}^{\infty}(1+t^m)
 \exp\!\left\{-\Re\Phi_N(-d-t\pm {\rm i}t)+q(d+t)-\rho(d+t)\right\}\dd t
 \le C_m e^{-cN^{3\eta}}e^{-d\rho}. \label{eq:s-tail-integral}
\end{align}

The same estimates hold with $\Phi_N(\zeta)$ replaced by $\zeta^3/3$.  For a
nonnegative spatial variable (denoted by $x$ on $\Sigma$ and by $y$ on
$\Gamma$), the corresponding estimates hold after extracting $e^{-dx}$ or
$e^{-dy}$, respectively, and omitting the positive term $q(d+t)$.
\end{lemma}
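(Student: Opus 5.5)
The plan is to transfer the unshifted ray estimates of \Cref{lem:global-decay} to the displaced rays by a short horizontal translation, and to use \Cref{lem:cubic-expansion} on the local piece. Since $\Phi_N(r)=f_N(w_N+\ell_Nr)-f_N(w_N)$, a point $r=d+t(1\pm{\rm i})\in\Gamma^\pm$ corresponds to $w=C_{1,2}(\ell_Nt)+d\ell_N$. Write $\Re\Phi_N(d+t(1\pm{\rm i}))=\Re\Phi_N(t(1\pm{\rm i}))+\int_0^d\Re\Phi_N'(\sigma+t(1\pm{\rm i}))\,\dd\sigma$. The first term equals $g_{1,2}(\ell_Nt)$. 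By \eqref{eq:unshifted-local-decay} and $N\ell_N^3/w_N^3=1+o(1)$, this is at most $-ct^3$ for $t\le T_N^*$. For $t\ge T_N^*$, \eqref{eq:unshifted-far-decay} gives at most $-cN-cN^{2/3}(t-T_N^*)$, because $N\ell_N/w_N=N^{2/3}(1+o(1))$.

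The horizontal correction has to be bounded separately in each regime. For $t\le N^{c_0}$, use \eqref{eq:cubic-error-1}: $|\Phi_N'(\zeta)|\le C(1+|\zeta|^2)$, so the correction is $O(d(1+t^2))$, which is absorbed into $C-ct^3$ after halving $c$. For $N^{c_0}\le t$, bound $\ell_N f_N'(w)$ directly. We have $f_N'(w)=(w-v_N)/S_N+\sum_j(w-\xi_j)^{-1}$, and $|v_N-w_N|\le Cw_N$ by \eqref{eq:vN-bounds}. Hence $\ell_N|f_N'(w)|\le C\ell_N(N|w-w_N|/A_N^2+N/w_N)$ for $w$ at distance $\gtrsim w_N$ from the poles. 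In scaled variables this is $O(t^2)$ for $t\le T_N^*$ and $O(N^{2/3}+t\,N^{1/3})$ beyond. That is dominated by the cubic term in the first range, and in the far range by $cN$, since $t\,N^{1/3}\cdot d$ is smaller than $cN^{2/3}t$. This argument needs the poles to stay away: on $\Gamma$ they lie to the left, at scaled distance $N^{1/3}$. The $\Sigma$ case is the main obstacle. There the rays pass to the left of the scaled poles (which sit near $-w_N/\ell_N\approx-2T_N^*$) when $t\gtrsim 2T_N^*$, and the imaginary part is then $t\gtrsim N^{1/3}$. So $|w-\xi_j|\ge\ell_N t$, and $\sum_j|w-\xi_j|^{-1}\le N/(\ell_Nt)$ is still harmless. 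I will treat the crossing region carefully using the imaginary part $\ell_N t\asymp w_N$, which keeps the logarithmic terms bounded. The signs for $-\Re\Phi_N$ on $\Sigma$ come from \eqref{eq:gminus-prime} in the same way. Instead of a translation, the appendix may simply redo the derivative identity of \Cref{lem:global-decay} along the shifted rays, with a perturbation of order $d\ell_N$ in $a_k$. I would fall back on that if the translation bookkeeping becomes awkward.

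For the comparisons \eqref{eq:shifted-r-comparison}--\eqref{eq:shifted-s-comparison}, take $\eta<c_0$ and $t\le N^\eta$. Then $|r|\le C(1+t)$, and \eqref{eq:cubic-error} gives $|\Phi_N(r)-r^3/3|\le CN^{-c}(1+t^4)\le CN^{-c+4\eta}$, which is $o(1)$ once $\eta$ is small. Apply $|e^a-e^b|\le|a-b|e^{\max(\Re a,\Re b)}$, with $\Re r^3/3\le C-ct^3$ on the ray (a direct computation) and $\Re\Phi_N(r)\le C-ct^3$ from above. For the tail integrals \eqref{eq:r-tail-integral}--\eqref{eq:s-tail-integral}, write $e^{-\rho(d+t)}\le e^{-d\rho}$. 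On $N^\eta\le t\le T_N^*$, the exponent is at most $C-ct^3+L(d+t)$. Since $L\le(c\log N)^{1/3}=o(N^{\eta})$, this is at most $-\tfrac c2t^3$, and integrating $(1+t^m)e^{-ct^3/2}$ from $N^\eta$ gives $C_me^{-cN^{3\eta}}$. On $t\ge T_N^*$, the linear decay $-cN-cN^{2/3}(t-T_N^*)$ beats $Lt$, and the contribution is $e^{-cN}$, which is even smaller. The statements for $\zeta^3/3$ follow from the same explicit computation. The variants with a nonnegative spatial variable follow by the same estimate with $\rho=x$ or $y$ and $q=0$.
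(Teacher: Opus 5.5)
Your architecture (unshifted decay from \Cref{lem:global-decay}, a horizontal correction of length $d$, the cubic expansion on the local window, and a split of the tail integrals at $T_N^*$) is the paper's. Your local window, far range $t\ge T_N^*$, comparison estimates and tail-integral bookkeeping are fine.

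The gap is the horizontal correction on the mesoscopic range $N^{c_0}\le t\le T_N^*$. The bound you write there comes from the triangle inequality and $|v_N-w_N|\le Cw_N$. In scaled variables it only gives
\[
 \ell_N\bigl|f_N'(w_N+\ell_N\zeta)\bigr|\le C\bigl(N^{2/3}+N^{1/3}|\zeta|\bigr),
\]
not $O(|\zeta|^2)$. After scaling, $\ell_N(w_N-v_N)/S_N$ and $\ell_N\sum_j(w_N-\xi_j)^{-1}$ are each of size $N^{2/3}$, and only their difference is small. With this bound the correction is of order $dN^{2/3}$, which $-ct^3$ absorbs only for $t\gg N^{2/9}$. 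Since $c_0<1/12$, the bounds \eqref{eq:shifted-r-local} and \eqref{eq:shifted-s-local}, and hence the mesoscopic part of \eqref{eq:r-tail-integral}--\eqref{eq:s-tail-integral}, remain unproved for $N^{c_0}\le t\lesssim N^{2/9}$. The crude bound is adequate only for $t\ge T_N^*$, where the paper also uses it.

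The missing ingredient is the saddle cancellation. Since $f_N'(w_N)=0$, the paper writes
\[
 \ell_Nf_N'(w_N+\ell_N\zeta)=\ell_N^2f_N''(w_N)\,\zeta+\ell_N^3\zeta^2\int_0^1(1-v)\,f_N^{(3)}(w_N+v\ell_N\zeta)\,\dd v .
\]
On $\cG_N$ it uses $\ell_N^2|f_N''(w_N)|\le CN^{-c}$ from \eqref{eq:quadratic-scaled-bound}. It also uses $|f_N^{(3)}|\le CN/w_N^3$ whenever $\ell_N|\zeta|\le\frac34w_N$, since the intermediate points then stay at distance $\ge cw_N$ from the poles. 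This gives $\ell_N|f_N'|\le C(1+|\zeta|^2)$ for $|\zeta|\le\frac32T_N^*$. The correction is then $C(1+t^2)$ and is absorbed by $-ct^3$ for $t\ge N^\eta$. This is exactly where $\cG_N$ is needed: for an arbitrary truncated configuration, $\ell_N^2|f_N''(w_N)|$ can be of order $N^{5/24+o(1)}$.

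Your fallback would also close the gap, but note that on the shifted ray the base-point term no longer vanishes. Repeating the computation of \Cref{lem:global-decay} gives
\[
 \frac{\dd}{\dd t}\Re\Phi_N\bigl(d+t(1\pm{\rm i})\bigr)=\Phi_N'(d)-\sum_{k=1}^N\frac{2\ell_N^3t^2}{a_k'\{(a_k'+\ell_Nt)^2+\ell_N^2t^2\}},\qquad a_k'=w_N+d\ell_N-\xi_k .
\]
The analogue holds for $-\Re\Phi_N(-d-t\pm{\rm i}t)$, with $\Phi_N'(-d)$, $a_k''=w_N-d\ell_N-\xi_k$, and $(a_k''-\ell_Nt)^2$ in the denominator. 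The constant satisfies $\Phi_N'(\pm d)=d^2+O(N^{-c})$ by \eqref{eq:cubic-error-1}, which carries the same cancellation. The sum is $\ge ct^2$ for $t\le T_N^*$ and $\ge cN^{2/3}$ beyond. So integrating from $t=0$, with the initial value controlled by \eqref{eq:cubic-error}, yields both the cubic and the linear decay directly, without horizontal transport.
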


\begin{proof}[Sketch of proof]
The argument proceeds in four steps, treating the local, mesoscopic, and far
ranges of the contour parameter separately, and then assembling the uniform
tail integrals. For readability, we present only an outline of the proof here; see \Cref{app:shifted-contours}
for the full details.

\smallskip
\noindent{\it Step~1. Local window: $0\le t\le N^{\eta}$.}\
On this range the cubic expansion of \Cref{lem:cubic-expansion} is valid.
The exact identity $\Re((d+t+{\rm i}t)^3/3)=d^3/3+d^2t-2t^3/3$ yields the cubic decay
\eqref{eq:shifted-r-local}, while the uniform error bound
$|\Phi_N(r)-r^3/3|\le CN^{-c}(1+t^4)$ gives the pointwise comparison
\eqref{eq:shifted-r-comparison}.  The same computation on $\Sigma$ produces
\eqref{eq:shifted-s-local} and \eqref{eq:shifted-s-comparison}.

\smallskip
\noindent{\it Step~2. Mesoscopic range: $N^{\eta}\le t\le T_N^*$.}\
Here the cubic expansion is no longer accurate, but the unshifted rays
$C_1,\dots,C_4$ already carry strong decay by \Cref{lem:global-decay}.
The horizontal displacement by $d\ell_N$ is controlled by Taylor's formula
for $f_N'$:
\[
 \ell_N f_N'(w_N+\ell_N(\zeta+u))
 =\ell_N^2f_N''(w_N)(\zeta+u)
 +\ell_N^3(\zeta+u)^2\!\int_0^1\!(1-v)f_N^{(3)}(\cdot)\,\dd v,
\]
where $|\ell_N^2f_N''(w_N)|\le CN^{-c}$ and $|f_N^{(3)}|\le CN/w_N^3$.
Integrating horizontally from $r_0(t)$ to $r(t)=r_0(t)+d$ changes the real
part by at most $C(1+t^2)$, which is absorbed by the unshifted cubic decay
$\Re\Phi_N(r_0(t))\le-ct^3$ because $t\ge N^{\eta}$.

\smallskip
\noindent{\it Step~3. Far range: $t\ge T_N^*$.}\
Beyond the cubic decay scale, the unshifted phase has switched to linear
decay \eqref{eq:unshifted-far-decay}.  A direct estimate from
\eqref{eq:phase} shows that $|\ell_N f_N'|\le C(N^{2/3}+N^{1/3}t)$ along the
horizontal segment, so the displacement changes the real part by at most
$CN^{2/3}+CN^{1/3}h$ (with $t=T_N^*+h$).  Both terms are absorbed by the
terms $-cN-cN^{2/3}h$ in \eqref{eq:unshifted-far-decay}.

\smallskip
\noindent{\it Step~4. Uniform tail integrals.}\
The integrals \eqref{eq:r-tail-integral}--\eqref{eq:s-tail-integral} are
treated by splitting at $T_N^*$.  On $[N^{\eta},T_N^*]$ the cubic decay
$-ct^3$ dominates the spatial factor $q(d+t)$ because $L=o(N^{2\eta})$, whereas
on $[T_N^*,\infty)$ the linear decay $-cN-cN^{2/3}h$ dominates $L(d+T_N^*+h)$
because $LT_N^*=o(N)$ and $Lh=o(N^{2/3}h)$.  The Airy counterparts are simpler
since cubic decay holds for all $t$.
\end{proof}

The above estimates are now strong enough to justify the exact contour
representation on the separated paths.  The proof proceeds by two
independent truncations: first the closed $z$-contour is opened into two
rays, then the vertical $w$-contour is deformed into the two right rays.
Separating the two limits makes it clear that neither homotopy crosses the
Cauchy pole $w=z$.

\begin{proposition}\label{prop:scaled-contour-representation}
Fix $d\in(0,1/10)$ and let $L_N\ge0$ satisfy $L_N=o(N^{2/3})$.  For all sufficiently
large $N$, every $\bxi\in\cG_N$, and all $x,y\in[-L_N,\infty)$, the
rescaled kernel $\widetilde{K}_N$ defined by \eqref{eq:conjugated-kernel} has the exact representation \eqref{eq:scaled-double-contour}.
For each fixed $x,y$ the finite closing homotopies below are justified
pointwise.  The final ray integrals are absolutely convergent after the
factorization in \eqref{eq:Cauchy-factor}, and their estimates are uniform
for $x,y\in[-L_N,\infty)$ in the ranges used later.
\end{proposition}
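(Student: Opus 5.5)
The plan is to start from Johansson's representation \eqref{eq:Johansson-kernel}, with $\gamma$ a small closed contour around the poles $\xi_j\in[-\tau_N,\tau_N]$ and $\Gamma$ a vertical line. We make the change of variables $w=w_N+\ell_Nr$, $z=w_N+\ell_Ns$ and the conjugation \eqref{eq:conjugated-kernel}. The algebra displayed before \eqref{eq:scaled-double-contour} then turns the integrand into
\[
 \frac{e^{\Phi_N(r)-yr-\Phi_N(s)+xs}}{r-s},
\]
on the image contours. In the $s$-variable the image of $\gamma$ encircles the scaled poles, which lie at real part $\le -(w_N-\tau_N)/\ell_N=-N^{1/3+o(1)}$. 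The image of the vertical $w$-line can be placed at $\Re r=d$ from the start, since it only needs to be to the right of $\gamma$. The proof then consists of two successive deformations, carried out separately for fixed $x,y$ so that the Cauchy pole $r=s$ is never crossed.

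\emph{Step 1: opening the $z$-contour.} Choose $\gamma$ inside the half-plane $\Re s<-d$; this is possible because all poles lie far to the left. For $R$ large, consider the closed curve formed by the two rays $\Sigma^\pm$ truncated at $|s|\approx R$, together with the arc of $|s|=R$ in the left half-plane joining their endpoints. This curve encloses all poles, and $\gamma$ is homotopic to it within the region where $s\mapsto e^{-\Phi_N(s)+xs}(r-s)^{-1}$ is analytic for each fixed $r$ with $\Re r\ge d$. The poles of $e^{-\Phi_N}$ are exactly the $\xi_j$, via the product in \eqref{eq:phase-difference-product}, and these stay inside.

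We must show that the arc contribution vanishes as $R\to\infty$. On that arc, for $\Re s\le-d$ and $|\arg(-s)|\le\pi/4$, we have $-\Re\Phi_N(s)\approx-\Re s^2\,\ell_N^2/(2S_N)$ plus lower-order terms. The quadratic term $\ell_N^2 s^2/(2S_N)$ has $\Re s^2>0$ in the left sector $|\arg(-s)|<\pi/4$, so $e^{-\Phi_N(s)}$ decays like a Gaussian there. The logarithmic terms contribute only $N\log|s|$, and $xs$ is linear, so the arc contribution is $o(1)$ for fixed $x$.

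Here lies a subtlety. The rays $\Sigma^\pm$ have argument exactly $\pm3\pi/4$, which is on the boundary where $\Re s^2=0$. I would therefore close using a slightly steeper auxiliary ray, or use the far-range linear decay \eqref{eq:shifted-s-far}. The latter gives decay $-cN^{2/3}t$, which beats $|x|(d+t)$ whenever $|x|\le L_N=o(N^{2/3})$. This is exactly where the hypothesis on $L_N$ enters. With the linear decay along the ray, the arc can be taken as a short segment on which the integrand is bounded by the ray bound, and its length factor is polynomial in $R$.

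\emph{Step 2: deforming the $w$-contour.} Now fix $s\in\Sigma$. The integrand in $r$ is entire except at $r=s$, and $\Re s\le-d<d\le\Re r$, so no pole lies between the vertical line $\Re r=d$ and $\Gamma^+\cup\Gamma^-$. We close with segments at height $\pm R$. In the sector between the vertical line and the rays at angle $\pi/4$, the term $\ell_N^2r^2/(2S_N)$ has $\Re r^2\le0$. On the horizontal segments the real part is controlled by the same Taylor and horizontal-displacement argument as in Step 3 of \Cref{lem:shifted-contours}. That argument gives $\Re\Phi_N\le -cN^{2/3}t+O(N^{2/3})$ at height $t$, while $-yr$ contributes at most $L_N(d+t)$. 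So the segments vanish as $R\to\infty$, and Fubini for the two ray integrals is justified by \eqref{eq:r-tail-integral}--\eqref{eq:s-tail-integral} together with \eqref{eq:contour-separation}.

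\emph{Main obstacle.} The hardest part is controlling the closing pieces uniformly on $\cG_N$. The vertical original contour sits at the critical angle for Gaussian decay, so decay there must come from the $\sum\log$ terms and the linear term. The plan is to reuse the exact derivative identities of \Cref{lem:global-decay} along horizontal segments to obtain monotone decay in $\Re r$, and to use $L_N=o(N^{2/3})$ to absorb the spatial factors.
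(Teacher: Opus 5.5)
\textbf{Gap: the closing pieces.} Your architecture is the paper's: open the closed $z$-contour with a far-left closing piece, then move the vertical $w$-line onto $\Gamma^\pm$ using horizontal connectors. The Cauchy pole is never crossed because $\Re(r-s)\ge 2d$. Fixing $r$ and using $|r-s|^{-1}\le(2d)^{-1}$ is a legitimate shortcut that avoids the paper's preliminary truncation of the $w$-line. The gap is where this proposition has real content, namely showing that the closing pieces vanish.

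In Step~2 you invoke the horizontal-displacement argument of Step~3 in the proof of \Cref{lem:shifted-contours}. That argument only controls the change of $\Re\Phi_N$ over a horizontal segment of \emph{fixed} length $d$ starting on an unshifted ray, where decay is already known from \Cref{lem:global-decay}. Your connector at height $R$ runs from $\Re r=d$ to the ray at $\Re r=d+R$, so its length is $R$. Integrating the derivative bound $C(N^{2/3}+N^{1/3}R)$ along it gives an error of order $N^{2/3}R+N^{1/3}R^2$, which swamps the ray decay $-cN-cN^{2/3}h$. Indeed $\Re\Phi_N$ genuinely varies by about $N^{1/3}R^2/2$ along the connector, so a perturbative absolute-value bound cannot work. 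Your stated bound also omits the growth of $\sum_j\log|w-\xi_j|$. Similarly, in Step~1, \eqref{eq:shifted-s-far} is a statement about points on the rays only, so ``the integrand on the closing segment is bounded by the ray bound'' is asserted, not proved.

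\textbf{Misdiagnosis in your ``main obstacle'' paragraph.} On the vertical $w$-line, $\Re(w-u_y)^2=(\Re w-u_y)^2-(\Im w)^2$ gives honest Gaussian decay. It is the $45^\circ$ rays that sit at the critical angle. There the decay comes from the offset $u_y-w_N\asymp w_N$ of the Gaussian centre from the ray vertex. The logarithmic terms produce growth, not decay, on the $w$-side.

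\textbf{The missing step.} It is a direct computation in the original variables, as in the paper. For $x,y\ge-L_N$ with $L_N=o(N^{2/3})$ one has $u_x,u_y\ge 3w_N/2$; this is the precise role of the hypothesis on $L_N$.
\begin{itemize}
\item On the vertical $z$-closing segment $z=-B_z+{\rm i}\zeta$, $|\zeta|\le B_z+w_N$, one gets $\Re[-(z-u_x)^2]=-(B_z+u_x)^2+\zeta^2\le-(B_z+u_x)^2+(B_z+w_N)^2\le-B_zw_N$. Meanwhile $\prod_j|z-\xi_j|^{-1}$ only decays.
\item On a $w$-connector at height $B$, the function $\Re(w-u_y)^2=(\Re w-u_y)^2-B^2$ is convex in $\Re w$, hence bounded by its endpoint values. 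At the vertical-line end it is $(u_y-w_N-d\ell_N)^2-B^2$. At the ray end it is $-2B(u_y-w_N-d\ell_N)+(u_y-w_N-d\ell_N)^2\le-cBw_N$ for $B$ large.
\item Since $\prod_j|w-\xi_j|$ and the connector length grow only polynomially in $B$, and $|w-z|^{-1}\le(2d\ell_N)^{-1}$, both connectors vanish as $B\to\infty$ for fixed $N,x,y$.
\end{itemize}

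\textbf{Absolute convergence for Fubini.} The bounds \eqref{eq:r-tail-integral}--\eqref{eq:s-tail-integral} are proved only for $L^3\le c\log N$. For the full range $L_N=o(N^{2/3})$, absolute convergence should come instead from compactness of the local ray pieces (for fixed $N$) together with \eqref{eq:shifted-r-far}--\eqref{eq:shifted-s-far}. There $L_N(d+T_N^*+h)$ is absorbed by $cN+cN^{2/3}h$.
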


\begin{proof}
\noindent{{\it Deforming the $z$-contour}.} \
We first work in the original variables.  Since
$w_N-d\ell_N>\tau_N$ for all sufficiently large $N$, a positively oriented
simple contour enclosing every $\xi_j$ may be chosen inside
$\{\Re z<w_N-d\ell_N\}$.  The vertical $w$-contour in
\eqref{eq:Johansson-kernel} may be chosen on the line
$\Re w=w_N+2d\ell_N$.  We use independent truncation parameters $B_w$ and
$B_z$.  First truncate the vertical $w$-contour at imaginary parts
$\pm B_w$.  For $B_z>w_N$, deform the closed $z$-contour to the two
truncated rays
\[
 w_N-d\ell_N-t\pm {\rm i}t,
 \qquad 0\le t\le w_N+B_z-d\ell_N,
\]
joined by the vertical segment on $\Re z=-B_z$.  This closed contour is
positively oriented and encloses every $\xi_j$.  During the deformation its
real part stays at least $d\ell_N$ to the left of the $w$-contour, so the pole
$w=z$ is not crossed.  Cauchy's theorem therefore justifies the finite
$z$-deformation.

The bounds \eqref{eq:vN-bounds}, together with $w_N/A_N\to1$ and
$\tau_N/w_N\to0$, imply uniformly over truncated configurations that
\[
 \frac{v_N-w_N}{w_N}=1+o(1),
 \qquad
 \frac{u_x-v_N}{w_N}=\frac{x}{N^{2/3}}(1+o(1)).
\]
Consequently, uniformly for $x,y\ge-L_N$ with $L_N=o(N^{2/3})$, one has
$u_x,u_y\ge3w_N/2$ for all sufficiently large $N$; increasing $x$ or $y$
only strengthens these lower bounds.

For fixed $B_w$, the $z$-closing segment can now be removed.  Indeed, for
$z=-B_z+{\rm i}\zeta$, $|\zeta|\le B_z+w_N$, one has
\begin{align*}
 \Re\bigl[-(z-u_x)^2\bigr]
 &=-(B_z+u_x)^2+\zeta^2\\
 &\le-(B_z+u_x)^2+(B_z+w_N)^2
 \le-cB_zw_N-cw_N^2.
\end{align*}
For fixed $N$ and $B_w$, the remaining rational factors have at most
polynomial growth in $B_z$.  Thus the integral over the closing segment is
bounded by
\[
 C_{N,B_w}(1+B_z)^{m_N}e^{-cB_zw_N/S_N}
\]
for some finite integer $m_N$, and tends to zero as $B_z\to\infty$, uniformly
for $w$ on the fixed truncated vertical contour.  We are left with the two
infinite $z$-rays.
For fixed $B_w$, Gaussian decay on the two $z$-rays is uniform for $w$
in the compact $w$-homotopy.  Hence the $z$-integral converges locally
uniformly in $w$ and defines an analytic function throughout the deformation
region.

\medskip

\noindent{{\it Deforming the $w$-contour}.} \
We next deform the upper and lower halves of the truncated vertical
$w$-contour to
\[
 w_N+d\ell_N+t\pm {\rm i}t,
 \qquad 0\le t\le B_w,
\]
using horizontal connectors at heights $\pm B_w$.  At height zero, the upper
and lower homotopies also contain the segment joining
$w_N+2d\ell_N$ to $w_N+d\ell_N$; the two copies have opposite orientations
and cancel when the upper and lower halves are recombined.  The deformed
$w$-contour remains in $\{\Re w\ge w_N+d\ell_N\}$, whereas the deformed
$z$-contour lies in $\{\Re z\le w_N-d\ell_N\}$.  Hence
$\Re(w-z)\ge2d\ell_N$ throughout this second homotopy.  No singularity is
crossed.

\medskip

\noindent{{\it Removing the $w$-connectors}.} \
We now show that the $w$-connectors disappear as $B_w\to\infty$.  Fix
$x,y\ge-L_N$.
For the upper $w$-connector write
$w=w_N+2d\ell_N+t+{\rm i}B_w$, with $0\le t\le B_w-d\ell_N$; the lower connector is
its conjugate.  Define $\beta_N(y)=w_N+2d\ell_N-u_y$.  Uniformly for $y\ge-L_N$,
$\beta_N(y)\le-cw_N$ for all sufficiently large $N$.  The
real part of the Gaussian exponent is
\[
 \Re (w-u_y)^2=(\beta_N(y)+t)^2-B_w^2.
\]
For $0\le t\le B_w/2$ this is at most $-cB_w^2$ once
$B_w\ge C(w_N+|\beta_N(y)|)$.  On
$B_w/2\le t\le B_w-d\ell_N$, convexity shows that its maximum is attained at
an endpoint.  At the right endpoint,
\[
 (\beta_N(y)+B_w-d\ell_N)^2-B_w^2
 =2B_w(\beta_N(y)-d\ell_N)+(\beta_N(y)-d\ell_N)^2,
\]
which is at most $-cB_ww_N$ once $B_w$ is sufficiently large (depending on the
fixed $y$).  Thus this connector is bounded by
$\exp\{-cB_w w_N/S_N\}$ times a polynomial in $B_w$.  Throughout the
homotopy, $|w-z|^{-1}\le(2d\ell_N)^{-1}$.  The local portions of the
infinite $z$-rays have finite length.  On a far ray write
$t=T_N^*+h$ and set $q=(-x)_+\le L_N$.  By
\eqref{eq:shifted-s-far},
\[
 -\Re\Phi_N(s)+x\Re s
 \le -cN-cN^{2/3}h+L_N(d+T_N^*+h)
 \le -c'N-c'N^{2/3}h,
\]
because $L_NT_N^*=o(N)$ and $L_N=o(N^{2/3})$.  Hence the absolute
$z$-integral is finite, uniformly over the allowed spatial range.  Dominated convergence
therefore removes both $w$-connectors as $B_w\to\infty$.  The tails of the
original vertical $w$-contour vanish by the same Gaussian estimate.  This is
the finite contour version of Johansson's deformation~\cite{Johansson2007}, with the separation
made explicit.

\medskip

\noindent{{\it Rescaling and absolute convergence}.} \
After these sequential limits, we make the changes of variables
$w=w_N+\ell_Nr$ and $z=w_N+\ell_Ns$.  The algebra preceding
\eqref{eq:Airy-double-contour} gives
\[
 \frac{\alpha_N}{\sqrt N}\frac{\ell_N^2}{S_N\ell_N}=1
\]
and the conjugation cancels every remaining constant in $x$ and $y$.
To see absolute convergence uniformly down to $x,y=-L_N$, write
$t=T_N^*+h$ on a far ray.  Since $L_N=o(N^{2/3})$ and
$T_N^*\asymp N^{1/3}$,
\[
 -cN-cN^{2/3}h+L_N(d+T_N^*+h)
 \le -c'N-c'N^{2/3}h
\]
for all sufficiently large $N$.  The local ray pieces are compact, and
nonnegative spatial variables only add decay.  Thus
\Cref{lem:shifted-contours} gives an integrable majorant.  We therefore obtain
\eqref{eq:scaled-double-contour}.  Finally,
\eqref{eq:contour-separation} permits
$(r-s)^{-1}=\int_0^\infty e^{-\rho(r-s)}\dd\rho$; the same decay estimates
and $\int_0^\infty e^{-2d\rho}\dd\rho<\infty$ justify Fubini's theorem and
the factorized integrals below.
\end{proof}

Once the kernel is on separated contours, the Cauchy factor can be
factorized.  This is the key step that converts a double-contour kernel
comparison into a comparison of two products of Hilbert--Schmidt
operators.


\subsection{Trace-norm comparison}


The strategy is now as follows.  The separated contours give
$(r-s)^{-1}=\int_0^\infty e^{-\rho(r-s)}\dd\rho$.  Interchanging
integrals writes each kernel as a product of two factors, one containing
the $s$-integral and one containing the $r$-integral.  We first prove
uniform Hilbert--Schmidt bounds for all four factors, then estimate the
differences of the finite-$N$ and Airy factors.

We repeatedly use the following elementary Laplace bound.

\begin{lemma}\label{lem:cubic-Laplace}
For every $c>0$ and integer $m\ge0$, there is a constant $C_m>0$
such that, for all $q\ge0$,
\begin{equation}\label{eq:cubic-Laplace}
 \int_0^\infty(1+t^m)e^{-ct^3+qt}\,\dd t
 \le C_m e^{C_mq^{3/2}}.
\end{equation}
\end{lemma}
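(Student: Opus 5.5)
The plan is to bound the exponent $-ct^3+qt$ pointwise by its maximum over $t$, while saving part of the cubic decay so that the polynomial weight remains integrable. Write
\[
 -ct^3+qt=-\tfrac{c}{2}t^3+\Bigl(qt-\tfrac{c}{2}t^3\Bigr).
\]
For $q\ge0$, the function $t\mapsto qt-\tfrac c2t^3$ on $[0,\infty)$ attains its maximum at $t_*=\sqrt{2q/(3c)}$. The maximal value is
\[
 qt_*-\tfrac c2t_*^3=\tfrac23\,q\,t_*=\tfrac23\sqrt{\tfrac{2}{3c}}\;q^{3/2}.
\]
This gives the pointwise bound
\[
 (1+t^m)e^{-ct^3+qt}\le(1+t^m)e^{-ct^3/2}\,\exp\Bigl\{\tfrac23\sqrt{\tfrac{2}{3c}}\,q^{3/2}\Bigr\},
 \qquad t\ge0.
\]

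Next integrate in $t$. The integral $\int_0^\infty(1+t^m)e^{-ct^3/2}\,\dd t$ is a finite constant depending only on $c$ and $m$. Set $C_m$ to be the maximum of this constant and $\tfrac23\sqrt{2/(3c)}$. Then \eqref{eq:cubic-Laplace} holds uniformly for $q\ge0$, including $q=0$, where the bound is trivial.

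No step here is a real obstacle. The only point needing care is to keep a fraction of the cubic decay aside before maximizing the exponent, since this leftover decay is what makes the weight $t^m$ integrable with a constant independent of $q$. In the intended application $q\le L$ with $L^3\le c\log N$. The factor $e^{C_mq^{3/2}}$ is then at most $N^{o(1)}$, and it can be absorbed by the $N^{-c}$ errors of \Cref{lem:shifted-contours}.
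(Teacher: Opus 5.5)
Your proof is correct and is essentially the paper's argument. The paper uses Young's inequality $qt\le(c/2)t^3+Cq^{3/2}$ to absorb the linear term into half of the cubic decay, which your explicit maximization of $qt-\tfrac c2t^3$ does with the sharp constant $\tfrac23\sqrt{2/(3c)}$, and both then bound the remaining finite integral of $(1+t^m)e^{-ct^3/2}$ by a constant.
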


\begin{proof}
Young's inequality gives $qt\le(c/2)t^3+Cq^{3/2}$.  After the cubic
term is absorbed, the remaining integral of
$(1+t^m)e^{-ct^3/2}$ is finite, which proves
\eqref{eq:cubic-Laplace}.
\end{proof}

We apply this bound after the contour factorization \eqref{eq:Cauchy-factor}.
It controls the positive exponential contribution created when the spatial
variable lies in $[-L,0]$.  In what follows, $\|\cdot\|_p$ denotes the
Schatten $p$-norm; see Simon~\cite[Chapter~2]{Simon2005} for the standard
definitions and ideal properties used below.

\begin{proposition}
\label{prop:trace-comparison}
There exist constants $c,C,c_*>0$ such that, whenever
\begin{equation}\label{eq:trace-window}
 L\ge1,
 \qquad
 L^3\le c_*\log N,
\end{equation}
one has
\begin{equation}\label{eq:trace-comparison}
 \sup_{\bxi\in\cG_N}
 \left\|
 \one_{(-L,\infty)}
 (\widetilde K_N-K_{\Ai})
 \one_{(-L,\infty)}
 \right\|_1
 \le CN^{-c}e^{CL^{3/2}}.
\end{equation}
Moreover,
\begin{equation}\label{eq:trace-norm-bounds}
 \left\|
 \one_{(-L,\infty)}K_{\Ai}\one_{(-L,\infty)}
 \right\|_1
 \le C(1+L^{3/2}),
\end{equation}
and, after decreasing $c_*$ if necessary,
\begin{equation}\label{eq:finite-kernel-trace-bound}
 \left\|
 \one_{(-L,\infty)}\widetilde K_N\one_{(-L,\infty)}
 \right\|_1
 \le C(1+L^{3/2}).
\end{equation}
\end{proposition}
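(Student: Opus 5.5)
We plan to factorize both kernels through the separated-contour Laplace representation and reduce everything to Hilbert--Schmidt estimates. By \eqref{eq:contour-separation} and \Cref{prop:scaled-contour-representation}, we may write
\[
 \widetilde K_N(x,y)=\int_0^\infty A_N(x,\rho)B_N(\rho,y)\,\dd\rho,
\]
where
\[
 A_N(x,\rho)=\frac1{2\pi{\rm i}}\int_\Sigma e^{-\Phi_N(s)+(x+\rho)s}\,\dd s,
 \qquad
 B_N(\rho,y)=\frac1{2\pi{\rm i}}\int_\Gamma e^{\Phi_N(r)-(y+\rho)r}\,\dd r,
\]
and we define $A_{\Ai},B_{\Ai}$ likewise with $\zeta^3/3$ in place of $\Phi_N$. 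These are operators $L^2(0,\infty)\to L^2(-L,\infty)$ and back, respectively, so that $\one K\one=A\,B$ as operators on $L^2(-L,\infty)$. The Schatten--H\"older inequality then gives
\[
 \|\one(\widetilde K_N-K_{\Ai})\one\|_1\le\|A_N-A_{\Ai}\|_2\|B_N\|_2+\|A_{\Ai}\|_2\|B_N-B_{\Ai}\|_2 ,
\]
and $\|\one K\one\|_1\le\|A\|_2\|B\|_2$ for each kernel. It therefore suffices to prove four estimates: $\|A\|_2,\|B\|_2\le C(1+L^{3/4})$ for both the finite-$N$ and Airy factors, and $\|A_N-A_{\Ai}\|_2,\|B_N-B_{\Ai}\|_2\le CN^{-c}e^{CL^{3/2}}$.

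For the uniform bounds we take the Airy factor first. Here $A_{\Ai}(x,\rho)=\Ai(x+\rho)$, and
\[
 \|A_{\Ai}\|_2^2=\int_{-L}^\infty\int_0^\infty\Ai(x+\rho)^2\,\dd\rho\,\dd x=\int_{-L}^\infty (u+L)\Ai(u)^2\,\dd u .
\]
Using $\Ai(u)^2\lesssim(1+|u|)^{-1/2}$ for $u<0$, this is $O(1+L^{3/2})$, which gives \eqref{eq:trace-norm-bounds} after taking square roots and multiplying. For the finite-$N$ factors we bound $|A_N(x,\rho)|$ pointwise by splitting $\Sigma$ at $N^\eta$. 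On the local piece, \eqref{eq:shifted-s-local} together with $\Re s=-d-t$ gives
\[
 |A_N(x,\rho)|\lesssim e^{-d\rho}\int (1+t)e^{-ct^3+q(d+t)}\,\dd t,
 \qquad q=(-x)_+ ,
\]
and \Cref{lem:cubic-Laplace} bounds the integral. When $x\ge0$ we instead extract $e^{-dx}$. The far piece is handled by \eqref{eq:s-tail-integral}. The result is
\[
 |A_N(x,\rho)|\le Ce^{-d\rho}\bigl(e^{-dx}\one_{x\ge0}+e^{Cq^{3/2}}\one_{x<0}\bigr).
\]
Squaring and integrating over $x\in(-L,\infty)$ and $\rho>0$ would only give $e^{CL^{3/2}}$, which is too weak for the polynomial bound \eqref{eq:finite-kernel-trace-bound}. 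So we will instead derive \eqref{eq:finite-kernel-trace-bound} from the difference estimate via the triangle inequality:
\[
 \|\one\widetilde K_N\one\|_1\le C(1+L^{3/2})+CN^{-c}e^{CL^{3/2}} .
\]
The second term is $\le1$ once $L^{3/2}\le(c/2C)\log N$, which is guaranteed by $L^3\le c_*\log N$ with $c_*$ decreased; indeed $L^{3/2}\le(c_*\log N)^{1/2}$. The crude exponential bounds for $\|A_N\|_2,\|B_N\|_2$ remain enough where they enter the Hölder inequality multiplied by a difference term.

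The main obstacle is the difference estimate. We write
\[
 A_N-A_{\Ai}=\frac1{2\pi{\rm i}}\int_\Sigma\bigl(e^{-\Phi_N(s)}-e^{-s^3/3}\bigr)e^{(x+\rho)s}\,\dd s .
\]
On the local window $t\le N^\eta$, \eqref{eq:shifted-s-comparison} gives the integrand bound $CN^{-c}(1+t^4)e^{-ct^3}e^{-d\rho}e^{q(d+t)}$. By \Cref{lem:cubic-Laplace} this yields $CN^{-c}e^{-d\rho}e^{Cq^{3/2}}$ for $x<0$ and $CN^{-c}e^{-d\rho-dx}$ for $x\ge0$. The tails $t\ge N^\eta$ of both phases contribute $e^{-cN^{3\eta}}e^{-d\rho}$ by \eqref{eq:s-tail-integral} and its Airy analogue; this is $\le N^{-c}$, and the condition $L^3\le c\log N$ is exactly what is needed there. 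Squaring and integrating gives
\[
 \|A_N-A_{\Ai}\|_2\le CN^{-c}e^{CL^{3/2}},
\]
with a polynomial factor in $L$ absorbed into the exponential, and the $B$ factors are handled symmetrically using \eqref{eq:shifted-r-comparison} and \eqref{eq:r-tail-integral}. All constants come from \Cref{lem:shifted-contours}, which holds uniformly on $\cG_N$; this gives the supremum in \eqref{eq:trace-comparison}. The delicate points are checking that the positive spatial factor $e^{q(d+t)}$ with $q\le L$ is dominated uniformly, which \Cref{lem:shifted-contours} already packages, and that the $\rho$-integrability $e^{-2d\rho}$ survives in every piece.
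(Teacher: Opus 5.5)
Your proposal is correct and follows the paper's proof essentially step for step: the Laplace factorization $\widetilde K_N=\mathcal A_N\mathcal B_N$, crude $e^{CL^{3/2}}$ Hilbert--Schmidt bounds on the finite-$N$ factors, the local comparison via \eqref{eq:shifted-s-comparison} and \Cref{lem:cubic-Laplace} with the contour tails absorbed by \eqref{eq:s-tail-integral}, the Schatten--H\"older split, and the triangle inequality for \eqref{eq:finite-kernel-trace-bound}. The only small variation is in \eqref{eq:trace-norm-bounds}. You bound it by $\|\mathcal A_{\Ai}\|_2\|\mathcal B_{\Ai}\|_2=\int_{-L}^\infty(u+L)\Ai(u)^2\,\dd u$, whereas the paper uses positivity and the diagonal integral of $K_{\Ai}(x,x)=\Ai'(x)^2-x\Ai(x)^2$; these are the same quantity, and your route reaches it more directly. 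Your opening claim that $\|\mathcal A_N\|_2$ and $\|\mathcal B_N\|_2$ obey a polynomial bound is not needed, as you yourself observe later.
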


\begin{proof}
Since $L^3\le c_*\log N$, one has $L=o(N^{2/3})$, so
\Cref{prop:scaled-contour-representation} applies on this window for all
sufficiently large $N$.  By \eqref{eq:contour-separation},
\begin{equation}\label{eq:Cauchy-factor}
 \frac1{r-s}=\int_0^\infty e^{-\rho(r-s)}\,\dd\rho.
\end{equation}
The piecewise estimates in \Cref{lem:shifted-contours} justify Fubini's
theorem.  Hence $\widetilde K_N=\mathcal A_N\mathcal B_N$, where
\begin{align}
 \mathcal A_N(x,\rho)
=\frac1{2\pi {\rm i}}\int_\Sigma
 e^{-\Phi_N(s)+xs+\rho s}\,\dd s,                      \label{eq:AN-factor}\\
 \mathcal B_N(\rho,y)
=\frac1{2\pi {\rm i}}\int_\Gamma
 e^{\Phi_N(r)-yr-\rho r}\,\dd r.                       \label{eq:BN-factor}
\end{align}
The Airy kernel has the identical factorization
$K_{\Ai}=\mathcal A_{\Ai}\mathcal B_{\Ai}$ with
$\Phi_N(\zeta)$ replaced by $\zeta^3/3$.  The operators $\mathcal A_N$
and $\mathcal A_{\Ai}$ map $L^2(0,\infty)$ to $L^2(-L,\infty)$, whereas
$\mathcal B_N$ and $\mathcal B_{\Ai}$ map in the opposite direction.
Their products therefore act on $L^2(-L,\infty)$.  The estimates below
show that each factor is Hilbert--Schmidt.  Consequently,
the ideal property of Schatten classes in
Simon~\cite[Chapter~2]{Simon2005} shows that
both products are trace class, which also justifies the operator Fredholm
determinants used in the sequel.

We first estimate the factors.  Parametrize a branch of $\Sigma$ by
$s=-d-t\pm {\rm i}t$.  If $x\ge0$, then $x\Re s\le-dx$.  If
$x=-q\in[-L,0)$, then $x\Re s=q(d+t)$.  The local and far bounds of
\Cref{lem:shifted-contours} imply
\begin{align}
 |\mathcal A_N(x,\rho)|
 &\le Ce^{-d\rho}
 \begin{cases}
 e^{-dx},&x\ge0,\\
 e^{dq}\displaystyle\int_0^\infty e^{-ct^3+qt}\,\dd t,
 &x=-q\in[-L,0),
 \end{cases}                                           \label{eq:A-prebound}
\end{align}
where the far finite-$N$ portion is exponentially smaller and has been
absorbed into the constant.  \Cref{lem:cubic-Laplace} gives
\begin{equation}\label{eq:A-pointwise}
 |\mathcal A_N(x,\rho)|
 \le Ce^{CL^{3/2}}e^{-d\rho}
 \left(
 \one_{[-L,0]}(x)+e^{-dx}\one_{[0,\infty)}(x)
 \right).
\end{equation}
The same calculation on $\Gamma$ gives
\begin{equation}\label{eq:B-pointwise}
 |\mathcal B_N(\rho,y)|
 \le Ce^{CL^{3/2}}e^{-d\rho}
 \left(
 \one_{[-L,0]}(y)+e^{-dy}\one_{[0,\infty)}(y)
 \right).
\end{equation}
The Airy factors satisfy the same estimates.  For example,
\begin{align*}
 \|\mathcal A_N\|_2^2
 \le C e^{CL^{3/2}}
 \int_0^\infty e^{-2d\rho}\dd\rho
 \left(
 \int_{-L}^0\dd x+
 \int_0^\infty e^{-2dx}\dd x
 \right)\le C(1+L)e^{CL^{3/2}}.
\end{align*}
The same calculation applies to the other three factors.  Taking square
roots and enlarging $C$ gives
\begin{equation}\label{eq:HS-uniform}
 \|\mathcal A_N\|_2+\|\mathcal B_N\|_2
 +\|\mathcal A_{\Ai}\|_2+\|\mathcal B_{\Ai}\|_2
 \le Ce^{CL^{3/2}},
\end{equation}
since $(1+L)^{1/2}\le Ce^{CL^{3/2}}$ for $L\ge1$.

We next compare the factors.  Split every contour at $t=N^\eta$.  On the
local part of $\Sigma$, for instance,
\eqref{eq:shifted-s-comparison} gives, with $x=-q\in[-L,0]$,
\[
 |(\mathcal A_N-\mathcal A_{\Ai})(x,\rho)|
 \le CN^{-c}e^{-d\rho+dq}
 \int_0^{N^\eta}(1+t^4)e^{-ct^3+qt}\dd t.
\]
For $x\ge0$ the same expression has the additional factor $e^{-dx}$ and no
positive $qt$ term.  The analogous estimate holds for
$\mathcal B_N-\mathcal B_{\Ai}$.  The case $m=4$ of
\Cref{lem:cubic-Laplace}, followed by the explicit Hilbert--Schmidt
integration above, therefore yields
\begin{equation}\label{eq:factor-difference-local}
 \|\mathcal A_N-\mathcal A_{\Ai}\|_2
 +\|\mathcal B_N-\mathcal B_{\Ai}\|_2
 \le CN^{-c}e^{CL^{3/2}}.
\end{equation}
On the complementary contour pieces, the last assertion of
\Cref{lem:shifted-contours} gives a pointwise bound
$Ce^{-cN^{3\eta}}e^{-d\rho}$, with the same positive half-line spatial
decay as above.  Its Hilbert--Schmidt norm is at most
$Ce^{-cN^{3\eta}/2}$ after absorbing the factor $(1+L)^{1/2}$.  This is
smaller than the right-hand side of
\eqref{eq:factor-difference-local}.  Therefore
\begin{equation}\label{eq:factor-difference}
 \|\mathcal A_N-\mathcal A_{\Ai}\|_2
 +\|\mathcal B_N-\mathcal B_{\Ai}\|_2
 \le CN^{-c}e^{CL^{3/2}}.
\end{equation}

The ideal property of Schatten norms in
Simon~\cite[Chapter~2]{Simon2005} gives
\begin{align*}
 \|\widetilde K_N-K_{\Ai}\|_1
 &\le
 \|\mathcal A_N-\mathcal A_{\Ai}\|_2\|\mathcal B_N\|_2
 +\|\mathcal A_{\Ai}\|_2
  \|\mathcal B_N-\mathcal B_{\Ai}\|_2.
\end{align*}
Equations \eqref{eq:HS-uniform} and \eqref{eq:factor-difference} prove
\eqref{eq:trace-comparison} after renaming constants.

Finally, the Airy kernel is positive because, for every
$f\in L^2(-L,\infty)$,
\[
 \left\langle f,
 \one_{(-L,\infty)}K_{\Ai}\one_{(-L,\infty)}f\right\rangle
 =\int_0^\infty
 \Big|\int_{-L}^\infty f(x)\Ai(x+\lambda)\,\dd x\Big|^2\dd\lambda
 \ge0.
\]
It is trace class by the factorization above, so its trace norm equals its
trace.  Since its kernel is continuous, the diagonal trace formula in
Simon~\cite[Theorem~2.12]{Simon2005} then gives
\[
 \left\|
 \one_{(-L,\infty)}K_{\Ai}\one_{(-L,\infty)}
 \right\|_1
 =\int_{-L}^\infty K_{\Ai}(x,x)\,\dd x.
\]
To verify the required diagonal bounds, use the identity
\[
 K_{\Ai}(x,x)=\Ai'(x)^2-x\Ai(x)^2,
\]
which follows from integration by parts and the Airy equation
$\Ai''(x)=x\Ai(x)$.  The classical pointwise estimates in Olver et
al.~\cite[Section~9.7]{OlverEtAl2010}
\[
 |\Ai(-u)|\le C(1+u)^{-1/4},\qquad
 |\Ai'(-u)|\le C(1+u)^{1/4},\qquad u\ge0,
\]
and
\[
 |\Ai(u)|\le C(1+u)^{-1/4}e^{-2u^{3/2}/3},
 \qquad
 |\Ai'(u)|\le C(1+u)^{1/4}e^{-2u^{3/2}/3},
 \qquad u\ge0,
\]
therefore give
\[
 K_{\Ai}(x,x)\le C(1+|x|^{1/2}),\quad x\le0,
 \qquad
 K_{\Ai}(x,x)\le Ce^{-cx^{3/2}},\quad x\ge0.
\]
Integrating these estimates proves \eqref{eq:trace-norm-bounds}.  Under
\eqref{eq:trace-window}, after choosing $c_*$ sufficiently small,
$N^{-c}e^{CL^{3/2}}=o(1)$.  The finite-$N$ trace-norm bound follows from
\eqref{eq:trace-comparison} and the triangle inequality.
\end{proof}

The trace-norm estimate is converted next into a relative comparison of the
conditional distribution with the Tracy--Widom law.


\section{Conditional Tracy--Widom tails}\label{sec:conditional-tails}


This section converts the operator estimate of \Cref{sec:airy} into
estimates for conditional probabilities.  There are two comparisons.  First,
trace-norm continuity gives an absolute comparison of Fredholm determinants.
The second, a relative one, compares the error with the Tracy--Widom tail itself.
It is here that the logarithmic restriction on the Airy window enters.

For a deterministic truncated configuration
$\mathbf x=(x_1,\ldots,x_N)\in[-\tau_N,\tau_N]^N$, let
\begin{equation}\label{eq:HN}
 \mathcal H_N(t;\mathbf x)
 =\Prob_{\mathbf x}(\chi_N\le t)
\end{equation}
denote the conditional distribution function, where $\Prob_{\mathbf x}$ is
the law of the GUE perturbation with the diagonal fixed at $\mathbf x$, and
under this law
\[
 \chi_N=\chi_N(\mathbf x)
 :=\frac{\sqrt N}{\alpha_N}\bigl(\lambda_N-v_N(\mathbf x)\bigr).
\]
For all sufficiently large $N$, $w_N>2\tau_N$, so the denominators defining
$v_N(\mathbf x)$ are uniformly nonsingular on the truncated cube.  This
definition also chooses a regular conditional distribution for every
deterministic $\mathbf x$; evaluated at the random configuration, it
satisfies
$\mathcal H_N(t;\bxi)=\Prob(\chi_N\le t\mid\cF_N)$ almost surely.

Starting from \eqref{eq:conditional-Fredholm}, set the spectral threshold to 
$v_N(\mathbf x)+\alpha_Nt/\sqrt N$.  The change of variables and the
termwise conjugation from \Cref{sec:airy} then give
\begin{equation}\label{eq:HN-Fredholm}
 \mathcal H_N(t;\mathbf x)
 =\det\!\left(
 I-\widetilde K_N(\mathord{\cdot},\mathord{\cdot};\mathbf x)
 \right)_{L^2(t,\infty)}.
\end{equation}
For an arbitrary deterministic truncated configuration, the determinant in
\eqref{eq:HN-Fredholm} is interpreted through the finite Fredholm series in
\eqref{eq:Fredholm-series}, whose terms are unchanged by the rescaling and
conjugation.  On $\cG_N$ and in the windows considered below, the trace-class
results of \Cref{sec:airy} show that it is also the usual operator Fredholm
determinant.

The trace-norm comparison now gives a relative Fredholm determinant estimate,
and therefore the required conditional Tracy--Widom tails.

\begin{proposition}
\label{prop:conditional-TW}
There exist constants $c,C,c_*>0$ such that, whenever
$L\ge1$ and $L^3\le c_*\log N$,
\begin{equation}\label{eq:determinant-comparison}
 \sup_{\mathbf x\in\cG_N}\sup_{{t\ge-L}}
 |\mathcal H_N(t;\mathbf x)-F_2(t)|
 \le CN^{-c}e^{CL^{3/2}}.
\end{equation}
Moreover, for every compact set $K\subset(0,\infty)$, the following two
estimates hold uniformly for $\mathbf x\in\cG_N$ and $u\in K$:
\begin{align}
 \sup_{\mathbf x\in\cG_N}\sup_{u\in K}
 \left|
 \log\Prob_{\mathbf x}(\chi_N>a_Nu)
 +\frac43(a_Nu)^{3/2}
 \right|
 =o(a_N^{3/2}),                                      \label{eq:conditional-right-tail}\\
 \sup_{\mathbf x\in\cG_N}\sup_{u\in K}
 \left|
 \log\Prob_{\mathbf x}(\chi_N\le-a_Nu)
 +\frac1{12}(a_Nu)^3
 \right|
 =o(a_N^3).                                          \label{eq:conditional-left-tail}
\end{align}
In particular, evaluating a fixed version of the conditional probabilities at
$\mathbf x=\bxi$ gives the same deterministic bounds on $\cG_N$.  By
\Cref{lem:good-event}, the complement $\cG_N^c$ is superexponentially
negligible at every speed used in \Cref{sec:phase-proof}.
\end{proposition}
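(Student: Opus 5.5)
The proof has three parts: an absolute determinant comparison, then the right tail, then the left tail.

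\emph{Absolute comparison.} I will use \eqref{eq:HN-Fredholm} together with the standard Lipschitz bound for Fredholm determinants of trace-class operators,
\[
 |\det(I-A)-\det(I-B)|\le\|A-B\|_1\exp\{1+\|A\|_1+\|B\|_1\}
\]
(Simon~\cite[Theorem~3.4]{Simon2005}). Fix $t\ge-L$. The determinant on $L^2(t,\infty)$ is the determinant of $\one_{(t,\infty)}K\one_{(t,\infty)}$. Compressing with $\one_{(t,\infty)}$ does not increase trace norms, so I can work with the operators $\one_{(-L,\infty)}(\cdot)\one_{(-L,\infty)}$. \Cref{prop:trace-comparison} then gives a difference of at most $CN^{-c}e^{CL^{3/2}}$ and trace norms of at most $C(1+L^{3/2})$. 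The product is bounded by $CN^{-c}e^{C'L^{3/2}}$, uniformly in $t\ge-L$ and $\mathbf x\in\cG_N$. This proves \eqref{eq:determinant-comparison}.

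\emph{Right tail.} Choose $L=1$, or any fixed constant. For $u\in K$ the threshold $a_Nu$ is nonnegative, hence $\ge-L$. Then
\[
 \Prob_{\mathbf x}(\chi_N>a_Nu)=1-\mathcal H_N(a_Nu)=1-F_2(a_Nu)+O(N^{-c}).
\]
By \eqref{eq:TW-right-intro}, $1-F_2(a_Nu)=\exp\{-\tfrac43(a_Nu)^{3/2}+O(\log a_N)\}$. Since $a_N^{3/2}=o(\log N)$, this quantity is $N^{-o(1)}$ and dominates the $N^{-c}$ error, so the relative error is $1+o(1)$. Taking logarithms gives \eqref{eq:conditional-right-tail}. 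The $O(\log a_N)$ term in the Tracy--Widom asymptotics is uniform over the compact set $K$, which gives uniformity in $u$.

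\emph{Left tail.} This is the delicate step, because $F_2(-a_Nu)=\exp\{-\tfrac1{12}a_N^3u^3+O(\log a_N)\}$ while the error term is $N^{-c}e^{CL^{3/2}}$. I take $L=a_N\max K$. The condition $L^3\le c_*\log N$ holds eventually because $a_N^3=o(\log N)$. Then $e^{CL^{3/2}}=N^{o(1)}$, so the error is $N^{-c+o(1)}$. On the other hand, $F_2(-a_Nu)\ge\exp\{-C a_N^3\}=N^{-o(1)}$, again using $a_N^3=o(\log N)$. Hence the error is $o(F_2(-a_Nu))$ uniformly, and
\[
 \Prob_{\mathbf x}(\chi_N\le-a_Nu)=F_2(-a_Nu)(1+o(1)).
\]
Taking logarithms gives \eqref{eq:conditional-left-tail}. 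This is exactly the point where the logarithmic window is needed: any fixed polynomial error must be beaten by the tail, and $a_N^3=o(\log N)$ is what guarantees this. One should check that $c_*$ from \Cref{prop:trace-comparison} is compatible with this choice of $L$; it is, because $L^3/\log N\to0$.

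The last two assertions of the proposition need no further work. They follow from the regular conditional distribution identity $\mathcal H_N(t;\bxi)=\Prob(\chi_N\le t\mid\cF_N)$ and from \Cref{lem:good-event}.
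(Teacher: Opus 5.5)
Your proposal is correct and follows the paper's proof essentially step by step. You use Simon's trace-norm Lipschitz bound for Fredholm determinants, applied to compressions of the operators on $L^2(-L,\infty)$. You then run the relative-error test with a fixed $L$ for the right tail and $L\asymp a_N$ for the left tail, using $a_N^3=o(\log N)$ so that $N^{-c}e^{CL^{3/2}}$ is negligible against $F_2(-a_Nu)$. The only difference is cosmetic: you take $L=a_N\max K$ where the paper takes $L=2u_1a_N$.
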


\begin{proof}
\noindent{{\it Determinant comparison}.} \
For trace-class operators $A$ and $B$, the determinant continuity estimate in
Simon~\cite[Theorem~3.4]{Simon2005} gives
\begin{equation}\label{eq:det-continuity}
 |\det(I-A)-\det(I-B)|
 \le\|A-B\|_1
 \exp\{1+\|A\|_1+\|B\|_1\}.
\end{equation}
For $t\ge-L$, let $A_{N,t}$ and $A_{\Ai,t}$ denote the compressions of
$\widetilde K_N$ and $K_{\Ai}$ to $L^2(t,\infty)$.  They are also
compressions of the corresponding operators on $L^2(-L,\infty)$, and
multiplication by a projection does not increase trace norm.  Hence
\Cref{prop:trace-comparison} gives, uniformly for $t\ge-L$ and
$\mathbf x\in\cG_N$,
\[
 \|A_{N,t}-A_{\Ai,t}\|_1\le CN^{-c}e^{CL^{3/2}},
 \qquad
 \|A_{N,t}\|_1+\|A_{\Ai,t}\|_1\le C(1+L^{3/2}).
\]
Apply \eqref{eq:det-continuity} with $A=A_{N,t}$ and $B=A_{\Ai,t}$.  Since
$e^{C(1+L^{3/2})}$ has the same form as the factor already present in the
trace-norm error, renaming $c,C$ proves
\eqref{eq:determinant-comparison}.

\medskip

\noindent{{\it The relative error test}.} \ 
Fix a compact interval $[u_0,u_1]\subset(0,\infty)$ containing $K$.
For the right tail, apply \eqref{eq:determinant-comparison} with $L=1$;
then $t=a_Nu\ge-1$, and the resulting determinant error satisfies
$\varepsilon_N^+\le CN^{-c}$.  We first compare this error with the right
Tracy--Widom  tail, uniformly for $u\in[u_0,u_1]$. 

For $t=a_Nu$, the Tracy--Widom right-tail expansion
\eqref{eq:TW-right-intro} gives
\[
 1-F_2(t)
 =\exp\left\{-\frac43t^{3/2}+O(\log t)\right\}.
\]
Because $u$ is bounded away from zero and infinity,
\begin{align*}
 \log\frac{{\varepsilon_N^+}}{1-F_2(a_Nu)}
 &\le {-c\log N}
 +\frac43(a_Nu)^{3/2}+O(\log a_N)\\
 &=-c\log N+o(\log N),
\end{align*}
uniformly for $u\in[u_0,u_1]$.  Here we used
$a_N^{3/2}=o(\log N)$, which follows from \Cref{ass:scale}.  Thus
$\varepsilon_N^+=o(1-F_2(a_Nu))$ uniformly on the stated interval, and the
determinant comparison yields
\[
 1-\mathcal H_N(a_Nu;\mathbf x)
 =(1-F_2(a_Nu))(1+o(1))
\]
uniformly on $\cG_N$.  Taking logarithms and using
$O(\log a_N)=o(a_N^{3/2})$ proves
\eqref{eq:conditional-right-tail}.

For the left tail, take $L=2u_1a_N$.  Since $a_N^3=o(\log N)$,
$L^3\le c_*\log N$ for all sufficiently large $N$.  After renaming
constants, a deterministic error majorant is
\begin{equation}\label{eq:det-error-log}
\varepsilon_N^-:=CN^{-c}e^{Ca_N^{3/2}}
 =\exp\{-c\log N+o(\log N)\}.
\end{equation}
For $t=-a_Nu$, the left-tail expansion
\eqref{eq:TW-left-intro} gives
\[
 F_2(-a_Nu)
 =\exp\left\{-\frac1{12}(a_Nu)^3+O(\log a_N)\right\}.
\]
The relative error check is now
\begin{align*}
 \log\frac{\varepsilon_N^-}{F_2(-a_Nu)}
 &\le-c\log N+C a_N^{3/2}
 +\frac1{12}(a_Nu)^3+O(\log a_N)\\
 &=-c\log N+o(\log N),
\end{align*}
uniformly for $u\in[u_0,u_1]$.  Hence
$\varepsilon_N^-=o(F_2(-a_Nu))$,  and
\[
 \mathcal H_N(-a_Nu;\mathbf x)
 =F_2(-a_Nu)(1+o(1))
\]
uniformly on $\cG_N$.  Taking logarithms and using
$O(\log a_N)=o(a_N^3)$ proves
\eqref{eq:conditional-left-tail}.
\end{proof}

This comparison also makes the logarithmic-window restriction quantitative.

\begin{remark}\label{rem:log-window}
The left tail imposes the binding restriction.  For the right tail, the
relative error test compares $\varepsilon_N^+$ with
$\exp\{-c a_N^{3/2}\}$, so it would be enough to assume
$a_N^{3/2}=o(\log N)$.  For the left tail, the target probability is instead
\[
F_2(-a_Nx)=\exp\{-a_N^3x^3/12+O(\log a_N)\},
\qquad x>0,
\]
so the ratio test also contains a factor $\exp\{c a_N^3\}$.  This is why the
binding condition is $a_N^3=o(\log N)$.  A larger window would require an
exponentially accurate finite-$N$ determinant expansion, rather than the
polynomially accurate trace-norm estimate obtained here.  This is the
computational content of \Cref{rem:log-window-statement}.
\end{remark}


\section{Proof of the phase diagram}\label{sec:phase-proof}


The final step combines the Airy and Gaussian contributions.  The
displacement $s_N$ is $\cF_N$-measurable, whereas the conditional law of the
Airy fluctuation $\chi_N$ is described by the uniform estimates of
\Cref{prop:conditional-TW}.  Both are tied to the same diagonal
configuration, so we condition on $\cF_N$.  On the event $\cG_N$, the conditional
Airy bounds are deterministic and uniform, while the displacement is governed
by its Gaussian MDP.  The following conditional convolution principle
packages this step; in the application, $X_N=\chi_N$ and $Y_N=s_N$.

\begin{lemma}
\label{lem:conditional-convolution}
Let $X_N+Y_N$ be a decomposition with $Y_N$ measurable with respect to a
$\sigma$-algebra $\cF_N$.  Let $a_N\to\infty$, and let $\alpha_N a_N\to\infty$.
Assume the following.

\begin{enumerate}[label=\textup{(\roman*)},leftmargin=2.7em]
\item There are events $\cG_N\in\cF_N$ whose complements are
superexponentially small at the speeds
$a_N^{3/2}$, $a_N^3$, and $\alpha_N^2a_N^2$.
\item Uniformly on $\cG_N$ and uniformly for $u$ in compact subsets of
$(0,\infty)$,
\begin{align*}
 \log\Prob(X_N>a_Nu\mid\cF_N)
 =-a_N^{3/2}\left(\frac43u^{3/2}+o(1)\right),\\
 \log\Prob(X_N\le-a_Nu\mid\cF_N)
 =-a_N^3\left(\frac1{12}u^3+o(1)\right).
\end{align*}
\item The variables $Y_N/a_N$ satisfy an MDP with speed
$\alpha_N^2a_N^2$ and rate $v^2/(2\sigma^2)$.
\end{enumerate}

Here and below, fixed versions of the conditional probabilities are used.
Uniformity on $\cG_N$ means that the displayed deterministic error bounds hold
for almost every outcome in $\cG_N$.

Then the six conclusions of \Cref{thm:main} hold with $Z_N$ replaced by
$X_N+Y_N$.
\end{lemma}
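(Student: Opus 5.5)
We prove matching upper and lower bounds for each of the six limits by conditioning on $\cF_N$. The conditional tail of $X_N$ is known deterministically on $\cG_N$ by (ii), and the law of $Y_N$ is controlled by (iii). For the right tail write, on $\cG_N$,
\[
 \Prob(X_N+Y_N>a_Nx\mid\cF_N)=\Prob(X_N>a_Nx-Y_N\mid\cF_N).
\]
Integrating over $Y_N$ gives a Laplace-type integral of the form
\[
 \E\bigl[\exp\{-a_N^{3/2}(\tfrac43(x-Y_N/a_N)_+^{3/2})\}\bigr],
\]
with the Gaussian MDP supplying the density of $Y_N/a_N$ at speed $\alpha_N^2a_N^2=\theta_N^+a_N^{3/2}$. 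A Varadhan-type argument then yields the infimal convolution $J_+^{(\theta)}$ in the critical case. The two degenerate cases follow by the same computation, after identifying which speed dominates. The left tail is handled identically, using $\theta_N^-$ and the relation $\alpha_N^2a_N^2=\theta_N^-a_N^3$.

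\textbf{Lower bound.} Fix a split $x=u+v$ with $u,v>0$ and a small $\eps>0$. Restrict to the $\cF_N$-measurable event
\[
 E_N=\cG_N\cap\{Y_N/a_N>v-\eps\}.
\]
On $E_N$ we have
\[
 \Prob(X_N>a_N(u+\eps)\mid\cF_N)\ge\exp\{-a_N^{3/2}(\tfrac43(u+\eps)^{3/2}+o(1))\},
\]
uniformly by (ii). This gives
\[
 \Prob(X_N+Y_N>a_Nx)\ge e^{-a_N^{3/2}(\frac43(u+\eps)^{3/2}+o(1))}\,\Prob(E_N).
\]
By (iii) and (i), $\log\Prob(E_N)\ge-\alpha_N^2a_N^2\bigl((v-\eps)^2/(2\sigma^2)+o(1)\bigr)$, using the open-set lower bound. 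Dividing by the relevant speed and optimizing over $u$ gives the lower bounds in all three regimes. For (R1) let $v\downarrow0$; this needs care, since $v$ can be taken slightly negative: $\Prob(Y_N/a_N>-\eps)\to1$, with any $\eps>0$ allowed. For (R3) let $u\downarrow0$; we must check that $\Prob(X_N>a_N\eps\mid\cF_N)$ costs only $a_N^{3/2}\cdot O(\eps^{3/2})=o(\alpha_N^2a_N^2)$. This holds because $\theta_N^+\to0$, and the cost is uniform by (ii) on the compact set $\{\eps\}$. The left tail is identical, with the conditional left estimate in place of the right one.

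\textbf{Upper bound, which is the main obstacle.} Here $Y_N/a_N$ ranges over all of $\R$, while (ii) controls only $u$ in compacts of $(0,\infty)$. We discretize. Fix a mesh $0=v_0<v_1<\dots<v_k=x$ and decompose according to $Y_N/a_N\in(v_{i-1},v_i]$. The pieces are $Y_N/a_N\le0$, which needs the conditional tail at $u=x$ only; $Y_N/a_N>x-\eps'$, whose Gaussian cost is at least $(x-\eps')^2/(2\sigma^2)$ by the closed-set upper bound; and $\cG_N^c$, which is negligible by (i). On each middle cell we use monotonicity of conditional tails:
\[
 \Prob(X_N>a_N(x-v_i)\mid\cF_N)\le e^{-a_N^{3/2}(\frac43(x-v_i)^{3/2}-o(1))}.
\]
Here $x-v_i$ lies in the compact set $[x-v_{k-1},x]$, and each cell probability is bounded via the closed set $[v_{i-1},v_i]$. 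Since there are finitely many terms, the largest one wins; refining the mesh and sending $\eps'\to0$ recovers $J_+^{(\theta)}(x)$ by continuity of the objective.

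In the pure regimes we bound each term by the minimum of its two costs measured at the dominating speed. For (R1), with $\theta_N^+\to\infty$, every cell with $v_i$ bounded away from $0$ is superexponentially small at speed $a_N^{3/2}$, so only the Tracy--Widom cost near $u=x$ survives. (R3) follows symmetrically, using $\theta_N^+\to0$. The same mesh argument, with the cubic left rate and $\theta_N^-$, gives (L1)--(L3).
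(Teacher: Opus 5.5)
Your upper-bound scheme is correct and is essentially the paper's: a finite cover of the displacement, monotonicity of the conditional tail on each cell, and closed-set MDP bounds. Covering $\R$ by $\{Y_N\le 0\}$, finitely many cells, and $\{Y_N/a_N> x-\eps'\}$ even lets you bypass the paper's exponential-tightness step. The crossover and Tracy--Widom lower bounds are also fine, including your treatment of (R1) with $v$ slightly negative.

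The gap is in the lower bounds for the Gaussian regimes (R3) and (L3), where your justification is backwards. If $\theta_N^+\to0$, then $\alpha_N^2a_N^2=\theta_N^+a_N^{3/2}=o(a_N^{3/2})$. So for every fixed $\eps>0$, the conditional cost of $\{X_N>a_N\eps\}$, measured at speed $\alpha_N^2a_N^2$, is $(\frac43\eps^{3/2}+o(1))/\theta_N^+\to\infty$. Your lower bound therefore degenerates to $-\infty$. Letting $\eps=\eps_N\downarrow0$ does not help, because (ii) is uniform only on compact subsets of $(0,\infty)$. The same failure occurs in (L3): there $\alpha_N^2a_N^2=\theta_N^-a_N^3=o(a_N^3)$, while $\{X_N\le-a_N\eps\}$ costs $\eps^3a_N^3/12$.

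The missing idea is the mirror image of your (R1) trick. In the Gaussian regimes the Airy variable should be conditionally typical rather than asked to contribute, and this typicality comes from the \emph{opposite} tail in (ii), evaluated at a fixed point.

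For (R3), the left-tail estimate at $u=\eta$ gives, uniformly on $\cG_N$, $\Prob(X_N\le-\eta a_N\mid\cF_N)\le e^{-a_N^3(\eta^3/12-o(1))}\to0$. Hence $\Prob(X_N>-\eta a_N\mid\cF_N)\ge\frac12$ there for large $N$. Therefore $\Prob(X_N+Y_N>a_Nx)\ge\frac12\Prob(\cG_N\cap\{Y_N/a_N>x+\eta\})$. Then (i) together with the open-set MDP bound gives the exponent $-(x+\eta)^2/(2\sigma^2)$, and you let $\eta\downarrow0$.

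For (L3), use the right-tail estimate at $u=\eta$ to get $\Prob(X_N\le\eta a_N\mid\cF_N)\ge\frac12$ on $\cG_N$, and restrict to $\{Y_N/a_N<-x-\eta\}$. This is how the paper handles these two regimes, and also the $u=0$ endpoints of its crossover lower bounds.
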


\begin{proof}
We write
\[
 \mathsf{t}_N^+=a_N^{3/2},
 \qquad
 \mathsf{t}_N^-=a_N^3,
 \qquad
 \mathsf{g}_N=\alpha_N^2a_N^2.
\]
Thus $\mathsf{g}_N/\mathsf{t}_N^+=\theta_N^+$ and
$\mathsf{g}_N/\mathsf{t}_N^-=\theta_N^-$.  We write
$y_+=\max\{y,0\}$ for the positive part of $y$.  Fix $x>0$.  All auxiliary
parameters below are chosen independently of $N$.

The six cases follow a common pattern.  For an upper bound, we localize or
truncate the displacement $Y_N/a_N$ and combine the conditional Airy and
Gaussian rate contributions.  For a lower bound, we prescribe one
near-optimal displacement and then require the conditional Airy variable to
complete the deviation.  The event $\cG_N$ allows the conditional estimates
to be used as deterministic bounds throughout.  We first state three
consequences of the assumptions that will be used repeatedly.

\medskip
\noindent{{\it Three uniform estimates.}} \
Applying the MDP closed-set upper bound to
$\{v:|v|\ge M\}$ gives
\[
 \limsup_{N\to\infty}\frac1{\mathsf{g}_N}
 \log\Prob(|Y_N|>Ma_N)\le-\frac{M^2}{2\sigma^2}.
\]
Letting $M\to\infty$ gives exponential tightness:
\begin{equation}\label{eq:Y-exp-tight}
 \lim_{M\to\infty}\limsup_{N\to\infty}
 \frac1{\mathsf{g}_N}\log\Prob(|Y_N|>Ma_N)=-\infty.
\end{equation}
If $I$ is a fixed nonempty open interval, then
\begin{equation}\label{eq:Y-good-lower}
 \liminf_{N\to\infty}\frac1{\mathsf{g}_N}
 \log\Prob(Y_N/a_N\in I,\ \cG_N)
 \ge-\inf_{v\in I}\frac{v^2}{2\sigma^2}.
\end{equation}
Indeed, the MDP gives the same lower bound without $\cG_N$, and
the subtraction can be quantified as follows.  Define 
$c_I=\inf_{v\in I}v^2/(2\sigma^2)$.  For every $\varepsilon>0$ and all
sufficiently large $N$, the MDP lower bound and superexponential estimate give
\[
 \Prob(Y_N/a_N\in I)\ge e^{-\mathsf{g}_N(c_I+\varepsilon)},
 \qquad
 \Prob(\cG_N^c)\le e^{-\mathsf{g}_N(c_I+2\varepsilon)}.
\]
Consequently,
\[
 \Prob(Y_N/a_N\in I,\cG_N)
 \ge e^{-\mathsf{g}_N(c_I+\varepsilon)}
 \bigl(1-e^{-\varepsilon\mathsf{g}_N}\bigr),
\]
which proves \eqref{eq:Y-good-lower} because $\mathsf{g}_N=(\alpha_Na_N)^2
\to\infty$ and $\varepsilon$ is arbitrary.  The corresponding closed-set
upper bound for an intersection with $\cG_N$ follows directly from the MDP.

Finally, the uniform conditional estimates imply that, for every compact
$K\subset(0,\infty)$, there is a deterministic $\eps_N(K)\downarrow0$ such
that on $\cG_N$ the following bounds hold for $u\in K$; we suppress the
dependence of $\eps_N$ on $K$:
\begin{align}
 e^{-\mathsf{t}_N^+(\frac43u^{3/2}+\eps_N)}
 &\le \Prob(X_N>a_Nu\mid\cF_N)
 \le e^{-\mathsf{t}_N^+(\frac43u^{3/2}-\eps_N)},                 \label{eq:uniform-cond-right}\\
 e^{-\mathsf{t}_N^-(\frac1{12}u^3+\eps_N)}
 &\le \Prob(X_N\le-a_Nu\mid\cF_N)
 \le e^{-\mathsf{t}_N^-(\frac1{12}u^3-\eps_N)}.                \label{eq:uniform-cond-left}
\end{align}
These two-sided bounds are the exact form in which
\Cref{prop:conditional-TW} enters.  They also imply that a moderate
Airy fluctuation is conditionally typical: for every fixed $\eta>0$,
\begin{align}
 \Prob(X_N>-\eta a_N\mid\cF_N)
 &=1-o(1),                                             \label{eq:conditional-typical-right}\\
 \Prob(X_N\le \eta a_N\mid\cF_N)
 &=1-o(1),                                             \label{eq:conditional-typical-left}
\end{align}
uniformly on $\cG_N$.  Indeed, their complementary probabilities are
controlled by
\eqref{eq:uniform-cond-left} and \eqref{eq:uniform-cond-right}, respectively.
More precisely, they are bounded by
$\exp\{-c_\eta \mathsf{t}_N^-+o(\mathsf{t}_N^-)\}$ and
$\exp\{-c_\eta \mathsf{t}_N^++o(\mathsf{t}_N^+)\}$, respectively.  These
observations provide the required uniform conditional lower bound after
restriction to a rare $Y_N$ event.

\medskip
\noindent\emph{1. Right-tail crossover: $\theta_N^+\to\theta\in(0,\infty)$.}
We work at speed $\mathsf{t}_N^+$.  In this regime the conditional Airy
and Gaussian rate contributions are comparable, so both enter the
variational problem.  For the upper bound we
first localize the scaled displacement $Y_N/a_N$.  Fix $M>x+1$ and
cover $[-M,M]$ by finitely many closed intervals
$I_k=[v_k,v_k+\eta]$, with the endpoint intervals truncated at $\pm M$.  On
$\{Y_N/a_N\in I_k\}\cap\cG_N$,
\[
 \Prob(X_N+Y_N>a_Nx\mid\cF_N)
 \le
 \Prob\bigl(X_N>a_N(x-v_k-\eta)\mid\cF_N\bigr).
\]
If $x-v_k-\eta\le0$, the last probability is bounded by one and carries zero
$X_N$ contribution.  For the finitely many remaining intervals, the finite
set of positive thresholds has a positive minimum, so the compact uniform
conditional right-tail estimate applies.
For fixed $M$ and mesh size $\eta$, the uniform conditional estimate, the
MDP upper bound for $Y_N/a_N$, and a finite union bound give the following
estimate.  Finite sums do not change the exponential rate.  Indeed, for each
cell,
\begin{align*}
 &\Prob(X_N+Y_N>a_Nx,\ Y_N/a_N\in I_k,\ \cG_N)\\
 &\quad\le \Prob(Y_N/a_N\in I_k)
 \exp\left\{-\mathsf{t}_N^+
 \left(\frac43(x-v_k-\eta)_+^{3/2}-o(1)\right)\right\},
\end{align*}
where the error is uniform over the finitely many cells.  The MDP upper bound
is applied to $\overline I_k$, and
$\mathsf{g}_N/\mathsf{t}_N^+\to\theta$.  Hence
\begin{align*}
 &\limsup_{N\to\infty}\frac1{\mathsf{t}_N^+}
 \log\Prob(X_N+Y_N>a_Nx,\ |Y_N|\le Ma_N,\ \cG_N)\\
 &\qquad\le -\min_k\left\{
 \frac43(x-v_k-\eta)_+^{3/2}
 +\theta\inf_{v\in\overline I_k}\frac{v^2}{2\sigma^2}
 \right\}.
\end{align*}
The complement $\cG_N^c$ is superexponentially negligible.  By
\eqref{eq:Y-exp-tight} and $\mathsf{g}_N/\mathsf{t}_N^+\to\theta>0$,
\[
 \lim_{M\to\infty}\limsup_{N\to\infty}
 \frac1{\mathsf{t}_N^+}\log\Prob(|Y_N|>Ma_N)=-\infty.
\]
For fixed $M$, let the mesh size tend to zero.  Continuity of the objective
gives the corresponding infimum over $[-M,M]$.  We may then let $M\to\infty$:
the preceding exponential tightness estimate removes the complement, and
coercivity of the quadratic term makes the compact infima converge to the
infimum over $\R$.  Hence
\begin{align}
 \limsup_{N\to\infty}\frac1{\mathsf{t}_N^+}
 \log\Prob(X_N+Y_N>a_Nx)
 \le-
 \inf_{v\in\R}
 \left\{
 \frac43(x-v)_+^{3/2}
 +\frac{\theta v^2}{2\sigma^2}
 \right\}.                                             \label{eq:right-cross-upper}
\end{align}
The minimizer in \eqref{eq:right-cross-upper} lies in $[0,x]$: a negative
$v$ increases both terms relative to $v=0$, while a value $v>x$ can be
replaced by $x$ and strictly lowers the quadratic term.  Setting $u=x-v$
identifies the right-hand side with
$-J_+^{(\theta)}(x)$.

For the lower bound, we reverse the strategy: choose a division $u+v=x$ of
the required deviation, force the displacement to lie near $v$, and require
the conditional Airy variable to exceed a slightly larger threshold.  Take
$u\in(0,x]$, set $v=x-u$, and fix $\eta>0$.  On the event
$Y_N/a_N\in(v-\eta,v+\eta)$, it is enough to require
$X_N>a_N(u+2\eta)$.  Therefore
\begin{align*}
 \Prob(X_N+Y_N>a_Nx)
 &\ge
 \E\Bigl[
 \one_{\{Y_N/a_N\in(v-\eta,v+\eta)\}\cap\cG_N}
 \Prob(X_N>a_N(u+2\eta)\mid\cF_N)
 \Bigr].
\end{align*}
By \eqref{eq:uniform-cond-right}, the conditional factor is at least
\[
 \exp\left\{-\mathsf{t}_N^+
 \left(\frac43(u+2\eta)^{3/2}+o(1)\right)\right\}
\]
uniformly on $\cG_N$.  Equation \eqref{eq:Y-good-lower}, together with
$\mathsf{g}_N/\mathsf{t}_N^+\to\theta$, therefore gives
\[
 \liminf_{N\to\infty}\frac1{\mathsf{t}_N^+}
 \log\Prob(X_N+Y_N>a_Nx)
 \ge
 -\frac43(u+2\eta)^{3/2}
 -\frac{\theta v^2}{2\sigma^2}.
\]
For the endpoint $u=0$, choose instead
$Y_N/a_N\in(x+\eta,x+2\eta)$ and require
$X_N>-\eta a_N$.  By
\eqref{eq:conditional-typical-right}, this requirement has conditional
probability $1-o(1)$ uniformly on $\cG_N$, so the endpoint has zero
$X_N$ contribution.  More explicitly, \eqref{eq:Y-good-lower} yields
\[
 \liminf_{N\to\infty}\frac1{\mathsf{t}_N^+}
 \log\Prob(X_N+Y_N>a_Nx)
 \ge-\frac{\theta(x+\eta)^2}{2\sigma^2}.
\]
Sending $\eta\downarrow0$ and optimizing over $u\in[0,x]$ proves the right
crossover lower bound.

\medskip
\noindent\emph{2. Right Tracy--Widom regime: $\theta_N^+\to\infty$.}
Here a displacement of order $a_N$ is exponentially more expensive than the
conditional Airy fluctuation.  The proof therefore localizes $Y_N/a_N$ near
zero.  For $0<\eta<x$, the union bound gives
\begin{align}
 \Prob(X_N+Y_N>a_Nx)
 &\le
 \Prob(Y_N>\eta a_N)
 +\E\left[
 \one_{\cG_N}
 \Prob(X_N>a_N(x-\eta)\mid\cF_N)
 \right]
 +\Prob(\cG_N^c).                                      \label{eq:right-TW-upper}
\end{align}
The MDP upper bound for the closed set
$\{v:|v|\ge\eta\}$ gives
\[
 \limsup_{N\to\infty}\frac1{\mathsf{t}_N^+}
 \log\Prob(|Y_N|>\eta a_N)=-\infty,
\]
because $\mathsf{g}_N/\mathsf{t}_N^+=\theta_N^+\to\infty$.  The good-event
complement is superexponentially small at the same speed.  Thus the first and
third terms in \eqref{eq:right-TW-upper} are negligible, while the conditional
right-tail estimate gives the upper exponent
$-4(x-\eta)^{3/2}/3$.

For the lower bound,
\begin{align}
 \Prob(X_N+Y_N>a_Nx)
 &\ge
 \E\left[
 \one_{\{|Y_N|\le\eta a_N\}\cap\cG_N}
 \Prob(X_N>a_N(x+\eta)\mid\cF_N)
 \right].                                             \label{eq:right-TW-lower}
\end{align}
The preceding superexponential bound, together with the corresponding bound
for $\cG_N^c$, gives
\[
 \frac1{\mathsf{t}_N^+}\log
 \Prob(|Y_N|\le\eta a_N,\ \cG_N)\longrightarrow0.
\]
Multiplying this event probability by the uniform conditional lower bound in
\eqref{eq:right-TW-lower} yields the exponent
$-4(x+\eta)^{3/2}/3$.  Sending $\eta\downarrow0$ proves (R1).

\medskip
\noindent\emph{3. Right Gaussian regime: $\theta_N^+\to0$.}
We work at speed $\mathsf{g}_N$.  At this speed, a moderate Airy
fluctuation is superexponentially unlikely, and the displacement produces the
right deviation.  For $0<\eta<x$,
\begin{align}
 \Prob(X_N+Y_N>a_Nx)
 &\le
 \Prob(Y_N>a_N(x-\eta))
 +\E\left[
 \one_{\cG_N}\Prob(X_N>\eta a_N\mid\cF_N)
 \right]
 +\Prob(\cG_N^c).                                      \label{eq:right-G-upper}
\end{align}
Uniformly on $\cG_N$, the second term is at most
\[
 \exp\left\{-\mathsf{t}_N^+
 \left(\frac43\eta^{3/2}+o(1)\right)\right\}.
\]
Since $\mathsf{t}_N^+/\mathsf{g}_N=1/\theta_N^+\to\infty$, this term and
$\Prob(\cG_N^c)$ are superexponentially small at speed $\mathsf{g}_N$.  Thus
only the Gaussian term contributes, and the MDP upper bound gives at most
$-(x-\eta)^2/(2\sigma^2)$.

For the lower bound, use the event
$Y_N/a_N\in(x+\eta,x+2\eta)$ and require
$X_N>-\eta a_N$:
\begin{align}
 \Prob(X_N+Y_N>a_Nx)
 &\ge
 \E\left[
 \one_{\{Y_N/a_N\in(x+\eta,x+2\eta)\}\cap\cG_N}
 \Prob(X_N>-\eta a_N\mid\cF_N)
 \right].                                             \label{eq:right-G-lower}
\end{align}
By \eqref{eq:conditional-typical-right}, the conditional factor in
\eqref{eq:right-G-lower} equals $1-o(1)$ uniformly on $\cG_N$.  Applying
\eqref{eq:Y-good-lower} to the interval $(x+\eta,x+2\eta)$ therefore gives
the lower exponent $-(x+\eta)^2/(2\sigma^2)$.  Letting
$\eta\downarrow0$ proves (R3).

\medskip
\noindent\emph{4. Left-tail crossover: $\theta_N^-\to\theta\in(0,\infty)$.}
We now work at speed $\mathsf{t}_N^-$.  The left-tail argument has the same
structure as the right-tail crossover, but a negative displacement now
assists the event.  Use the same finite closed cover for $Y_N/a_N$.  If
$Y_N/a_N\in[v_k,v_k+\eta]$, then
\[
 \Prob(X_N+Y_N\le-a_Nx\mid\cF_N)
 \le
 \Prob\bigl(X_N\le-a_N(x+v_k)\mid\cF_N\bigr).
\]
When $x+v_k\le0$, this is bounded by one.  For the finitely many remaining
intervals, the finite set of positive thresholds has a positive minimum, so
the compact uniform conditional left-tail estimate applies.
More precisely,
\begin{align*}
 &\Prob(X_N+Y_N\le-a_Nx,\ Y_N/a_N\in I_k,\ \cG_N)\\
 &\quad\le \Prob(Y_N/a_N\in I_k)
 \exp\left\{-\mathsf{t}_N^-
 \left(\frac1{12}(x+v_k)_+^3-o(1)\right)\right\}.
\end{align*}
The error is uniform over the finite cover.  The MDP upper bound on
$\overline I_k$ and $\mathsf{g}_N/\mathsf{t}_N^-\to\theta$ therefore give, for
fixed $M$ and $\eta$,
\begin{align*}
 \limsup_{N\to\infty}\frac1{\mathsf{t}_N^-}
 \log\Prob(X_N+Y_N\le-a_Nx,\ |Y_N|\le Ma_N,\ \cG_N)
 \le -\min_k\left\{
 \frac1{12}(x+v_k)_+^3
 +\theta\inf_{v\in\overline I_k}\frac{v^2}{2\sigma^2}
 \right\}.
\end{align*}
The good-event complement is negligible.  For fixed $M$, letting the mesh
size $\eta\downarrow0$ gives the infimum over $[-M,M]$.  Since
$\mathsf{g}_N/\mathsf{t}_N^-\to\theta>0$, exponential tightness in
\eqref{eq:Y-exp-tight} removes $|Y_N|>Ma_N$ as $M\to\infty$, while coercivity
of the quadratic term makes the compact infima converge to the infimum over
$\R$.  Therefore
\begin{align}
 \limsup_{N\to\infty}\frac1{\mathsf{t}_N^-}
 \log\Prob(X_N+Y_N\le-a_Nx)
 \le-
 \inf_{v\in\R}
 \left\{
 \frac1{12}(x+v)_+^3
 +\frac{\theta v^2}{2\sigma^2}
 \right\}.                                             \label{eq:left-cross-upper}
\end{align}
The minimizer lies in $[-x,0]$: values $v>0$ increase both terms relative to
$v=0$, and values $v<-x$ can be replaced by $-x$ without increasing the
Tracy--Widom term while lowering the quadratic term.  Setting $u=x+v$ reduces
\eqref{eq:left-cross-upper} to $-J_-^{(\theta)}(x)$.

For the lower bound, we again prescribe a division of the deviation.  Choose
$u\in(0,x]$, set $v=u-x\in[-x,0]$, and restrict to
$Y_N/a_N\in(v-\eta,v+\eta)$.  If in addition
$X_N\le-a_N(u+2\eta)$, then $X_N+Y_N\le-a_Nx$.  The conditional lower
estimate \eqref{eq:uniform-cond-left} and
\eqref{eq:Y-good-lower}, with $\mathsf{g}_N/\mathsf{t}_N^-\to\theta$, give
\[
 \liminf_{N\to\infty}\frac1{\mathsf{t}_N^-}
 \log\Prob(X_N+Y_N\le-a_Nx)
 \ge-\frac1{12}(u+2\eta)^3
 -\frac{\theta(x-u)^2}{2\sigma^2}.
\]
At $u=0$, use
$Y_N/a_N\in(-x-2\eta,-x-\eta)$ and require $X_N\le\eta a_N$; by
\eqref{eq:conditional-typical-left}, this requirement has probability
$1-o(1)$ uniformly on $\cG_N$.  Hence \eqref{eq:Y-good-lower} gives
\[
 \liminf_{N\to\infty}\frac1{\mathsf{t}_N^-}
 \log\Prob(X_N+Y_N\le-a_Nx)
 \ge-\frac{\theta(x+\eta)^2}{2\sigma^2}.
\]
Optimization and $\eta\downarrow0$ prove (L2).

\medskip
\noindent\emph{5. Left Tracy--Widom regime: $\theta_N^-\to\infty$.}
As on the right, the Gaussian displacement is too expensive to contribute at
the Tracy--Widom speed, so the proof localizes $Y_N/a_N$ near zero.  For
$0<\eta<x$,
\begin{align*}
 \Prob(X_N+Y_N\le-a_Nx)
 &\le
 \Prob(Y_N<-\eta a_N)
 +\E\left[
 \one_{\cG_N}
 \Prob(X_N\le-a_N(x-\eta)\mid\cF_N)
 \right]
 +\Prob(\cG_N^c),                                      \\
 \Prob(X_N+Y_N\le-a_Nx)
 &\ge
 \E\left[
 \one_{\{|Y_N|\le\eta a_N\}\cap\cG_N}
 \Prob(X_N\le-a_N(x+\eta)\mid\cF_N)
 \right].
\end{align*}
The MDP upper bound and
$\mathsf{g}_N/\mathsf{t}_N^-=\theta_N^-\to\infty$ imply
\[
 \limsup_{N\to\infty}\frac1{\mathsf{t}_N^-}
 \log\Prob(|Y_N|>\eta a_N)=-\infty.
\]
Thus the Gaussian displacement is superexponentially negligible at speed
$\mathsf{t}_N^-$.  Together with the good-event estimate, this yields
\[
 \frac1{\mathsf{t}_N^-}\log
 \Prob(|Y_N|\le\eta a_N,\ \cG_N)\longrightarrow0.
\]
The two uniform conditional left-tail estimates, followed by
$\eta\downarrow0$, prove (L1).

\medskip
\noindent\emph{6. Left Gaussian regime: $\theta_N^-\to0$.}
At speed $\mathsf{g}_N$, a moderate left Airy fluctuation is
superexponentially unlikely.  A negative displacement of order $-a_Nx$ gives
the leading contribution, while $X_N$ remains conditionally typical.
For $0<\eta<x$,
\begin{align}
 \Prob(X_N+Y_N\le-a_Nx)
 &\le
 \Prob(Y_N\le-a_N(x-\eta))
 +\E\left[
 \one_{\cG_N}\Prob(X_N\le-\eta a_N\mid\cF_N)
 \right]
 +\Prob(\cG_N^c).                                      \label{eq:left-G-upper}
\end{align}
Uniformly on $\cG_N$, the second term is at most
\[
 \exp\left\{-\mathsf{t}_N^-
 \left(\frac1{12}\eta^3+o(1)\right)\right\}.
\]
Since $\mathsf{t}_N^-/\mathsf{g}_N=1/\theta_N^-\to\infty$, this term and the
good-event complement are superexponentially small at speed $\mathsf{g}_N$.
The Gaussian MDP therefore gives the upper bound
$-(x-\eta)^2/(2\sigma^2)$.

For the lower bound, restrict to
$Y_N/a_N\in(-x-2\eta,-x-\eta)$ and require $X_N\le\eta a_N$:
\begin{align}
 \Prob(X_N+Y_N\le-a_Nx)
 &\ge
 \E\left[
 \one_{\{Y_N/a_N\in(-x-2\eta,-x-\eta)\}\cap\cG_N}
 \Prob(X_N\le\eta a_N\mid\cF_N)
 \right].                                             \label{eq:left-G-lower}
\end{align}
Equation \eqref{eq:conditional-typical-left} gives a deterministic sequence
$\delta_N\downarrow0$ such that the conditional factor in
\eqref{eq:left-G-lower} is at least $1-\delta_N$ uniformly on $\cG_N$. 
The uniform conditional estimate is essential here because
$\Prob(X_N>\eta a_N)$ may exceed the Gaussian left-tail probability in this
regime.  Applying the MDP lower bound to
\eqref{eq:left-G-lower} gives the lower exponent
$-(x+\eta)^2/(2\sigma^2)$; sending $\eta\downarrow0$ proves (L3).
\end{proof}

We now verify that the abstract convolution principle applies to the present
random matrix model.

\begin{proof}[Proof of \Cref{thm:main}]
\Cref{sec:shift,sec:kernel,sec:airy,sec:conditional-tails}
concern the truncated ensemble, with the hats suppressed after \Cref{lem:truncation-coupling}.
We first apply the abstract lemma to that ensemble.  Use the exact
decomposition \eqref{eq:Z-decomposition} with $X_N=\chi_N$ and $Y_N=s_N$.
By \Cref{prop:conditional-TW}, $X_N$ satisfies the conditional right and
left estimates in \Cref{lem:conditional-convolution}, uniformly on
$\cG_N$.  By \Cref{prop:shift-mdp}, with
$b_N=\alpha_N a_N$,
\[
 \frac{s_N}{a_N}=\frac{U_N}{\alpha_N a_N}
\]
satisfies an MDP with speed $\alpha_N^2a_N^2$ and rate
$v^2/(2\sigma^2)$.  The hypotheses on $b_N$ hold because
$\alpha_N a_N\to\infty$ and
$\alpha_N a_N=N^{o(1)}=o(\sqrt N)$.  Finally,
\Cref{lem:good-event} makes $\cG_N^c$ superexponentially small at all
relevant speeds.  The six limits for the truncated ensemble follow from
\Cref{lem:conditional-convolution}.

It remains to return to the original matrix.  Define
\[
 \widehat Z_N=\frac{\sqrt N}{\alpha_N}(\widehat\lambda_N-R_N).
\]
By \Cref{lem:truncation-coupling}, the original and truncated largest
eigenvalues coincide outside an event that is superexponentially small at
every speed appearing in the six limits.  For each one-sided Borel set
$\mathcal E_N$ used above,
\[
 \left|\Prob(Z_N\in \mathcal E_N)-
 \Prob(\widehat Z_N\in \mathcal E_N)\right|
 \le\Prob(\lambda_N\ne\widehat\lambda_N).
\]
To make the last step explicit, fix one of the six regimes, denote its
speed by $\upsilon_N$, and let $\pi_N$ and $\widehat\pi_N$ be the corresponding
one-sided probabilities for the original and truncated matrices.  The
truncated result gives
$\log\widehat\pi_N/\upsilon_N\to-I$ for a finite $I>0$.  For every $M>I+1$,
\Cref{lem:truncation-coupling} gives, for all large $N$,
\[
 |\pi_N-\widehat\pi_N|\le e^{-M\upsilon_N},
 \qquad
 \widehat\pi_N\ge e^{-(I+1)\upsilon_N}.
\]
Hence
$|\pi_N/\widehat\pi_N-1|\le e^{-(M-I-1)\upsilon_N}\to0$, and therefore
$\log\pi_N/\upsilon_N\to-I$.  This proves the same six limits for the original
matrix $M_N$.
\end{proof}

The fixed-coupling corollary is an immediate reading of the two limiting
transition parameters.

\begin{proof}[Proof of \Cref{cor:fixed-alpha}]
If $\alpha_N\equiv\alpha$, then
$\theta_N^+=\alpha^2a_N^{1/2}\to\infty$, so
\eqref{eq:fixed-right} is (R1).  Also
$\theta_N^-=\alpha^2/a_N\to0$, and (L3) gives
\[
 \frac1{\alpha^2a_N^2}
 \log\Prob(Z_N\le-a_Nx)
 \longrightarrow-\frac{x^2}{2\sigma^2},
\]
which is equivalent to \eqref{eq:fixed-left}.
\end{proof}


\section{Concluding remarks}\label{sec:conclude-rmk}


We end by summarizing the main results and pointing out several directions for future work.

\smallskip

The phase diagram results from the interaction of two sources of
fluctuation.  Given the
diagonal disorder, the largest eigenvalue is controlled by a cubic Airy
saddle and has Tracy--Widom moderate tails.  On the $Z_N$ (Airy) scale, the disorder itself moves the
conditional edge by a resolvent statistic whose leading term is
$\alpha_N^{-1}N^{-1/2}\sum_j\xi_j$ and therefore has Gaussian moderate
deviations.  The asymmetry between the Tracy--Widom exponents $3/2$ and $3$
produces the two different transition scales,
$\alpha_N\asymp a_N^{-1/4}$ on the right and
$\alpha_N\asymp a_N^{1/2}$ on the left.  In particular, fixed coupling has a
Tracy--Widom right tail but a Gaussian left tail.

The proof isolates three components that may be useful in related problems.
First, the truncation coupling is superexponentially accurate at every
moderate speed in the theorem.  Second, the resolvent expansion identifies
the random edge displacement with a normalized sum and yields its Gaussian
MDP.  Third, the conditional kernel analysis gives a trace-norm Airy
approximation uniformly over good diagonal configurations.  The
conditional convolution argument in \Cref{sec:phase-proof} is based on this
uniformity.

The logarithmic restriction $a_N^3=o(\log N)$ is a limitation of the
quantitative transfer from the finite-$N$ conditional kernel to the very
small left Fredholm determinant.  This restriction arises at the final
relative determinant comparison: a polynomially accurate trace-norm estimate is
negligible relative to $F_2(-a_Nx)$ only in the logarithmic window.
Extending the theorem to a regime where $a_N^3$ is comparable to, or larger
than, $\log N$ would require a much sharper finite-$N$ expansion, uniform
over the random good set, with an error that is exponentially small on the
left-tail scale.

Several closely related directions seem natural.  First, one may try to
remove the subpolynomial condition on $\alpha_N$ and allow polynomially
growing or vanishing coupling; this would require new truncation, saddle,
and contour estimates.  Second, it would be interesting to develop the
Gaussian orthogonal ensemble (GOE),
$\beta$-ensemble, or non-invariant analogues of the present phase diagram,
where the conditional determinantal structure must be replaced by Pfaffian or
universality arguments.  Third, weakening the Cram\'er condition on the
diagonal entries could replace the Gaussian displacement MDP by
different extreme-value behavior and may produce new transition laws.  These
directions retain the same basic interaction between an Airy fluctuation
and a random motion of the edge, but each one changes a different technical
part of the present proof.

\appendix


\section{Proof of Lemma~\ref{lem:shifted-contours}}\label{app:shifted-contours}


\begin{proof}

The principal difficulty is that the finite-$N$ phase $\Phi_N$ is defined
through the random resolvent $f_N$, whereas the Airy limit involves the pure
cubic $\zeta^3/3$.  The contour separation parameter $d$ shifts the
integration rays away from the saddle, and we must verify that the
displaced rays retain the decay obtained in \Cref{lem:global-decay} for the
unshifted rays $C_1,\dots,C_4$.

We give the argument on $\Gamma^+$; the other three branches are completely
analogous, as we indicate at the end of each step.  Write
\[
 r_0(t)=t(1+{\rm i}),
 \qquad
 r(t)=d+r_0(t)=d+t+{\rm i}t,
\]
so that $r(t)$ is the parametrisation of the upper ray of $\Gamma^+$.
Let $c_0$ be the window exponent in \Cref{lem:cubic-expansion}, and denote by
$c_1>0$ the exponent of $N$ appearing in its error bound \eqref{eq:cubic-error}.
We fix once and for all an exponent
\[
 0<\eta<\min\{c_0,\;c_1/5,\;1/12\}.
\]
The reason for each constraint will become clear below: $c_0$ guarantees that
the cubic expansion is valid on the local window, $c_1/5$ ensures that the
error term $CN^{-c_1}(1+t^4)$ is $o(1)$ uniformly for $t\le N^{\eta}$, and
$1/12$ guarantees that the horizontal displacement is small compared with the
natural cubic scale.

\medskip
\noindent{{\it Step~1. The local window: $0\le t\le N^{\eta}$}.}

On this range the scaled increment $h=\ell_N r(t)$ satisfies
$|h|/(w_N-\tau_N)\le N^{\eta-1/3+o(1)}=o(1)$, so the Taylor expansion used in
\Cref{lem:cubic-expansion} is valid.  A direct computation gives the exact
algebraic identity
\[
 \Re\frac{(d+t+{\rm i}t)^3}{3}
 =\frac{d^3}{3}+d^2t-\frac23t^3.
\]
Note that $d^2t\leq(2d^3+t^3)/3$ for any $d>0,t>0$, so there exist constants
$c,C>0$ (depending only on $d$) such that
\[
 \Re\frac{(d+t+{\rm i}t)^3}{3}\le C-ct^3,
 \qquad t\ge0.
\]
Together with the uniform approximation \eqref{eq:cubic-error} from
\Cref{lem:cubic-expansion}, which asserts that $|\Phi_N(r)-r^3/3|$ is
uniformly small on this window, we immediately obtain
\[
 \Re\Phi_N(r(t))\le C-ct^3,
 \qquad 0\le t\le N^{\eta},
\]
which is the local piece of \eqref{eq:shifted-r-local}.

To prove the comparison \eqref{eq:shifted-r-comparison}, set
$\Delta_N(r)=\Phi_N(r)-r^3/3$.  By \eqref{eq:cubic-error} and the choice
$\eta<c_1/5$,
\[
 |\Delta_N(r(t))|
 \le CN^{-c_1}(1+t^4)
 \le CN^{-c_1}(1+N^{4\eta})
 =o(1),
\]
uniformly for $0\le t\le N^{\eta}$.  Hence, factoring out the Airy cubic,
\begin{align*}
 |e^{\Phi_N(r)}-e^{r^3/3}|
 &=e^{\Re(r^3/3)}|e^{\Delta_N(r)}-1|\\
 &\le e^{\Re(r^3/3)}\,|\Delta_N(r)|\,e^{|\Delta_N(r)|}\\
 &\le CN^{-c_1}(1+t^4)e^{-ct^3},
\end{align*}
where in the second line we used $|e^z-1|\le|z|e^{|z|}$ and in the third line
we absorbed the harmless factor $e^{o(1)}$ into the constant $C$.  This proves
\eqref{eq:shifted-r-comparison}.  The corresponding statements
\eqref{eq:shifted-s-local} and \eqref{eq:shifted-s-comparison} on $\Sigma$
follow from the identity
\[
 -\Re\frac{(-d-t+{\rm i}t)^3}{3}
 =\frac{d^3}{3}+d^2t-\frac23t^3,
\]
combined with the same error estimate for $\Phi_N$ on the left contours.

\medskip
\noindent{{\it Step~2. The mesoscopic range: $N^{\eta}\le t\le T_N^*$}.}

On this intermediate scale the cubic approximation of \Cref{lem:cubic-expansion}
is no longer pointwise accurate, but the unshifted rays $C_1,\dots,C_4$ already
carry strong decay by \Cref{lem:global-decay}.  Our task is to prove
that this decay persists under the horizontal shift by $d\ell_N$.  The key tool is a
Taylor expansion of the derivative $f_N'$ that controls the change of the real
part of the phase when we move horizontally from the unshifted point
$w_N+\ell_N r_0(t)$ to the shifted point $w_N+\ell_N r(t)$.

Let $\zeta$ be a complex number with $|\zeta|\le 3T_N^*/2$ and let
$u\in[0,d]$.  Taylor's formula with integral remainder for $f_N'$ at
$w_N$ gives
\begin{align}
 \ell_N f_N'(w_N+\ell_N(\zeta+u))
 &=\ell_N^2f_N''(w_N)(\zeta+u)\nonumber\\
 &\quad+\ell_N^3(\zeta+u)^2
 \int_0^1(1-v)
 f_N^{(3)}(w_N+v\ell_N(\zeta+u))\,\dd v.
 \label{eq:app-scaled-derivative-bound-start}
\end{align}
All points $w_N+v\ell_N(\zeta+u)$ appearing in the integrand stay at a fixed
positive fraction of $w_N$ away from the poles $\xi_j$.
Indeed, since $|\zeta|\le3T_N^*/2$ and $u\in[0,d]$,
\[
 \ell_N(|\zeta|+u)
 \le \frac34w_N+d\ell_N.
\]
Consequently, uniformly for $v\in[0,1]$ and $1\le j\le N$,
\[
 \bigl|w_N+v\ell_N(\zeta+u)-\xi_j\bigr|
 \ge \frac14w_N-d\ell_N-\tau_N
 \ge c w_N,
\]
because $\ell_N/w_N=N^{-1/3}(1+o(1))$ and $\tau_N/w_N\to0$. 
Consequently, \eqref{eq:fhigher} gives
$|f_N^{(3)}(z)|\le CN/w_N^3$ uniformly on $\cG_N$.

We now estimate the two terms on the right-hand side of
\eqref{eq:app-scaled-derivative-bound-start} separately.  For the linear term,
\Cref{lem:cubic-expansion} (more precisely \eqref{eq:quadratic-scaled-bound})
gives $|\ell_N^2 f_N''(w_N)|\le CN^{-c}$ for some $c>0$.  For the quadratic
term, we use the uniform bound on $f_N^{(3)}$ and the relation
$N\ell_N^3/w_N^3=1+o(1)$ to obtain
\[
 \bigl|\ell_N^3(\zeta+u)^2 f_N^{(3)}(\cdot)\bigr|
 \le C|\zeta+u|^2.
\]
Combining these bounds yields the uniform estimate
\begin{equation}\label{eq:app-scaled-derivative-bound}
 \bigl|\ell_N f_N'(w_N+\ell_N(\zeta+u))\bigr|
 \le C\bigl(N^{-c}|\zeta|+1+|\zeta|^2\bigr).
\end{equation}

We integrate the derivative \eqref{eq:app-scaled-derivative-bound-start} along
the horizontal segment from $r_0(t)$ to $r(t)=r_0(t)+d$.  Since the length of
this segment is the fixed constant $d$, we obtain
\begin{equation}\label{eq:app-horizontal-change-meso}
 \bigl|\Re\Phi_N(r(t))-\Re\Phi_N(r_0(t))\bigr|
 \le C\bigl(N^{-c}t+1+t^2\bigr)
 \le C'(1+t^2).
\end{equation}
Here we used $|\zeta|\le|r_0(t)|+|u|\le Ct$ on the segment.

By the unshifted decay estimate \eqref{eq:unshifted-local-decay} from
\Cref{lem:global-decay}, we have $\Re\Phi_N(r_0(t))\le-ct^3$.  Therefore
\[
 \Re\Phi_N(r(t))
 \le -ct^3 + C'(1+t^2).
\]
Because $t\ge N^{\eta}$ and $\eta>0$, the cubic term dominates the quadratic
error for all sufficiently large $N$: indeed,
\[
 \frac{t^3}{1+t^2}\ge\frac{N^{3\eta}}{1+N^{2\eta}}\to\infty,
\]
so $-ct^3+C'(1+t^2)\le -(c/2)t^3$ for large $N$.  This establishes
\eqref{eq:shifted-r-local} on the mesoscopic range.  The same argument,
applied to the left rays and using \eqref{eq:gminus-prime}, yields the
analogous bound \eqref{eq:shifted-s-local} on $\Sigma$.

\medskip
\noindent{{\it Step~3. The far range: $t\ge T_N^*$}.}

Beyond the cubic-decay scale $T_N^*=w_N/(2\ell_N)\asymp N^{1/3}$, the
unshifted phase has already switched to linear decay.  Write $t=T_N^*+h$ with
$h\ge0$.  From \eqref{eq:unshifted-far-decay} we inherit the unshifted bound
\begin{equation}\label{eq:app-scaled-unshifted-far}
 \Re\Phi_N(r_0(t))
 \le-cN-cN^{2/3}h.
\end{equation}
We must again control the horizontal change from $r_0(t)$ to $r(t)$.  On the
segment joining these two points, a direct estimate from the definition
\eqref{eq:phase} of $f_N$ gives
\begin{equation}\label{eq:app-scaled-derivative-far}
 \bigl|\ell_N f_N'(w_N+\ell_N(\zeta+u))\bigr|
 \le C\bigl(N^{2/3}+N^{1/3}t\bigr).
\end{equation}
Indeed, the quadratic part $(w-v_N)/S_N$ contributes at most
$C(N^{2/3}+N^{1/3}t)$ because $v_N=O(w_N)$ uniformly on $\cG_N$
by \eqref{eq:vN-bounds} and
$|w|\le w_N+\ell_N(|r_0(t)|+d)\le C(w_N+\ell_N t)$.
For the logarithmic part, the needed denominator bound follows
directly from the contour geometry.  On the right horizontal segments,
$\Re w\ge w_N$, so $|w-\xi_j|\ge w_N-\tau_N\ge c w_N$.  On the left
horizontal segments, writing $q=\ell_Nt\ge w_N/2$, one has
$|\Im w|=q$ and hence $|w-\xi_j|\ge q\ge w_N/2$.  Therefore, on every far
horizontal segment,
\[
 \ell_N\left|\sum_{j=1}^N\frac1{w-\xi_j}\right|
 \le C\frac{\ell_NN}{w_N}
 =CN^{2/3}(1+o(1)).
\]

Integrating \eqref{eq:app-scaled-derivative-far} over the horizontal segment of
fixed length $d$ changes the real part by at most
\[
 C\bigl(N^{2/3}+N^{1/3}T_N^*+N^{1/3}h\bigr)
 \le C'N^{2/3}+C'N^{1/3}h.
\]
The first term $C'N^{2/3}$ is absorbed by the $-cN$ term in
\eqref{eq:app-scaled-unshifted-far} (since $N^{2/3}=o(N)$).  The second term
$C'N^{1/3}h$ is absorbed by $-cN^{2/3}h$ after decreasing the constant $c>0$,
because, with $t$ here denoting the scaled contour parameter, the
linear term in \eqref{eq:unshifted-far-decay} becomes
\[
 -c\frac{N}{w_N}\left(\ell_Nt-\frac{w_N}{2}\right)
 =-c\frac{N\ell_N}{w_N}(t-T_N^*)
 =-cN^{2/3}(1+o(1))(t-T_N^*),
\]
whose coefficient dominates any fixed multiple of $N^{1/3}$.  Hence, for all
sufficiently large $N$,
\[
 \Re\Phi_N(r(t))
 \le -\frac{c}{2}N - \frac{c}{2}N^{2/3}h,
\]
which proves \eqref{eq:shifted-r-far}.  The estimates
\eqref{eq:shifted-s-far} on $\Sigma$ follow by the same argument applied to
the left rays, using \eqref{eq:gminus-prime} in place of
\eqref{eq:gplus-prime}.

\medskip
\noindent{{\it Step~4. Uniform tail integrals}.}

It remains to prove the exponentially small bounds
\eqref{eq:r-tail-integral}--\eqref{eq:s-tail-integral}.  We discuss only
\eqref{eq:r-tail-integral}; the other integral is analogous.

First observe that the factor $e^{-\rho(d+t)}$ appearing in the integrand is
bounded by $e^{-d\rho}e^{-\rho t}\le e^{-d\rho}$, so $e^{-d\rho}$ may be
extracted from the integral.  The remaining $\rho$-dependent contribution is
nonpositive and therefore only improves the estimate.

We now split the domain of integration at $T_N^*$.

\smallskip
\noindent{{\it The mesoscopic tail: $N^{\eta}\le t\le T_N^*$}.} \ On this range, \eqref{eq:shifted-r-local} gives $\Re\Phi_N(r(t))\le C-ct^3$.
With a spatial variable $q\in[0,L]$ contributing $q(d+t)$ in the exponent,
the integrand is bounded by
\[
 (1+t^m)\exp\{-ct^3+q(d+t)\}.
\]
Because $L^3\le c\log N$, we have $L=o(N^{2\eta})$. Note that,  for $t\ge N^\eta$,
\[
\frac{Lt}{t^3}\le \frac{L}{N^{2\eta}}=o(1) \quad  \text{and} \quad \frac{Ld}{t^3}\le \frac{Ld}{N^{3\eta}}=o(1),
\]
consequently, uniformly
for $0\le q\le L$ and $t\ge N^{\eta}$,
\[
 q(d+t)\le L(d+t)\le \frac{c}{2}t^3,
 \qquad\text{for all large }N.
\]
The integrand is therefore bounded by
$(1+t^m)e^{-(c/2)t^3}$, and its integral over $[N^{\eta},T_N^*]$ is dominated
by $C_m e^{-c'N^{3\eta}}$ for a suitable $c'>0$.

\smallskip
\noindent{{\it The far tail: $t\ge T_N^*$}.} \ Write $t=T_N^*+h$ with $h\ge0$.  By \eqref{eq:shifted-r-far}, the exponent is
bounded by
\[
 -cN-cN^{2/3}h+q(d+t)-\rho(d+t).
\]
Since $q\le L$ and $L^3\le c\log N$, we have $L=o(N)$ and $L=o(N^{2/3})$.
Hence the positive spatial contribution $q(d+t)\le L(d+T_N^*+h)$ satisfies
\[
 LT_N^*=o(N)
 \quad\text{and}\quad
 Lh=o(N^{2/3}h).
\]
Both terms are therefore absorbed by the corresponding negative terms
$-cN$ and $-cN^{2/3}h$ for all large $N$.  The resulting integral over
$h\in[0,\infty)$ contributes at most $C_m e^{-cN}$, which is negligible
compared with $e^{-cN^{3\eta}}$.

\smallskip
Combining the two ranges and extracting the prefactor $e^{-d\rho}$ yields
\eqref{eq:r-tail-integral}.  The Airy counterparts (with $\Phi_N$ replaced by
$\zeta^3/3$) are simpler because the cubic decay $-ct^3$ holds for all
$t\ge0$, so the same argument gives an even stronger bound.  Finally, if a
nonnegative spatial variable $x$ or $y$ is present, it contributes an
additional factor $e^{-dx}$ or $e^{-dy}$ that can be extracted immediately,
while the term $q(d+t)$ is omitted because $x,y\ge0$ no longer produces a
positive exponential.  This completes the proof.
\end{proof}


\section*{Acknowledgements}


The authors are grateful to Lu Zhang for helpful comments on an earlier version of this paper.


\end{document}